\newtheorem{thm}{Theorem}[section]
\newtheorem{lem}[thm]{Lemma}
\newtheorem{prop}[thm]{Proposition}
\newtheorem{ques}[thm]{Question}
\newtheorem{cor}[thm]{Corollary}
\newtheorem{conj}[thm]{Conjecture}
\theoremstyle{definition}
\newtheorem{de}[thm]{Definition}
\theoremstyle{remark}
\newtheorem{rem}[thm]{Remark}
\numberwithin{equation}{section}
\def \N {\mathbb N}
\def \C {\mathbb C}
\def \Z {\mathbb Z}
\def \T {\mathbb{T}}
\def \F {\mathcal{F}}
\def \G {\mathcal{G}}
\def \I {\mathcal{I}}
\def \U {\mathcal{U}}
\def \X {\mathcal{X}}
\def \O {\mathcal{O}}
\def \Q {{\bf Q}}
\def \RP {{\bf RP}}
\def \id {{\rm id}}
\def \a {\alpha }
\def \ep {\epsilon}
\def \d {\delta}
\def \D {\Delta}
\def \ll {\lambda}
\def \lra{\longrightarrow}
\begin{document}

\title{Higher order almost automorphy, recurrence sets and the regionally proximal relation}

\author{Wen Huang}
\author{Song Shao}
\author{Xiangdong Ye}

\address{Department of Mathematics, University of Science and Technology of China,
Hefei, Anhui, 230026, P.R. China.}

\email{wenh@mail.ustc.edu.cn}\email{songshao@ustc.edu.cn}
\email{yexd@ustc.edu.cn}

\subjclass[2000]{Primary: 37B05, 37B20} \keywords{almost automorphy,
nilsystem, Poincar\'e recurrence set, Birkhoff recurrence set, the
regionally proximal relation of order $d$}

\thanks{Huang is supported by  NNSF of China (10911120388), Fok Ying Tung Education
Foundation and the Fundamental Research Funds for the Central
Universities, Shao is supported by NNSF of China (10871186) and
Program for New Century Excellent Talents in University, and
Huang+Ye are supported by NNSF of China (11071231).}

\date{Oct. 12, 2011}
\begin{abstract}
In this paper, $d$-step almost automorphic systems are studied for
$d\in\N$, which are the generalization of the classical almost
automorphic ones.

For a minimal topological dynamical system $(X,T)$ it is shown that
the condition $x\in X$ is $d$-step almost automorphic can be
characterized via various subsets of $\Z$ including the dual sets of
$d$-step Poincar\'e and Birkhoff recurrence sets, and Nil$_d$
Bohr$_0$-sets by considering $N(x,V)=\{n\in\Z: T^nx\in V\}$, where
$V$ is an arbitrary neighborhood of $x$. Moreover, it turns out that
the condition $(x,y)\in X\times X$ is regionally proximal of order
$d$ can also be characterized via various subsets of $\Z$ including
$d$-step Poincar\'e and Birkhoff recurrence sets, $SG_d$ sets, the
dual sets of Nil$_d$ Bohr$_0$-sets, and others by considering
$N(x,U)=\{n\in\Z: T^nx\in U\}$, where $U$ is an arbitrary
neighborhood of $y$.

\end{abstract}

\maketitle

\markboth{Higher order almost automorphy etc.}{W. Huang, S. Shao and
X.D. Ye}


\section{Introduction}

In the past few years, it has become apparent both in ergodic theory
and additive combinatorics that nilpotent groups and a higher order
Fourier analysis play an important role. In this paper we will apply
results obtained by the same authors in \cite{HSY} to study higher
order automorphic systems, namely $d$-step almost automorphic
systems which by the definition are the almost one-to-one extensions
of their maximal $d$-step nilfactors. Since for a minimal system the
maximal $d$-step nilfactor is induced by the regionally proximal
relation of order $d$ (which is a closed invariant equivalence
relation \cite{HKM,SY}), the natural way we study $d$-step almost
automorphic systems is that we first get some characterizations of
regionally proximal relation of order $d$, and then obtain results
for $d$-step almost automorphic systems. In the process doing above
many interesting subsets of $\Z$ including higher order Poincar\'e
and Birkhoff recurrence sets (usual and cubic versions), higher
order Bohr sets, $SG_d$ sets (introduced in \cite{HK10}) and others
are involved. In this section we introduce the background and state
the main results of the paper.

\subsection{Background} First we give some background.

\subsubsection{Almost periodicity and almost automorphy}

The study of (uniformly) almost periodic functions was initiated by
Bohr in a series of three papers 1924-26 in \cite{Bohr}. The
literature on almost periodic functions is enormous, and the notion
has been generalized in several directions. Nowadays the theory of
almost periodic functions may be recognized as the representation
theory of compact Hausdorff groups: every topological group $G$ has
a group compactification $\a_G: G\rightarrow b G$ such that the
space of almost periodic functions on $G$ is just the set of all
functions $f\circ \a_G$ with $f\in C(b G)$. The compactification
$(\a_G, b G)$ of $G$ is called the {\em Bohr compactification} of
$G$.

Related to the almost periodic functions are the {\em almost
automorphic functions}: these functions turn out to be the ones of
the form $h\circ \a_G$ with $h$ a bounded continuous function on
$\a_G(G)$ ( if $h$ is uniformly continuous and bounded on $\a_G(G)$,
then it extends to an $f\in C(bG)$, so $h\circ \a_G=f\circ \a_G$ is
almost periodic on $G$).

\medskip

The notion of almost automorphy was first introduced by Bochner in
1955 in a work of differential geometry \cite{Bochner55, Bochner62}.
Taking $G$ for the present to be the group of integers $\Z$ and an
almost automorphic function $f$ has the property that from any
sequence $\{n_i'\}\subseteq \Z$ one may extract a subsequence
$\{n_i\}$ such that both
\begin{equation*}
    \lim_{i\to \infty} f(t+n_i)=g(t)\quad \text{and }\quad \lim_{i\to \infty}
    g(t-n_i)=f(t)
\end{equation*}
hold for each $t\in \Z$ and some function $g$, not necessarily
uniformly. Bochner  \cite{Bochner62} has observed that almost
periodic functions are almost automorphic, but the converse is not
true. Veech \cite{V65} showed that the almost automorphic functions
can be characterized in terms of the almost periodic ones, and vice
versa. In the same paper, Veech considered the system associated
with an almost automorphic function, and introduced the notion of
{\em almost automorphic point} ({\em AA point}, for short) in
topological dynamical systems (t.d.s. for short).
For a t.d.s. $(X, T)$, a point $x\in X$ is said to be {\em almost
automorphic} if from any sequence $\{n_i'\}\subseteq \Z$ one may
extract a subsequence $\{n_i\}$ such that
$$\lim_{j\to \infty}\lim_{i\to \infty} T^{n_i-n_j}x=x.$$
Moreover, Veech  \cite{V65, V68} gave the structure theorem for
minimal systems with an almost automorphic point: each minimal
almost automorphic system is an almost one-to-one extension of its
maximal equicontinuous factor.

Note that in \cite{V65} all works were done for general groups. The
notion of almost automorphy is very useful in the study of
differential equations, and  see \cite{ShenYi} and references there
for more information on this topic.

\subsubsection{The equicontinuous structure relation $S_{eq}$, almost
automorphy and Bohr$_0$ sets}

For a t.d.s. $(X,T)$, it was proved in \cite{EG} that there exists
on $X$ a closed $T$-invariant equivalence relation, $S_{eq}$, such
that $(X/S_{eq}, T)$ is an equicontinuous system. $S_{eq}$ is called
the {\em equicontinuous structure relation}. It was also showed in
\cite{EG} that $S_{eq}$ is the smallest closed $T$-invariant
equivalence relation containing the regionally proximal relation
$\RP=\RP(X)$ (recall that $(x,y)\in \RP$ if there are sequences
$x_i,y_i\in X, n_i\in \Z$ such that $x_i\to x, y_i\to y$ and
$(T\times T)^{n_i}(x_i,y_i)\to (z,z)$, $i\to \infty$, for some $z\in
X$). A natural question was whether $S_{eq}=\RP(X)$ for all minimal
t.d.s.? Veech \cite{V68} gave the first positive answer to this
question, i.e. he proved that $S_{eq}=\RP(X)$ for all minimal t.d.s.
under abelian group actions. As a matter of fact, Veech proved that
for a minimal t.d.s. $(x,y)\in S_{eq}$ if and only if there is a
sequence $\{n_i\}\subset \Z$ and $z\in X$ such that
\begin{equation*}
    T^{n_i}x\lra z \quad \text{and}\quad T^{-n_i}z\lra y, \ i\to \infty.
\end{equation*}
As a direct corollary, for a minimal t.d.s. $(X,T)$, {\em a point
$x\in X$ is almost automorphic if and only if }$$\RP[x]=\{y\in X:
(x,y)\in \RP\}=\{x\}.$$

Also from Veech's approach, it is easy to show that for a minimal
t.d.s. $(X,T)$, $(x,y)\in \RP$ if and only if for each neighborhood
$U$ of $y$, $N(x,U)=\{n\in\Z: T^nx\in U\}$ contains some
$\D$-set\footnote{A $\D$-set is a set of differences $A-A=\{a-b:
a,b\in A\}$ for some infinite subset $A\subset \Z$; and a $\D^*$-set
is a set that has nontrivial intersection with the set of $A-A$ for
any infinite set $A$. }. Hence it is not difficult to get another
equivalent condition for an almost automorphic point \cite{F}: {\em
a point $x\in X$ is almost automorphic if and only if it is
$\D^*$-recurrent.}\footnote{ Let $\F$ be a collection of subsets of
$\Z$ and let $(X,T)$ be a t.d.s.. A point $x$ of $X$ is called {\em
$\F$-recurrent} if $N(x,U)\in \F$ for every neighborhood $U$ of
$x$.}

\medskip

Recall a subset $A\subseteq \Z$ is a {\em Bohr$_0$ set} if there
exists an equicontinuous system $(X,T)$, a point $x_0\in X$ and its
open neighborhood $U$ such that $N(x_0,U)=\{n\in \Z: T^nx_0\in U\}$
is contained in $A$.\footnote{There are lots of equivalent
definitions for Bohr set. For example, one may define Bohr sets as
follows: A subset $A\subseteq \Z$ is a {\em Bohr set} if there exist
$m\in \N$, $\a\in \T^m$, and an open set $U\subseteq \T^m$ such that
$    \{n\in \Z: n\a \in U\}$ is contained in $A$; the set $A$ is a
{\em Bohr$_0$ set} if additionally $0\in U$. See \cite{BFW, Kaz} for
more details.} Since every point in an equicontinuous system is
almost automorphic, it follows that each Bohr$_0$ set is a
$\D^*$-set. The converse does not hold \cite{BFW}. But a $\D^*$-set
is not too far from being a Bohr$_0$-set. It is shown by Host and
Kra recently that each $\D^*$-set is a piecewise Bohr$_0$-set,
meaning that it agrees with a Bohr$_0$-set on a sequence of
intervals whose lengths tend to infinity \cite{HK10}.

\subsubsection{Poincar\'e recurrence sets and almost automorphy}

The Birkhorff recurrence theorem states that each t.d.s. has a
recurrent point which implies that whenever $(X, T)$ is a minimal
t.d.s. and $U\subseteq X$ a nonempty open set, then $N(U,U)\neq
\emptyset$. The measurable version of this phenomenon is the famous
Poincar\'e's Recurrence Theorem: Let $(X,\X,\mu,T)$ be a measure
preserving system and $A\in \X$ with $\mu(A)>0$, then
$N_\mu(A,A)=\{n\in \Z: \mu(A\cap T^{-n}A)>0\}$ is infinite.

\medskip
In \cite{F, F81} Furstenberg introduced the notion of Poincar\'e and
Birkhoff recurrence sets. A subset $P$ of $\Z$ is called a {\em
Poincar\'e recurrence set} if whenever $(X,\X,\mu,T)$ is a measure
preserving system and $A\in \X$ has positive measure, then $P\cap
N_\mu(A,A)\neq \emptyset $. Similarly, a subset $P\subset \Z$ is
called a {\em Birkhoff recurrence set} if whenever $(X, T)$ is a
minimal t.d.s. and $U\subseteq X$ a nonempty open set, then $P\cap
N(U,U)\neq \emptyset$. Let $\F_{Poi}$ and $\F_{Bir}$ denote the
collections of Poincar\'e  and Birkhoff recurrence sets of $\Z$
respectively.

\medskip

In \cite{HLY}, it was shown for a minimal t.d.s. $(x,y)\in \RP$ if
and only if for each neighborhood $U$ of $y$, $N(x,U)\in \F_{Poi}$.
We will show that one can use $\F_{Poi}$ to get another equivalent
condition for an almost automorphic point: {\em a point $x\in X$ is
almost automorphic if and only if it is $\F_{Poi}^*$-recurrent,}
where $\F_{Poi}^*$ is the collection of subsets of $\Z$ intersecting
all sets from $\F_{Poi}$. One has similar results for Birkhoff
recurrence sets.



\subsubsection{Multiple ergodic averages and factors}

It is stated by Von Neumann and Birkhoff ergodic theorems that
ergodic average $ \frac 1N \sum_{n=0}^{N-1} f(T^n x)$ converges in
$L^2$ and pointwisely respectively. The study of the multiple
ergodic averages
\begin{equation*}
    \frac 1 N\sum_{n=0}^{N-1}f_1(T^nx)\ldots f_d(T^{dn}x)
\end{equation*}
begins from the Furstenberg's beautiful proof of Szemer\'edi's
theorem via ergodic theory \cite{F77} in the 1970's. After nearly 30
years' efforts of many researchers, this problem of $L^2$ case was
finally solved by Host and Kra in \cite{HK05} (see also Ziegler
\cite{Z}). In their proofs the theory of nilfactors plays a great
role. The structure theorem of \cite{HK05, Z} states that if one
wants to understand the multiple ergodic averages
$$ \frac 1 N\sum_{n=0}^{N-1}f_1(T^nx)\ldots f_d(T^{dn}x) ,$$
one can replace each function $f_i$ by its conditional expectation
on its $d-1$-step nilfactor (a $1$-step nilfactor is the Kroneker's
one). Thus one can reduce the problem to the study of the same
average in a nilsystem.

The study of the topological correspondence of the nilfactors has a
long history. It goes back to the study of the equicontinuous
structure relation $S_{eq}(X)$ of a t.d.s. $(X, T)$ in the 1960's,
and more recently Glasner's work \cite{G93, G94} etc.. It turns out
the notion of the regionally proximal relation of order $d$ defined
in \cite{HM,HKM} plays an important role.

\begin{de}
Let $(X, T)$ be a t.d.s.  and let $d\ge 1$ be an integer. A pair
$(x, y) \in X\times X$ is said to be {\em regionally proximal of
order $d$} if for any $\d  > 0$, there exist $x', y'\in X$ and a
vector ${\bf n} = (n_1,\ldots , n_d)\in\Z^d$ such that $\rho(x, x')
< \d, \rho(y, y') <\d$, and $$ \rho(T^{{\bf n}\cdot \ep}x', T^{{\bf
n}\cdot \ep}y') < \d\ \text{for any $\ep\in \{0,1\}^d$,
$\ep\not=(0,\ldots,0)$},
$$ where ${\bf n}\cdot \ep = \sum_{i=1}^d \ep_in_i$. The set of
regionally proximal pairs of order $d$ is denoted by $\RP^{[d]}(X)$,
which is called {\em the regionally proximal relation of order $d$}.
\end{de}
It is easy to see that $\RP^{[d]}(X)$ is a closed and invariant
relation for all $d\in \N$. When $d=1$, $\RP^{[d]}(X)$ is nothing
but the classical regionally proximal relation. In \cite{HKM}, for a
minimal distal t.d.s. the authors showed that $\RP^{[d]}(X)$ is a
closed invariant equivalence relation, and the quotient of $X$ under
this relation is its maximal $d$-step nilfactor. These results were
proved to be true for general minimal t.d.s. \cite{SY}.

\subsubsection{Nilsystems and nilsequences}

Furstenberg's proof of Szemer\'edi's
theorem via ergodic theory paved the way for new combinatorial
results via ergodic methods, as well as leading to numerous
developments within ergodic theory. More recently, the interaction
between the fields has taken a new dimension, with ergodic objects
being imported into the finite combinatorial setting. Some objects
at the center of this interchange are nilsequences and the
nilsystems on which they are defined (see, for example,
\cite{BHK05,GT08, GT, GT10, HK05, HK09, HK10, HKM}).

\medskip
Nilsequences are defined by evaluating a function along the orbit of
a point in the homogeneous space of a nilpotent Lie group. We recall
the definition of a nilsequence. A {\em basic $d$-step nilsequence}
is a sequence of the form $\{f (T^ nx): n \in  \Z\}$, where $d\in
\N$ and $(X, T )$ is a basic $d$-step nilsystem, $f :X\rightarrow
\C$ is a continuous function, and $x\in X$. A {\em $d$-step
nilsequence} is a uniform limit of basic $d$-step nilsequences.

\medskip

\medskip

One can define a generalization of a Bohr$_0$ set \cite{HK10}:

\begin{de}
A subset $A\subseteq \Z$ is a {\em Nil$_d$ Bohr$_0$-set} of there
exist a $d$-step nilsystem $(X,T)$, $x_0\in X$ and an open set
$U\subseteq X$ containing $x_0$ such that
\begin{equation*}
    \{n\in \Z: T^n x_0\in U\}
\end{equation*}
is contained in $A$.
\end{de}

Denote by $\F_{Bohr_0}$ and $\F_{d,0}$ the family generated by all
Bohr$_0$-sets and Nil$_d$ Bohr$_0$-sets respectively. Note that
$\F_{Bohr_0}=\F_{1,0}$.

\subsubsection{$d$-step almost automorphy}

Similar to the definition of almost automorphy, now we give the
definition of $d$-step almost automorphy for all $d\in \N$:

\begin{de}
Let $(X,T)$ be a minimal t.d.s. and $x\in X$, $d\in \N$. $x$ is
called {\em $d$-step almost automorphic} (or $d$-step AA for short)
if $\RP^{[d]}[x]=\{x\}$. A minimal t.d.s.  is called {\em $d$-step
almost automorphic} if it has a $d$-step almost automorphic point.
\end{de}

Since $\RP^{[d]}$ is an equivalence relation for minimal t.d.s.
\cite{SY}, by definition it follows that

\begin{prop}
Let $(X,T)$ be a minimal t.d.s.. Then $(X,T)$ is a $d$-step almost
automorphic system for some $d\in \N$ if and only if it is an almost
one-to-one extension of its maximal $d$-step nilfactor.
\end{prop}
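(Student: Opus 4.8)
The plan is to prove the equivalence in Proposition by unwinding the definition of $d$-step almost automorphy together with the structural fact, proved in \cite{HKM,SY}, that for a minimal t.d.s. $(X,T)$ the relation $\RP^{[d]}=\RP^{[d]}(X)$ is a closed invariant equivalence relation whose quotient $\pi\colon X\to X/\RP^{[d]}=:X_d$ is the maximal $d$-step nilfactor of $(X,T)$. Thus $X_d$ is the universal factor of $(X,T)$ that is a ($d$-step) nilsystem (in the inverse-limit sense), and the content of the Proposition is just the translation of the point condition $\RP^{[d]}[x]=\{x\}$ into the assertion that the canonical factor map onto $X_d$ is almost one-to-one, i.e.\ has a singleton fibre over some point.

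First I would fix the factor map $\pi\colon (X,T)\to (X_d,T)$ obtained from $\RP^{[d]}$, so that by construction $\pi(x)=\pi(y)$ if and only if $(x,y)\in\RP^{[d]}$, equivalently $\pi^{-1}(\pi(x))=\RP^{[d]}[x]$ for every $x\in X$. For the forward direction, suppose $x\in X$ is $d$-step AA, i.e.\ $\RP^{[d]}[x]=\{x\}$; then $\pi^{-1}(\pi(x))=\{x\}$, so $\pi$ is injective over the point $\pi(x)$, which is exactly the statement that $(X,T)$ is an almost one-to-one extension of $(X_d,T)$ (a factor map between minimal systems is almost one-to-one precisely when at least one fibre is a singleton). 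Since $(X_d,T)$ is the maximal $d$-step nilfactor, this gives one implication. Conversely, if $(X,T)$ is an almost one-to-one extension of its maximal $d$-step nilfactor, then the factor map realizing this must, by maximality and uniqueness of the maximal $d$-step nilfactor, coincide (up to isomorphism) with $\pi$; hence $\pi$ has a singleton fibre, say $\pi^{-1}(z_0)=\{x_0\}$, and then $\RP^{[d]}[x_0]=\pi^{-1}(\pi(x_0))=\{x_0\}$, so $x_0$ is a $d$-step AA point and $(X,T)$ is $d$-step almost automorphic by definition.

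The only genuinely non-routine point is the identification of the ``factor map making $(X,T)$ an almost one-to-one extension of its maximal $d$-step nilfactor'' with the canonical quotient map by $\RP^{[d]}$; this is where I would invoke, rather than reprove, the results of \cite{HKM,SY} that $\RP^{[d]}$ is an equivalence relation for minimal systems and that $X/\RP^{[d]}$ \emph{is} the maximal $d$-step nilfactor. Given that, the maximal $d$-step nilfactor is unique up to isomorphism of factors, so any factor map onto it is conjugate to $\pi$, and the argument above applies verbatim. The remaining ingredients---that almost one-to-oneness of a factor map between minimal systems is equivalent to the existence of a single injectivity fibre, and that $\pi$-fibres are exactly the classes $\RP^{[d]}[\cdot]$---are standard and immediate from the definitions, so the proof is short and the main obstacle is purely bookkeeping: being careful that ``$d$-step almost automorphic system'' is defined via existence of one $d$-step AA point, matching the existence of one singleton fibre.
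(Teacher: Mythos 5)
Your argument is correct and is essentially the paper's own (the paper simply states that the Proposition "follows by definition" from the fact, quoted from \cite{HKM,SY}, that $\RP^{[d]}$ is a closed invariant equivalence relation whose quotient is the maximal $d$-step nilfactor). Identifying the fibres of the canonical quotient map with the classes $\RP^{[d]}[\cdot]$ and translating "some fibre is a singleton" into "some point is $d$-step AA" is exactly the intended reasoning, so there is nothing to add.
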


\subsubsection{Higher order recurrence sets}

In this paper, we will use recurrence sets to characterize $d$-step
almost automorphy. First we need to generalize the recurrence sets
to a higher order version.

\medskip

Before doing this we state the multiple Poincar\'e and Birkhoof
recurrence theorems, see \cite{F}

\noindent $\bullet$  Let $(X,\X,\mu,T)$ be a measure preserving
system and $d\in\N$. Then for any $A\in \X$ with $\mu(A)>0$ there is
$n\in\Z\setminus \{0\}$ such that $\mu(A\cap T^{-n}A\cap \ldots\cap
T^{-dn}A)>0$.

\noindent $\bullet$ Let $(X,T)$ be a t.d.s. and $d>0$. Then there
are $x\in X$ and a subsequence $\{n_i\}$ with $n_i\lra +\infty$ such
that $\lim_{i\lra+\infty}T^{jn_i}x=x$ for each $1\le j\le d$.

The facts enable us to get generalizations of Poincar\'e and
Birkhoff recurrence subsets (see \cite{FLW}).

\begin{de} Let $d\in \N$.
\begin{enumerate}
\item We say that $S \subset \Z$ is a set of {\em $d$-recurrence } if
for every measure preserving system $(X,\X,\mu,T)$ and for every
$A\in \X$ with $\mu (A)
> 0$, there exists $n \in S$  such that
$$\mu(A\cap T^{-n}A\cap \ldots \cap T^{-dn}A)>0.$$


\item We say that $S\subset \Z$ is a set of {\em $d$-topological
recurrence} if for every minimal t.d.s. $(X, T)$ and for every
nonempty open subset $U$ of $X$, there exists $n\in S$ such that
$$U\cap T^{-n}U\cap \ldots \cap T^{-dn}U\neq \emptyset.$$
\end{enumerate}
\end{de}

Let $\F_{Poi_d}$ (resp. $\F_{Bir_d}$) be the family generated by the
collection of all sets of $d$-recurrence (resp. sets of
$d$-topological recurrence). It is obvious by definitions that
$\F_{Poi_d}\subset \F_{Bir_d}$. It is showed in \cite{HSY} that
these sets are contained in the dual family of Nil$_d$-Bohr$_0$
sets.

\begin{prop}\cite{HSY}
Let $d\in\N$. Then
$$\F_{Poi_d}\subset\F_{Bir_d}\subset \F^*_{d,0},$$ where
$\F_{d,0}^*$ is the collection of subsets of $\Z$ intersecting all
Nil$_d$ Bohr$_0$ sets.
\end{prop}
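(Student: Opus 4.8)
The plan is to prove the two inclusions separately; the first is routine, the second carries all the content. For $\F_{Poi_d}\subseteq\F_{Bir_d}$ I would argue that every set $S$ of $d$-recurrence is a set of $d$-topological recurrence (this suffices, since membership in $\F_{Poi_d}$, resp. $\F_{Bir_d}$, means containing a set of $d$-recurrence, resp. of $d$-topological recurrence). Given a minimal t.d.s. $(Y,T)$ and a nonempty open $V\subseteq Y$, pick an invariant Borel probability measure $\mu$ (Krylov--Bogolyubov); by minimality $\mathrm{supp}(\mu)=Y$, so $\mu(V)>0$, and the defining property of $S$ gives $n\in S$ with $\mu(V\cap T^{-n}V\cap\dots\cap T^{-dn}V)>0$, hence in particular $V\cap T^{-n}V\cap\dots\cap T^{-dn}V\neq\emptyset$.

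For $\F_{Bir_d}\subseteq\F_{d,0}^*$ it is enough to show that a set $S$ of $d$-topological recurrence meets every Nil$_d$ Bohr$_0$-set $A$, where by definition $A\supseteq N(x_0,U)=\{n:T^nx_0\in U\}$ for some $d$-step nilsystem $(X,T)$, a point $x_0$ and an open $U\ni x_0$. I would first replace $X$ by the orbit closure $\overline{\{T^nx_0:n\in\Z\}}$, which is a sub-nilmanifold on which $T$ acts minimally (a standard fact from the structure theory of nilsystems, cf. \cite{HKM}), and then, writing $X=G/\Gamma$, $Tx=gx$ and conjugating by a representative of $x_0$, assume $(X,T)$ is minimal and $x_0=e\Gamma$; neither step changes $N(x_0,U)$ up to a homeomorphism. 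It then suffices to construct a \emph{minimal} t.d.s. $(Z,R)$ and a nonempty open $W\subseteq Z$ with
\begin{equation*}
\{\,n\in\Z:\ W\cap R^{-n}W\cap\dots\cap R^{-dn}W\neq\emptyset\,\}\ \subseteq\ N(x_0,U),
\end{equation*}
because then, applying the definition of $S$ to $(Z,R)$ and $W$, we obtain some $n\in S$ lying in $N(x_0,U)\subseteq A$, i.e. $S\cap A\neq\emptyset$.

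To build $(Z,R)$ I would invoke the structure theory of nilsystems. In Mal'cev coordinates adapted to the lower central series of $G$, the orbit $n\mapsto g^n\Gamma$ is a polynomial sequence of degree $\le d$ through $x_0$ at $n=0$, and from this one reads off that $N(x_0,U)$ contains a finite intersection of ``polynomial return conditions'' each of the form $\{n:\|n^p\alpha\|<\delta\}$ with $\alpha$ in a torus $\T^k$ and $1\le p\le d$. I would realize each such condition by a unipotent affine block: on $\T^k\times\dots\times\T^k$ ($p$ copies) take $R_\alpha(t_1,\dots,t_p)=(t_1+\alpha,\,t_2+t_1,\dots,\,t_p+t_{p-1})$, so that the orbit of the base point $0$ is $R_\alpha^m 0=(\binom m1\alpha,\binom m2\alpha,\dots,\binom mp\alpha)$; then let $(Z,R)$ be the orbit closure of the base point $z_0$ in the product of all these blocks (again a minimal nilsystem) and $W=B(z_0,\epsilon)$. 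The point is that if $z\in Z$ and $z,R^nz,\dots,R^{dn}z\in W$, then in any block of degree $p\le d$ the last coordinate of $R^{mn}z$ is a polynomial in $m$ of degree $\le p$ all of whose values at $m=0,1,\dots,p$ lie within $\epsilon$ of $0$, and since only the summand $\binom{mn}p\alpha$ has degree $p$, the $p$-th finite difference of that polynomial at $0$ equals $n^p\alpha$ \emph{regardless of $z$}, so $\|n^p\alpha\|\le 2^p\epsilon$. Choosing $\epsilon$ small relative to the finitely many $\delta$'s then forces every polynomial return condition for $n$, whence $T^nx_0\in U$.

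The main obstacle is not any of the recurrence manipulations but the sentence ``$N(x_0,U)$ contains a finite intersection of conditions $\|n^p\alpha\|<\delta$'': in the non-abelian setting one must pass to $G$, use the vertical-torus filtration, and handle the twisted reduction modulo $\Gamma$, the connected-component structure of $G$, and divisibility issues in moving between the binomial and power bases. This structural input is exactly what is imported from \cite{HSY}; an alternative that avoids coordinates is to phrase $W$ and the $d$-fold return in terms of the dynamical cube groups of \cite{HKM} and the minimality of the cube structure of a $d$-step nilsystem. Either way, this is where the real work lies — it is precisely here that step-$d$ nilpotency is what makes $d$-fold topological recurrence tight enough to pin down the single return $T^nx_0\in U$.
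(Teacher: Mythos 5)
First, a point of comparison: the paper does not prove this proposition at all --- it is imported verbatim from \cite{HSY} (and restated later as Theorem \ref{HSY2011}, again without proof), so there is no in-paper argument to measure your sketch against; the real comparison is with what \cite{HSY} actually does. Your first inclusion $\F_{Poi_d}\subset\F_{Bir_d}$ is correct and standard (Krylov--Bogolyubov plus full support of invariant measures on a minimal system), and your finite-difference computation in the unipotent affine blocks is also correct as far as it goes: the $p$-th difference of the last coordinate of $R_\alpha^{mn}z$ in $m$ is $n^p\alpha$ independently of $z$, which does yield $\|n^p\alpha\|\le 2^p\epsilon$ from a $d$-fold return.

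The gap is the sentence you yourself flag: ``$N(x_0,U)$ contains a finite intersection of conditions $\|n^p\alpha\|<\delta$.'' For a genuinely non-abelian $d$-step nilsystem this is not a coordinate bookkeeping issue to be absorbed into Mal'cev coordinates --- it is false as stated. The coordinates of $g^n\Gamma$ are generalized (bracket) polynomials in $n$; already for the Heisenberg nilsystem the return condition to a neighbourhood of $e\Gamma$ involves an expression of the shape $\bigl\|n\gamma+\binom{n}{2}\alpha\beta-n\alpha\lfloor n\beta\rfloor\bigr\|<\delta$, and such sets do not in general contain any finite intersection of monomial Bohr sets $\{n:\|n^p\theta\|<\delta\}$. (If they did, Nil$_d$ Bohr$_0$-sets would coincide, up to finite intersections, with ordinary polynomial Bohr sets, and the generalized-polynomial machinery of \cite{HSY} and \cite{HK10} would be unnecessary.) Moreover, once the target is corrected to a generalized-polynomial condition, your affine unipotent blocks no longer realize it: the dynamical systems whose orbits produce bracket polynomials are again nilsystems, so the reduction becomes circular. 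What closes the argument in \cite{HSY} is precisely this structural theorem relating Nil$_d$ Bohr$_0$-sets to a special class of generalized polynomials --- or, in the coordinate-free alternative you mention in passing, the unique-closing property of $(d+1)$-dimensional dynamical cubes in a system of order $d$, which lets one run your finite-difference rigidity argument intrinsically on $X$ rather than on an abelian model. So your outline correctly isolates where the difficulty sits, but the specific bridge you propose across that point does not hold.
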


Note that $\F_{Poi}\neq \F_{Bir}$ \cite{K}. Though we can not prove
if $\F_{Bir_d} = \F^*_{d,0}$, we will show that we can not
distinguish them in the dynamical sense (Theorem \ref{intro-10}).

\begin{rem} The above definitions are slightly different from the
ones introduced in \cite{FLW}, namely we do not require $n\not=0$.
The main reason we define in this way is that for each $A\in
\F_{d,0}$, $0\in A$. Thus $\{0\}\cup C\in \F^*_{d,0}$ for each
$C\subset \Z$.
\end{rem}

\subsection{Main results} Now we are ready to state the main
results.

\subsubsection{Regionally proximal relation of order $d$ and $d$-step
almost automorpy}

The following theorem shows that we can use $\F_{Poi_d}$,
$\F_{Bir_d}$ and $\F_{d,0}^*$ to characterize regionally proximal
pairs of order $d$.

\begin{thm}\label{intro-10}
Let $(X,T)$ be a minimal t.d.s.. Then the following statements are
equivalent:

\begin{enumerate}

\item $(x,y)\in \RP^{[d]}$.

\item
$N(x,U)\in \F_{Poi_d}$ for each neighborhood $U$ of $y$.

\item $N(x,U)\in \F_{Bir_d}$ for each
neighborhood $U$ of $y$.

\item $N(x,U)\in \F_{d,0}^*$ for each
neighborhood $U$ of $y$.
\end{enumerate}
\end{thm}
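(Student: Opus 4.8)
The plan is to establish the cycle of implications $(4)\Rightarrow(1)\Rightarrow(2)\Rightarrow(3)\Rightarrow(4)$, since $(2)\Rightarrow(3)$ is immediate from $\F_{Poi_d}\subset\F_{Bir_d}$ and $(3)\Rightarrow(4)$ is immediate from Proposition~\ref{...} ($\F_{Bir_d}\subset\F_{d,0}^*$). So the real content lies in $(1)\Rightarrow(2)$ and $(4)\Rightarrow(1)$.

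For $(1)\Rightarrow(2)$: assume $(x,y)\in\RP^{[d]}$ and fix a neighborhood $U$ of $y$; we must show $N(x,U)$ is a set of $d$-recurrence, i.e. given any m.p.s. $(Z,\ZZ,\mu,S)$ and $B\in\ZZ$ with $\mu(B)>0$, we must find $n\in N(x,U)$ with $\mu(B\cap S^{-n}B\cap\dots\cap S^{-dn}B)>0$. The natural route is via the structure theory from \cite{HKM,SY}: working in $(X,T)$ and using the characterization of $\RP^{[d]}$ in terms of the $(d+1)$-fold cube system, a regionally proximal pair of order $d$ produces, for every $\d>0$, a point $x'$ near $x$, a point $y'$ near $y$, and ${\bf n}=(n_1,\dots,n_d)$ with $T^{{\bf n}\cdot\ep}x'$ close to $T^{{\bf n}\cdot\ep}y'$ for all nonzero $\ep\in\{0,1\}^d$; in particular taking $\ep$ to be each of the ``coordinate-diagonal'' vectors $(1,1,\dots,1),(1,1,\dots,1,0)$ type combinations one extracts a single integer $n$ (e.g. $n=n_1$ with the others chosen appropriately, or a suitable $n$ so that $n,2n,\dots,dn$ all appear among the ${\bf n}\cdot\ep$) for which $T^{jn}x$ is near $y$ for $j=1,\dots,d$. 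Combining this with a joining/Furstenberg-correspondence argument, or more directly by invoking that every $A\in\F_{Poi_d}$-type obstruction comes from a nilsystem together with Proposition~\ref{...} (HSY), one transfers the recurrence in the given m.p.s. into $N(x,U)$. An alternative and cleaner implementation: prove $(1)\Rightarrow(2)$ by contradiction — if $N(x,U)\notin\F_{Poi_d}$, then its complement (relative to a suitable cofinite set) contains a set that is ``non-$d$-recurrent'', which by the known structure of non-recurrence sets forces the existence of a $d$-step nilsystem factor separating $x$ and $y$, contradicting $(x,y)\in\RP^{[d]}$ since $\RP^{[d]}$ collapses in the maximal $d$-step nilfactor.

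For $(4)\Rightarrow(1)$: assume $N(x,U)\in\F_{d,0}^*$ for every neighborhood $U$ of $y$, and suppose for contradiction $(x,y)\notin\RP^{[d]}$. Since $\RP^{[d]}$ is a closed invariant equivalence relation and $(X/\RP^{[d]},T)$ is the maximal $d$-step nilfactor, the images $\bar x,\bar y$ of $x,y$ in this nilsystem are distinct, so there is an open set $U$ containing $y$ whose image $\bar U$ in the nilfactor misses $\bar x$; pulling back, $N(x,U)\subseteq N(\bar x,\bar U)$ and the latter is disjoint from a Nil$_d$ Bohr$_0$-set built around $\bar x$ (namely $N(\bar x, V)$ for a small symmetric neighborhood $V$ of $\bar x$ in the $d$-step nilsystem). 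This contradicts $N(x,U)\in\F_{d,0}^*$. The one technical point to handle carefully here is that a Nil$_d$ Bohr$_0$-set is defined via a point lying inside the open set, so I need $N(\bar x,V)$ with $\bar x\in V$ to be genuinely disjoint from $N(x,U)$ — which holds once $V$ and $U$ have disjoint preimages, achievable by continuity and the separation $\bar x\neq\bar y$.

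The main obstacle I anticipate is the implication $(1)\Rightarrow(2)$: going from the purely topological/combinatorial data of a regionally proximal pair of order $d$ to a genuine \emph{measure-theoretic} $d$-recurrence statement requires either a Furstenberg correspondence argument or a careful appeal to the nilsystem structure theory, and making the quantifiers line up (the m.p.s. $(Z,\ZZ,\mu,S)$ and set $B$ are arbitrary, while the $n$ we produce must lie in the fixed set $N(x,U)$) is where the work concentrates. The cleanest path is likely to reduce, via \cite{HSY} and \cite{SY}, to the statement that $\RP^{[d]}[x]=\{x\}$ is detected by nilsystems, and then use that sets coming from $d$-step nilsystems are exactly the relevant test sets for $\F_{Poi_d}$; I would organize the proof to lean on those cited structural results rather than re-deriving correspondence from scratch.
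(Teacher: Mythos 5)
Your reduction of the theorem to the two implications $(1)\Rightarrow(2)$ and $(4)\Rightarrow(1)$ is exactly right and matches the paper: $(2)\Rightarrow(3)$ is the trivial inclusion $\F_{Poi_d}\subset\F_{Bir_d}$, and $(3)\Rightarrow(4)$ is the quoted inclusion $\F_{Bir_d}\subset\F_{d,0}^*$ from \cite{HSY}. Your treatment of $(4)\Rightarrow(1)$ is essentially the paper's argument (the first half of the proof of Theorem \ref{huang10}) stated contrapositively; the only point you should make explicit is that the maximal $d$-step nilfactor is an inverse limit of nilsystems rather than a nilsystem, so to see that $N(\pi(x),V)$ genuinely contains a Nil$_d$ Bohr$_0$-set one passes to a finite stage of the inverse limit. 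That is routine.

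The genuine gap is $(1)\Rightarrow(2)$, and the first mechanism you propose for it fails. The cube characterization of $\RP^{[d]}$ produces integers $n_1,\dots,n_{d+1}$ whose \emph{finite sums} ${\bf n}\cdot\ep=\sum_{i\in\ep}n_i$ send $x$ close to $y$; no choice of the $\ep$'s makes an arithmetic progression $n,2n,\dots,dn$ appear among these sums, so you cannot extract from $(x,y)\in\RP^{[d]}$ a single $n\in N(x,U)$ witnessing $d$-recurrence of an arbitrary measure preserving system. The patterns that $\RP^{[d]}$ hands you are IP-type configurations (these drive Theorems \ref{huang12} and \ref{poincare} on the \emph{cubic} families $\F_{SG_d}$ and $\F_{P_d}$), whereas $\F_{Poi_d}$ concerns progressions $n,2n,\dots,dn$; the bridge between the two is not combinatorial. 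The paper's actual route (Corollary \ref{cor-3.12}) imports the Bergelson--Host--Kra theorem: for an ergodic system the sequence $n\mapsto\mu(A\cap T^{-n}A\cap\dots\cap T^{-dn}A)$ is a $d$-step nilsequence plus a term tending to zero in uniform density, so the $d$-recurrence set $F$ of $(A,\mu)$ differs from a Nil$_d$ Bohr$_0$-set by a set $M$ with $BD^*(M)=0$; and one then needs not merely that $N(x,U)$ meets every Nil$_d$ Bohr$_0$-set but that it does so in a \emph{syndetic} set (Remark \ref{huangrem}, extracted from the minimal-joining construction in Theorem \ref{huang10}), precisely in order to survive deleting the Banach-density-zero exceptional set $M$ and land a nonzero element of $N(x,U)\cap F$. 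Your ``alternative'' suggestion gestures toward this but supplies neither of the two needed ingredients (the nilsequence decomposition of the multicorrelation sequence, and the syndeticity upgrade), so as written the implication $(1)\Rightarrow(2)$ is not established.
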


Using the Ramsey property of the families, we can show that one can
use $\F_{Poi_d}^*$, $\F_{Bir_d}^*$ and $\F_{d,0}$ to characterize
$d$-step almost automorphy.

\begin{thm}\label{intro-20}
Let $(X,T)$ be a minimal t.d.s. and $d\in\N$. Then the following
statements are equivalent
\begin{enumerate}
\item $(X,T)$ is d-step almost automorphic.

\item There is $x\in X$ such that $N(x,V)\in \F_{Poi_d}^*$ for each
neighborhood $V$ of $x$.

\item There is $x\in X$ such that $N(x,V)\in \F_{Boi_d}^*$ for each
neighborhood $V$ of $x$.

\item There is $x\in X$ such that $N(x,V)\in \F_{d,0}$ for each
neighborhood $V$ of $x$.

\end{enumerate}
\end{thm}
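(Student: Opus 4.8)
The plan is to deduce this theorem from Theorem \ref{intro-10} by a duality argument, using the fact that $(X,T)$ is $d$-step almost automorphic means precisely that there is a point $x\in X$ with $\RP^{[d]}[x]=\{x\}$. The chain of implications $(1)\Rightarrow(2)\Rightarrow(3)\Rightarrow(4)\Rightarrow(1)$ is natural to attempt, but in fact the cleanest route is to prove each of (2), (3), (4) equivalent to (1) directly. First I would observe that, by Proposition \ref{...} (the inclusions $\F_{Poi_d}\subset\F_{Bir_d}\subset\F_{d,0}^*$), we get the reverse inclusions on the dual families, $\F_{d,0}\subset\F_{Bir_d}^*\subset\F_{Poi_d}^*$; hence $(4)\Rightarrow(3)\Rightarrow(2)$ is immediate once we know the relevant $N(x,V)$ lies in $\F_{d,0}$. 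So the real content is $(1)\Leftrightarrow(4)$ together with $(2)\Rightarrow(1)$.

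For $(1)\Rightarrow(4)$: fix $x$ with $\RP^{[d]}[x]=\{x\}$ and a neighborhood $V$ of $x$. I want to show $N(x,V)\in\F_{d,0}$, i.e.\ $N(x,V)$ contains a set of the form $N(z_0,W)$ for some $d$-step nilsystem. The idea is to use the factor map $\pi\colon X\to X_d=X/\RP^{[d]}$ onto the maximal $d$-step nilfactor. Since $\RP^{[d]}[x]=\{x\}$, the point $x$ is the unique preimage of $\pi(x)$, so $\pi$ is injective over $\pi(x)$; by a standard almost one-to-one argument (continuity of $\pi$ and compactness) there is an open neighborhood $W$ of $\pi(x)$ in $X_d$ with $\pi^{-1}(W)\subseteq V$. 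Then $N(\pi(x),W)\subseteq N(x,V)$, and $N(\pi(x),W)$ is by definition a Nil$_d$ Bohr$_0$-set (after noting $\pi(x)\in W$), so $N(x,V)\in\F_{d,0}$.

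For $(2)\Rightarrow(1)$ (which then also covers $(3)\Rightarrow(1)$): suppose $x$ satisfies $N(x,V)\in\F_{Poi_d}^*$ for every neighborhood $V$ of $x$, and suppose toward a contradiction that $\RP^{[d]}[x]\ne\{x\}$, so there is $y\ne x$ with $(x,y)\in\RP^{[d]}$. Pick disjoint open neighborhoods $V$ of $x$ and $U$ of $y$. By Theorem \ref{intro-10} (the implication $(1)\Rightarrow(2)$), $N(x,U)\in\F_{Poi_d}$. Now I would like to conclude that $N(x,V)$ and $N(x,U)$ must intersect, which is impossible since $V\cap U=\emptyset$ forces $N(x,V)\cap N(x,U)=\emptyset$. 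The point $N(x,V)\in\F_{Poi_d}^*$ means $N(x,V)$ meets every member of $\F_{Poi_d}$, in particular $N(x,U)$ — contradiction. This gives $\RP^{[d]}[x]=\{x\}$, i.e.\ $(X,T)$ is $d$-step almost automorphic, establishing $(2)\Rightarrow(1)$.

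The main obstacle I anticipate is the almost one-to-one / open-set extraction step in $(1)\Rightarrow(4)$: one must be careful that $\RP^{[d]}[x]=\{x\}$ genuinely yields, via the compactness of fibers of $\pi$ and the closedness of $\RP^{[d]}$, an honest open $W\ni\pi(x)$ with $\pi^{-1}(W)\subseteq V$ — this uses that $X$ is compact metric and that $\pi^{-1}(\pi(x))=\RP^{[d]}[x]$, the latter being exactly the description of the maximal $d$-step nilfactor from \cite{SY}. Beyond that, the argument is a formal manipulation of the dual families and an appeal to Theorem \ref{intro-10}; the ``Ramsey property'' remark in the statement is what guarantees these dual families behave well, but for the equivalences as stated the filter/duality bookkeeping above suffices.
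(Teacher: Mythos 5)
Your proposal is correct and the cycle $(1)\Rightarrow(4)\Rightarrow(3)\Rightarrow(2)\Rightarrow(1)$ closes, but your key direction $(1)\Rightarrow(4)$ goes by a genuinely different route from the paper's. The paper (Theorem \ref{AAgeneral}) argues by contraposition: if $N(x,V)\notin\F_{d,0}$ then $\Z\setminus N(x,V)=N(x,X\setminus V)\in\F_{d,0}^*$, and the Ramsey property of $\F_{d,0}^*$ is used, via the nested-closed-balls localization from the proof of Theorem \ref{aainfty}, to produce a point $y\in X\setminus V$ with $N(x,U)\in\F_{d,0}^*$ for every neighborhood $U$ of $y$, whence $y\in\RP^{[d]}[x]\setminus\{x\}$ by Theorem \ref{rpd} — a contradiction. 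You instead argue directly through the factor map $\pi\colon X\to X_d=X/\RP^{[d]}$: since $\pi^{-1}(\pi(x))=\RP^{[d]}[x]=\{x\}$, compactness gives an open $W\ni\pi(x)$ with $\pi^{-1}(W)\subseteq V$, hence $N(\pi(x),W)\subseteq N(x,V)$. This is cleaner and dispenses with the Ramsey property entirely: your steps $(4)\Rightarrow(3)\Rightarrow(2)$ are pure duality applied to the inclusions $\F_{Poi_d}\subseteq\F_{Bir_d}\subseteq\F_{d,0}^*$ of Theorem \ref{HSY2011}, and your $(2)\Rightarrow(1)$ is the same disjoint-neighborhood argument the paper uses. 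The one point you gloss over: $X_d$ is a \emph{system of order $d$}, i.e.\ an inverse limit of $d$-step minimal nilsystems, not literally a $d$-step nilsystem, so $N(\pi(x),W)$ is not a Nil$_d$ Bohr$_0$-set ``by definition''; you need one more line — shrink $W$ to a basic open set $\pi_i^{-1}(W_i)$ coming from a coordinate nilsystem $X_i$ of the inverse limit, so that $N(\pi(x),W)\supseteq N(\pi_i(\pi(x)),W_i)$, which is an honest Nil$_d$ Bohr$_0$-set and puts $N(x,V)$ in the (upward hereditary) family $\F_{d,0}$. With that addendum your argument is complete.
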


\subsubsection{$d$-step almost automorphy and $SG_d$-sets}

In this paper, we also discuss $SG_d$-sets introduced by Host and
Kra recently \cite{HK10} and show that one may use it to
characterize regionally proximal pairs of order $d$.

\medskip

Let $d\ge 1$ be an integer and let $P=\{p_i\}_i$ be a (finite or
infinite) sequence in $\Z$. The {\em set of sums with gaps of length
less than $d$} of $P$ is the set $SG_d(P)$ of all integers of the
form
$$\ep_1p_1+\ep_2p_2+\ldots +\ep_np_n$$ where $n\ge 1$ is an integer,
$\ep_i\in \{0,1\}$ for $1\le i\le n$, the $\ep_i$ are not all equal
to $0$, and the blocks of consecutive $0$'s between two $1$ have
length less than $d$. A subset $A \subseteq \Z$ is an $SG_d$-set if
$A=SG_d(P)$ for some infinite sequence of $\Z$; and it is an
$SG^*_d$-set if $A \cap SG_d(P)\neq \emptyset$ for every infinite
sequence $P$ in $\Z$. Let $\F_{SG_d}$ be the family generated by all
$SG_d$-sets.  Note that each $SG_1$-set is a $\D$-set, and each
$SG_1^*$-set is a $\D^*$-set.

\medskip

The following is the main result of \cite{HK10}

\begin{prop}[Host-Kra]
Every $SG_d^*$-set is a PW-Nil$_d$ Bohr$_0$-set.
\end{prop}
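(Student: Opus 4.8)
The plan is to convert the combinatorial hypothesis into a statement about an invariant measure via a Furstenberg-type correspondence principle, invoke the Host--Kra structure theory to pass to the $d$-th Host--Kra factor of the resulting system, and then read off that factor a Nil$_d$ Bohr$_0$-set sitting inside $A$ along a sequence of long intervals. As a first reduction I would check that an $SG_d^*$-set $A$ is syndetic, and hence of positive upper Banach density $\delta:=d^*(A)>0$: if $A$ had arbitrarily long gaps one could build an infinite sequence $P=\{p_n\}$ greedily, picking, after $p_1,\ldots,p_{n-1}$ have been chosen, a term $p_n$ so deep inside a long gap of $\Z\setminus A$ that the interval $[\,p_n-R_n,\,p_n+R_n\,]$, with $R_n=|p_1|+\ldots+|p_{n-1}|$, is disjoint from $A$; every element of $SG_d(P)$ of the form $\ep_1p_1+\ldots+\ep_np_n$ with $\ep_n=1$ lies in that interval, so $SG_d(P)\cap A=\emptyset$, contradicting the $SG_d^*$ property. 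Fix then a sequence of intervals $I_k$ with $|I_k|\to\infty$ and $|A\cap I_k|/|I_k|\to\delta$.

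Let $\sigma$ be the shift on $\{0,1\}^\Z$, put $x_0=\mathbf 1_A$, and let $\mu$ be a weak-$*$ limit point, along a subsequence, of $\frac1{|I_k|}\sum_{n\in I_k}\delta_{\sigma^n x_0}$. Then $\mu$ is $\sigma$-invariant, $\mu(C)=\delta$ for the clopen set $C=\{x\in\{0,1\}^\Z:x(0)=1\}$, and $N(x_0,C)=A$. The Host--Kra structure theory, applied to $(\{0,1\}^\Z,\mu,\sigma)$, provides the $d$-th Host--Kra factor $\mathcal Z_d$ --- an inverse limit of $d$-step nilsystems --- and we put $f=\E(\mathbf 1_C\mid\mathcal Z_d)\ge 0$, so $\int f\,d\mu=\delta$.

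The crucial --- and, I expect, hardest --- step is to show that $SG_d$-recurrence of $\mathbf 1_C$ is controlled by $\mathcal Z_d$: replacing $\mathbf 1_C$ by $f$ should not change the behaviour of averages of products of shifted copies of $\mathbf 1_C$ taken along $SG_d$-configurations $\ep_1p_1+\ldots+\ep_mp_m$, the error being bounded by a power of the Host--Kra seminorm $\|\mathbf 1_C-f\|_{d+1}$, which vanishes by the defining property of $\mathcal Z_d$. This is precisely the point at which the requirement in $SG_d(P)$ that the blocks of consecutive $0$'s have length less than $d$ is matched by the ``$d$-step'' nature of the factor; I would establish it by a van der Corput / cube-averaging induction on the length $m$ of the configuration, parallel to the Host--Kra analysis of the seminorms $\|\cdot\|_k$. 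Granting this, the $SG_d^*$-property of $A$ descends to $f$ and forces $f$ to be recurrent to a positive level along $SG_d$-configurations on $\mathcal Z_d$.

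Finally I would extract the nil-Bohr set. Since $\mathcal Z_d$ is an inverse limit of $d$-step nilsystems $Y_j$, martingale convergence gives $\E(\mathbf 1_C\mid Y_j)\to f$ in $L^1$, so for $j$ large the function $g=\E(\mathbf 1_C\mid Y_j)$ on the $d$-step nilsystem $(Y_j,S)$ satisfies $\int g\,d\mu=\delta$, whence $\{g>\delta/2\}$ has positive measure; after replacing $g$ by a continuous approximation, $\{g>\delta/3\}$ contains a nonempty open set $V$, and $N(y_0,V)$ for $y_0\in V$ is a Nil$_d$ Bohr$_0$-set. It remains to see that $N(y_0,V)$ lies inside $A$ along a sequence of long intervals: using that the $I_k$ realize $\delta$ together with the seminorm control above, $A\cap I_k$ agrees, up to a subset of relative density tending to $0$, with the $Y_j$-return set $\{n\in I_k:S^n\phi(x_0)\in\{g>0\}\}$, where $\phi$ is the factor map onto $Y_j$; using minimality of $Y_j$ to translate so that $y_0$ plays the role of $\phi(x_0)$, and then discarding the sparse error and thinning the intervals, one obtains a Nil$_d$ Bohr$_0$-set $B$ with $B\cap I_k\subseteq A$ for all $k$, i.e.\ $A$ is a piecewise Nil$_d$ Bohr$_0$-set. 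Besides the seminorm estimate, the delicate bookkeeping is this last step --- the base-point alignment and the upgrade from ``agreement up to sparse error on $I_k$'' to a genuine containment --- which I would handle by contradiction, noting that a persistent positive-density discrepancy would once more let one splice together an $SG_d(P)$ inside $\Z\setminus A$.
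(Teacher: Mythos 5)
First, a point of comparison: the paper does not prove this proposition at all --- it is stated as the main result of \cite{HK10} and simply quoted, so there is no internal proof to measure your sketch against. What you have written is an attempted reconstruction of Host--Kra's own argument, and its architecture --- syndeticity of $SG_d^*$-sets, the Furstenberg correspondence principle, control of $SG_d$-configurations by the $d$-th Host--Kra factor, and extraction of a Nil$_d$ Bohr$_0$-set agreeing with $A$ on long intervals --- is indeed the shape of their proof. Your syndeticity argument is correct: every element of $SG_d(P)$ whose top index is $n$ lies within $R_n=|p_1|+\cdots+|p_{n-1}|$ of $p_n$, so a non-syndetic $A$ admits an infinite $P$ with $SG_d(P)\cap A=\emptyset$.

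As a proof, however, the proposal has genuine gaps at exactly the two points you yourself flag as hardest. (1) The assertion that products of shifted copies of $\mathbf 1_C$ along $SG_d$-configurations are controlled by a power of $\|\mathbf 1_C-f\|_{d+1}$ is the central technical content of \cite{HK10}, not a routine van der Corput induction: you never specify which averages (over which of the parameters $p_1,\dots,p_m$, with what weights) are being estimated, and the mechanism matching the ``blocks of $0$'s of length less than $d$'' constraint to the $(d+1)$-st seminorm is precisely what must be invented. Host--Kra do not prove a single averaging identity; they construct the sequence $P$ term by term using positivity of dual functions on the nilfactor. Without that construction your closing step --- that a persistent positive-density discrepancy lets one ``splice together an $SG_d(P)$ inside $\Z\setminus A$'' --- does not go through either, since positive density alone yields difference-set or IP-type configurations, not sums with the bounded-gap structure defining $SG_d$. (2) The weak-$*$ limit $\mu$ need not be ergodic, so the vanishing of $\|\mathbf 1_C-\E(\mathbf 1_C\mid\mathcal Z_d)\|_{d+1}$ is not available as stated; one must decompose into ergodic components, after which the exact identity $N(x_0,C)=A$ and the genericity of $x_0$ along the intervals $I_k$ both degrade, and repairing this is a nontrivial part of the bookkeeping in \cite{HK10}. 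In short, the plan points in the right direction, but the substance of the theorem is concentrated in the steps that are named rather than carried out.
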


Host and Kra \cite{HK10} asked the following

\begin{ques}\label{Ques-HK}
Is every Nil$_d$ Bohr$_0$-set an $SG_d^*$-set?
\end{ques}

We have

\begin{thm}\label{intro-11}
Let $(X,T)$ be a minimal t.d.s., $x,y\in X$, and $d\in \N$. Then
$(x,y)\in \RP^{[d]}$  if and only if $N(x,U)\in \F_{SG_d}$ for each
neighborhood $U$ of $y$.
\end{thm}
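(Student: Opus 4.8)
The plan is to prove Theorem \ref{intro-11} by combining Theorem \ref{intro-10} with a direct comparison of the family $\F_{SG_d}$ against the families appearing there. The key observation is that $SG_d$-sets are exactly the ``cubic'' version of higher order recurrence objects, so I expect the inclusions $\F_{SG_d}\subseteq \F_{d,0}^*$ on one side and a recurrence-set statement on the other to sandwich the $SG_d$ condition between the already-known equivalences.

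\medskip

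First I would handle the easy direction: if $(x,y)\in\RP^{[d]}$ then $N(x,U)\in\F_{SG_d}$ for every neighborhood $U$ of $y$. Here I would use the Host--Kra result (Proposition of the excerpt) that every $SG_d^*$-set is a PW-Nil$_d$ Bohr$_0$-set, hence in particular every $SG_d^*$-set is a Nil$_d$ Bohr$_0$-set-like object; more usefully, I would argue contrapositively that $\F_{SG_d}^*\subseteq \F_{d,0}$ is NOT quite what I want, so instead I would show directly that membership in $\F_{SG_d}$ follows from $(x,y)\in\RP^{[d]}$ by constructing, for a given neighborhood $U$ of $y$, an infinite sequence $P=\{p_i\}$ with $SG_d(P)\subseteq N(x,U)$. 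This construction is the cubic analogue of the classical fact that $\RP=\RP^{[1]}$ forces $N(x,U)$ to contain a $\Delta$-set: one builds the $p_i$ inductively using the definition of $\RP^{[d]}$ (or rather the dynamical parallelepiped / face transformation machinery of \cite{HKM,SY} that underlies it), so that every sum with gaps of length less than $d$ lands in $N(x,U)$. The careful bookkeeping of which $\ep$-patterns are allowed (blocks of $0$'s shorter than $d$) is exactly matched by the structure of the system $\mathbf{Q}^{[d]}(X)$ of dynamical cubes; I expect to invoke the relevant lemma from \cite{HSY} rather than redo it.

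\medskip

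For the converse direction, suppose $N(x,U)\in\F_{SG_d}$ for every neighborhood $U$ of $y$; I must show $(x,y)\in\RP^{[d]}$. By Theorem \ref{intro-10} it suffices to show, say, that $N(x,U)\in\F_{Bir_d}$ for every such $U$, or equivalently $N(x,U)\in\F_{d,0}^*$. So the real content here is the inclusion $\F_{SG_d}\subseteq\F_{d,0}^*$, i.e. every $SG_d$-set meets every Nil$_d$ Bohr$_0$-set; dualized, this says every Nil$_d$ Bohr$_0$-set is an $SG_d^*$-set, which is precisely Question \ref{Ques-HK} of Host and Kra. Since that question is open in general, I cannot take this route naively, and this is the main obstacle. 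The way around it is that I do not need the full set-theoretic inclusion: I only need it for the specific sets $N(x,U)$ arising from a minimal system. The plan is therefore to show: if $N(x,U)\in\F_{SG_d}$ for all neighborhoods $U$ of $y$, then for the maximal $d$-step nilfactor $\pi:(X,T)\to(X_d,T)$ one has $\pi(x)=\pi(y)$. One argues that if $\pi(x)\neq\pi(y)$, then there is a neighborhood $U$ of $y$ and a Nil$_d$ Bohr$_0$-set-type obstruction inside the nilsystem $X_d$ that is disjoint from $N(x,U)$; combined with the fact (again from \cite{HSY}, the Proposition $\F_{Bir_d}\subseteq\F_{d,0}^*$) and a minimality/lifting argument, this contradicts the $SG_d$-membership by producing an $SG_d$-set disjoint from a Nil$_d$ Bohr$_0$-set in that particular nilsystem — and in a nilsystem the two families CAN be compared, since nilsystems are distal and one has the cube structure explicitly.

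\medskip

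So in summary: the forward implication is a cube-construction argument essentially contained in \cite{HKM,SY,HSY}; the reverse implication is reduced, via Theorem \ref{intro-10}, to showing $\pi(x)=\pi(y)$, and the crux is replacing the unavailable general inclusion ``Nil$_d$ Bohr$_0 \subseteq SG_d^*$'' by its restriction to nilsystems, where distality and the explicit parallelepiped structure make it accessible. I expect the reverse direction, specifically the nilsystem-level comparison of $SG_d$-sets with Nil$_d$ Bohr$_0$-sets, to be where essentially all the work lies; the rest is assembling pieces already proved in the excerpt and its references.
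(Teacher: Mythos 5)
Your proposal has the two directions backwards in difficulty, and the direction you identify as the crux does not go through as described. For the implication ``$N(x,U)\in\F_{SG_d}$ for all $U$ $\Rightarrow$ $(x,y)\in\RP^{[d]}$'' you propose to establish $\F_{SG_d}\subseteq\F_{d,0}^*$, correctly note that this dualizes to the open Host--Kra Question \ref{Ques-HK}, and then claim to circumvent it by restricting to nilsystems ``where distality and the explicit parallelepiped structure make it accessible.'' But that is not a reduction at all: a Nil$_d$ Bohr$_0$-set is by definition a set of return times $N(z_0,V)$ in a nilsystem, so the ``restriction to nilsystems'' of the statement ``every Nil$_d$ Bohr$_0$-set is an $SG_d^*$-set'' is the full open question verbatim. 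Tracing your sketch through (if $\pi(x)\neq\pi(y)$, take disjoint neighborhoods $V\ni\pi(y)$ and $V'\ni\pi(x)$ in $X_d$; then $N(x,\pi^{-1}(V))$ contains an $SG_d$-set disjoint from the Nil$_d$ Bohr$_0$-set $N(\pi(x),V')$), the contradiction you need is exactly that $N(\pi(x),V')$ is $SG_d^*$, i.e.\ Question \ref{Ques-HK}. The paper avoids all of this: any $SG_d(P)$ contains $FS(\{p_1,\dots,p_{d+1}\})$ (a sub-sum of consecutive indices has zero-blocks of length at most $d-1<d$), hence any set in $\F_{SG_d}$ contains a finite IP-set of length $d+1$, and by Theorem \ref{ShaoYe} this already forces $(x,y)\in\RP^{[d]}$. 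That direction is one line.

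Conversely, the direction you call easy --- $(x,y)\in\RP^{[d]}\Rightarrow N(x,U)\supseteq SG_d(P)$ for some infinite $P$ --- is where essentially all of the work in the paper lies, and you do not supply it; there is no off-the-shelf lemma in \cite{HSY} to invoke, and the paper carries out the construction from scratch. It builds, stage by stage via Theorem \ref{ShaoYe}(1)--(2), tuples $n_1^k,\dots,n_{d+1}^k$ with $T^{n_{d+1}^k}x$ within $\eta_k$ of $y$ and with $T^{s+r}x$ within $\eta_k$ of $T^sx$ for all $r$ in a prescribed sum-set $E_k$ and all $s$ in the finite IP-set $S_{k-1}$ accumulated so far, and then defines $P_k=n^k_{d+1}+n^{k+1}_{k\,(\mathrm{mod}\,d)}+\cdots+n^{k+d}_{k\,(\mathrm{mod}\,d)}$; the point of this specific interleaving is that every element of $SG_d(P)$ telescopes into a sum $a_1+\cdots+a_m$ with $a_1=n_{d+1}^{i_1}$ and successive $a_j$ lying in successive $E$'s, so the triangle inequality with $\sum_i\eta_i<\eta$ places it in $N(x,U)$. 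Your outline gestures at the right kind of construction (the $d=1$ case is Veech's $\Delta$-set argument, as you note), but the combinatorial bookkeeping that makes the gap-less-than-$d$ condition compatible with this telescoping is the actual content of the theorem, and it is missing from your proposal.
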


Combining Theorems \ref{intro-10} and \ref{intro-11} we see that
Nil$_d$ Bohr$_0$-sets and $SG_d^*$-sets are closely related. A
direct corollary of Theorem \ref{intro-11} is: let $(X,T)$ be a
minimal t.d.s., $x\in X$, and $d\in \N$. If $x$ is
$\F^*_{SG_d}$-recurrent, then it is $d$-step almost automorphic. We
have the following conjecture.
\begin{conj}
Let $(X,T)$ be a minimal t.d.s., $x\in X$, and $d\in \N$. Then $x$
is $d$-step almost automorphic if and only if it is
$SG^*_d$-recurrent.
\end{conj}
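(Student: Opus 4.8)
The plan is to treat the two implications separately, since they are of completely different difficulty. The implication ``$x$ is $SG_d^*$-recurrent $\Rightarrow$ $x$ is $d$-step almost automorphic'' is unconditional, being exactly the corollary of Theorem \ref{intro-11} recorded above, and I would prove its contrapositive. Suppose $\RP^{[d]}[x]\neq\{x\}$, so there is $y\neq x$ with $(x,y)\in\RP^{[d]}$, and choose disjoint open neighborhoods $U\ni y$ and $V\ni x$. By Theorem \ref{intro-11}, $N(x,U)\in\F_{SG_d}$, i.e. $N(x,U)\supseteq SG_d(P)$ for some infinite sequence $P$ in $\Z$. Since $U\cap V=\emptyset$ forces $N(x,U)\cap N(x,V)=\emptyset$, we obtain $N(x,V)\cap SG_d(P)=\emptyset$, so $N(x,V)$ is not an $SG_d^*$-set and $x$ is not $SG_d^*$-recurrent. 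This is the contrapositive of the desired direction.

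For the converse ``$x$ is $d$-step almost automorphic $\Rightarrow$ $x$ is $SG_d^*$-recurrent'' I would first reduce to the maximal $d$-step nilfactor. Assume $\RP^{[d]}[x]=\{x\}$ and fix a neighborhood $V$ of $x$; the goal is to show $N(x,V)$ is an $SG_d^*$-set. Let $\pi:X\to X_d:=X/\RP^{[d]}$ be the factor map onto the maximal $d$-step nilfactor (a closed invariant equivalence relation by \cite{SY}) and set $x^*=\pi(x)$. Since $\pi^{-1}(x^*)=\RP^{[d]}[x]=\{x\}$, the closed set $\pi(X\setminus V)$ misses $x^*$, so a routine compactness argument produces a neighborhood $V^*$ of $x^*$ with $\pi^{-1}(V^*)\subseteq V$, whence $N(x^*,V^*)\subseteq N(x,V)$. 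Because the class of $SG_d^*$-sets is closed under passing to supersets, it suffices to show that $N(x^*,V^*)$ is an $SG_d^*$-set.

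The final reduction is where the plan meets the essential difficulty. As $X_d$ is an inverse limit of minimal $d$-step nilsystems, there is a factor map $\phi:X_d\to Y$ onto a genuine $d$-step nilsystem and a neighborhood $W$ of $\phi(x^*)$ with $N(\phi(x^*),W)\subseteq N(x^*,V^*)$; here $N(\phi(x^*),W)$, being the return-time set of a point to an open neighborhood of itself in a $d$-step nilsystem, is a Nil$_d$ Bohr$_0$-set, and by upward closure once more it is enough to know this smaller set is $SG_d^*$. Thus the converse reduces exactly to the assertion that \emph{every Nil$_d$ Bohr$_0$-set is an $SG_d^*$-set}, which is precisely Host and Kra's open Question \ref{Ques-HK}. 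This is the main obstacle: the dynamical reductions above are routine, but the combinatorial/nilpotent problem of locating, inside an arbitrary Nil$_d$ Bohr$_0$-set, a point of every $SG_d(P)$ is open for $d\geq 2$ (for $d=1$ it is the classical fact that a Bohr$_0$-set is a $\D^*$-set together with $SG_1^*=\D^*$). Since only the opposite inclusion is known --- every $SG_d^*$-set is a PW-Nil$_d$ Bohr$_0$-set by the Host--Kra Proposition, which with Theorem \ref{intro-11} makes the coincidence of the two classes very plausible --- the equivalence is stated here as a conjecture rather than proved.
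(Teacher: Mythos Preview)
Your proposal is correct and matches the paper's own treatment: this statement is a \emph{conjecture} in the paper, not a theorem, and the paper does not prove it. The paper records exactly the two points you make --- the implication ``$SG_d^*$-recurrent $\Rightarrow$ $d$-step AA'' is an immediate corollary of Theorem~\ref{intro-11}, while the converse is open and would follow from a positive answer to Host--Kra's Question~\ref{Ques-HK}.

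The only minor difference is in the reduction for the hard direction: the paper observes that a positive answer to Question~\ref{Ques-HK} combined with Theorem~\ref{intro-20} (which gives $N(x,V)\in\F_{d,0}$ for a $d$-step AA point $x$) yields the conjecture, whereas you spell out the reduction directly through the factor map $\pi:X\to X_d$ and a further approximation by a genuine nilsystem. These are the same argument unpacked to different depths --- your fiber argument $\pi^{-1}(x^*)=\{x\}$ is exactly what underlies the implication $(1)\Rightarrow(2)$ in Theorem~\ref{intro-20}. The paper also notes an obstruction you did not mention: the Ramsey-property method used for $\F_{Poi_d}$, $\F_{Bir_d}$, $\F_{d,0}^*$ in Theorem~\ref{intro-20} is unavailable here because $\F_{SG_d}$ fails the Ramsey property (Appendix~\ref{appendix:Ramsey}), which is why one is forced back to Question~\ref{Ques-HK}.
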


Since $SG_d$-sets do not have the Ramsey property (Appendix
\ref{appendix:Ramsey}), we can not apply the methods in the proof of
Theorem \ref{intro-20} to show the above conjecture. Note that if
Question \ref{Ques-HK} has a positive answer, then by using Theorem
\ref{intro-20} the above conjecture holds.

\subsubsection{Cubic version of multiple Poincar\'e recurrence sets
}

One can also characterize the higher order regionally proximal
relation via cubic version of multiple Poincar\'e and Birkhoff
recurrence sets. For $d\in \N$, a subset $F$ of $\Z$ is a {\em
Poincar\'e recurrence set of order $d$} if for each measure
preserving system $(X,\mathcal{B},\mu,T)$ and $A\in \mathcal{B}$
with positive measure there are $n_1,\ldots,n_d\in\Z$ such that
$FS(\{n_i\}_{i=1}^d)=\{n_{i_1}+\cdots+n_{i_k}:1\leq
i_1<\cdots<i_k\leq d\}\subset F$ and
$$\mu\Big(A\cap\big (\bigcap _{n\in
FS(\{n_i\}_{i=1}^d)} T^{-n}A\big )\Big)>0.$$ Similarly, we define
Birkhoff recurrence sets of order $d$. Let for $d\in \N$, $\F_{P_d}$
(resp. $\F_{B_d}$) be the family of all Poincar\'e recurrence sets
of order $d$ (resp. the family of all Birkhoff recurrence sets of
order $d$).

Via recurrence sets of order $d$, we have the following result:

\begin{thm}\label{intro-12}
Let $(X,T)$ be a minimal t.d.s. and $x,y\in X$, $d\in \N$. Then the
following statements are equivalent:
\begin{enumerate}
\item
 $(x,y)\in \RP^{[d]}$.

\item  $N(x,U)\in \F_{P_d}$ for each
neighborhood $U$ of $y$.

\item  $N(x,U)\in \F_{B_d}$ for each neighborhood $U$ of $y$.
\end{enumerate}
\end{thm}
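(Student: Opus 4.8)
\textbf{Proof proposal for Theorem \ref{intro-12}.}

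The plan is to prove the cycle of implications $(1)\Rightarrow(2)\Rightarrow(3)\Rightarrow(1)$, using the already-established Theorems \ref{intro-10} and \ref{intro-11} as black boxes together with the structure theory of $\RP^{[d]}$. The easy half is $(2)\Rightarrow(3)$: a Poincar\'e recurrence set of order $d$ is automatically a Birkhoff recurrence set of order $d$, because any minimal t.d.s. $(X,T)$ with a nonempty open set $U$ carries an invariant measure $\mu$ (Krylov--Bogolyubov), and $\mu(U)>0$ by minimality, so positivity of $\mu(A\cap\bigcap_{n\in FS(\{n_i\})}T^{-n}A)$ with $A=U$ forces $U\cap\bigcap_{n\in FS(\{n_i\})}T^{-n}U\ne\emptyset$; hence $\F_{P_d}\subseteq\F_{B_d}$ and clause (2) implies clause (3).

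For $(3)\Rightarrow(1)$ I would compare $\F_{B_d}$ with the family $\F_{SG_d}$ of Theorem \ref{intro-11}, or directly with $\F_{Bir_d}$ of Theorem \ref{intro-10}. The key observation is that if $N(x,U)$ meets the "cube-set" $FS(\{n_i\}_{i=1}^d)$ of a tuple realizing $U\cap\bigcap T^{-n}U\ne\emptyset$, then by shrinking $U$ and iterating one produces, inside $N(x,U')$ for every smaller neighborhood $U'$, a set of the appropriate combinatorial type (an $SG_d$-set, or a set of $d$-topological recurrence in the usual sense), so that $N(x,U)\in\F_{B_d}$ for all neighborhoods $U$ of $y$ forces $N(x,U)\in\F_{SG_d}$ (equivalently $\in\F_{Bir_d}$) for all such $U$; then Theorem \ref{intro-11} (or the equivalence of (1) and (3) in Theorem \ref{intro-10}) gives $(x,y)\in\RP^{[d]}$. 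Concretely I expect to argue: given a putative cube tuple $(n_1,\dots,n_d)$ with $FS(\{n_i\})\subseteq F$ and the recurrence property, the partial sums along a monotone ordering of the $n_i$ exhibit gaps of length less than $d$ between consecutive $1$'s in the defining $\{0,1\}$-vectors, so $FS(\{n_i\}_{i=1}^d)\supseteq$ an initial segment of an $SG_d$-set; a compactness/diagonal argument over a shrinking neighborhood base at $y$ upgrades finitely many such segments to a full infinite $SG_d$-sequence, placing $N(x,U)$ in $\F_{SG_d}$.

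The implication $(1)\Rightarrow(2)$ is where the real work lies, and it is the main obstacle. Here I would use the structure theorem of \cite{HKM,SY}: for a minimal $(X,T)$, the quotient $\pi\colon X\to X/\RP^{[d]}=:Z$ is the maximal $d$-step nilfactor, a minimal $d$-step nilsystem (or an inverse limit of such). If $(x,y)\in\RP^{[d]}$ then $\pi(x)=\pi(y)$, and a neighborhood $U$ of $y$ contains $\pi^{-1}(W)$-type behavior only after lifting, so the combinatorial content must be extracted on $Z$. On a $d$-step nilsystem one has the cubic structure: the diagonal orbit of $(z,z,\dots,z)$ under the Host--Kra cube group $\mathcal{HK}^{[d]}$ is minimal in the appropriate cube space, which yields, for any measure-preserving extension and any positive-measure $A$, a tuple $(n_1,\dots,n_d)$ with $\mu(A\cap\bigcap_{n\in FS(\{n_i\})}T^{-n}A)>0$ and simultaneously $FS(\{n_i\})\subseteq N(x,U)$ — this is precisely the assertion that $N(x,U)\in\F_{P_d}$. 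The technical heart is showing that the "return times to $U$" can be organized into an $FS$-set of length $d$ that also serves as a multiple-recurrence time for an arbitrary m.p.s.; I expect to obtain this by combining the lifting of $\RP^{[d]}$-pairs to cube points in $\mathbf{Q}^{[d]}(X)$ (so that $N(x,U)$ contains $\{{\bf n}\cdot\ep:\ep\in\{0,1\}^d\setminus\{0\}\}$ for suitable ${\bf n}$), with an application of the multiple Poincar\'e recurrence theorem along those arithmetic patterns — essentially the same mechanism used to prove $(1)\Rightarrow(2)$ in Theorem \ref{intro-10}, but now tracking the full $FS$-set rather than just a single $d$-AP return time. If a direct argument proves awkward, a cleaner route is to show $\F_{Poi_d}\subseteq\F_{P_d}$ abstractly (every $d$-recurrence set contains, or can be refined to, a Poincar\'e recurrence set of order $d$, via the fact that $FS$-configurations are forced inside $d$-recurrence sets by a van der Waerden / IP-recurrence argument), after which $(1)\Rightarrow(2)$ in Theorem \ref{intro-10} immediately yields $(1)\Rightarrow(2)$ here.
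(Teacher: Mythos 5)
Your skeleton (the cycle $(1)\Rightarrow(2)\Rightarrow(3)\Rightarrow(1)$, with $(2)\Rightarrow(3)$ via Krylov--Bogolyubov) is sound as far as it goes, and $\F_{P_d}\subseteq\F_{B_d}$ is indeed what the paper uses for that step. But the implication $(1)\Rightarrow(2)$, which you rightly identify as the heart of the matter, has a genuine gap in your sketch. Your primary route --- lift $(x,y)$ to cube points so that $N(x,U)$ contains sets of the form $FS(\{n_i\}_{i=1}^{d})$, then ``apply the multiple Poincar\'e recurrence theorem along those patterns'' --- omits the essential quantitative step: knowing that $N(x,U)$ contains many finite $FS$-configurations does not by itself produce, for an \emph{arbitrary} measure preserving system $(Y,\nu,S)$ and $A$ with $\nu(A)>0$, one such configuration satisfying $\nu\big(A\cap\bigcap_{n\in FS(\{n_i\}_{i=1}^d)}S^{-n}A\big)>0$. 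The mechanism behind $(1)\Rightarrow(2)$ of Theorem \ref{intro-10} (the Bergelson--Host--Kra nilsequence decomposition, Proposition \ref{bhk}) controls the single-parameter return times $n,2n,\dots,dn$ and has no off-the-shelf cubic analogue. Your fallback, $\F_{Poi_d}\subseteq\F_{P_d}$, is precisely one of the inclusions the paper lists as unknown in its closing questions, so it cannot be invoked. What the paper actually does is prove that $(x,y)\in\RP^{[d]}$ forces $N(x,U)$ to contain an entire infinite $SG_d(P)$-set (Theorem \ref{huang12}, a delicate inductive construction resting on minimality of the face-group action, Theorem \ref{ShaoYe}(2)), and then proves $\F_{SG_d}\subseteq\F_{P_d}$ by iterating a Gillis-type counting lemma (Proposition \ref{infinite-type}, via the scheme of Remark \ref{remark} and Proposition \ref{longproof}(2)): the nested block structure of $SG_d(P)$ supplies $d$ successive independent families of partial sums on which the counting argument can be run. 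Neither ingredient appears in your proposal, and without something of this kind the implication does not close.

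Your $(3)\Rightarrow(1)$ is also more problematic than you suggest, though here the fix is easy. Upgrading $N(x,U)\in\F_{B_d}$ to membership in $\F_{Bir_d}$ or $\F_{SG_d}$ is not available: the relation between $\F_{B_d}$ and $\F_{Bir_d}$ is another question the paper leaves open, and your ``shrink $U$ and iterate'' construction of an infinite $SG_d$-sequence would need the same face-group minimality input as Theorem \ref{huang12}, not just the hypothesis $N(x,U)\in\F_{B_d}$. The direct argument is much shorter: if $N(x,U)$ contains a Birkhoff recurrence set of order $d$, apply its defining property to the minimal system $(X,T)$ itself and the open set $U=B(y,\delta/2)$ to obtain $n_1,\dots,n_d$ with $FS(\{n_i\}_{i=1}^d)\subseteq N(x,U)$ and a point $y'\in U\cap\bigcap_{n\in FS(\{n_i\}_{i=1}^d)}T^{-n}U$; then $x'=x$ and $y'$ witness the definition of $(x,y)\in\RP^{[d]}$, since all the relevant images of $x$ and $y'$ lie in $U$.
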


A direct corollary of Theorem \ref{intro-12} is: let $(X,T)$ be a
minimal t.d.s., $x\in X$, and $d\in \N$. If $x$ is
$\F^*_{P_d}$-recurrent, or $\F^*_{B_d}$-recurrent then it is
$d$-step almost automorphic. We have the following conjecture.

\begin{conj}
Let $(X,T)$ be a minimal t.d.s., $x\in X$, and $d\in \N$. Then  $x$
is $d$-step almost automorphic if and only if it is
$\F_{P_d}^*$-recurrent if and only if it is $\F_{B_d}^*$-recurrent.
\end{conj}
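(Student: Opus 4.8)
The plan is to prove the cycle $(1)\Rightarrow(2)\Rightarrow(3)\Rightarrow(1)$, in the spirit of Theorems \ref{intro-10} and \ref{intro-11}; minimality of $(X,T)$ is used throughout via the fact that every $T$-invariant Borel probability measure on $X$ has full support, and we use repeatedly that $\{{\bf n}\cdot \ep:\ep\in\{0,1\}^d,\ \ep\ne(0,\dots,0)\}=FS(\{n_i\}_{i=1}^d)$ for ${\bf n}=(n_1,\dots,n_d)$. The implication $(2)\Rightarrow(3)$ is soft: it suffices to see $\F_{P_d}\subseteq\F_{B_d}$. Indeed, if $F\in\F_{P_d}$, $(Y,S)$ is a minimal t.d.s.\ and $V\subseteq Y$ is nonempty open, choose an $S$-invariant measure $\nu$ on $Y$; then $\nu(V)>0$ by minimality, so the definition of $\F_{P_d}$ provides $n_1,\dots,n_d$ with $FS(\{n_i\}_{i=1}^d)\subseteq F$ and $\nu\big(V\cap\bigcap_{n\in FS(\{n_i\}_{i=1}^d)}S^{-n}V\big)>0$, whence $V\cap\bigcap_{n\in FS(\{n_i\}_{i=1}^d)}S^{-n}V\ne\emptyset$, so $F\in\F_{B_d}$.

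For $(3)\Rightarrow(1)$ I would prove the cubic analogue of the inclusion $\F_{Bir_d}\subseteq\F_{d,0}^*$ of \cite{HSY}, namely $\F_{B_d}\subseteq\F_{d,0}^*$, and then conclude by Theorem \ref{intro-10}, $(4)\Rightarrow(1)$. Let $F\in\F_{B_d}$ and let $A$ be a Nil$_d$ Bohr$_0$-set, say $A\supseteq N(x_0,W)=\{n\in\Z:S^nx_0\in W\}$ for a $d$-step nilsystem $(Y,S)$, a point $x_0\in Y$ and an open set $W\ni x_0$; replacing $Y$ by the orbit closure of $x_0$ we may assume $(Y,S)$ minimal. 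Applying the defining property of $\F_{B_d}$ to $(Y,S)$ and a small open neighbourhood $V$ of $x_0$ gives $n_1,\dots,n_d$ with $FS(\{n_i\}_{i=1}^d)\subseteq F$ and a point $z\in V$ such that $S^{{\bf n}\cdot \ep}z\in V$ for every $\ep\ne(0,\dots,0)$; that is, the dynamical parallelepiped $(S^{{\bf n}\cdot \ep}z)_{\ep\in\{0,1\}^d}$ has all of its $2^d$ vertices in $V$. Now one argues as in \cite{HSY}: using the rigidity of parallelepipeds in a $d$-step nilsystem (the $(1,\dots,1)$-vertex is a continuous function of the other $2^d-1$ vertices, equivalently $\RP^{[d]}(Y)=\D_Y$) together with the homogeneity of the nilmanifold $Y$, one transfers this near-diagonal parallelepiped from the auxiliary point $z$ to the marked point $x_0$, concluding that for $V$ small enough $S^{{\bf n}\cdot \ep}x_0\in W$ for every $\ep\ne(0,\dots,0)$, i.e.\ $FS(\{n_i\}_{i=1}^d)\subseteq N(x_0,W)\cap F\subseteq A\cap F$. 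Hence $\F_{B_d}\subseteq\F_{d,0}^*$; since by hypothesis $N(x,U)\in\F_{B_d}\subseteq\F_{d,0}^*$ for every neighbourhood $U$ of $y$, Theorem \ref{intro-10} gives $(x,y)\in\RP^{[d]}$.

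The implication $(1)\Rightarrow(2)$ is the heart of the matter, and here the cube structure of $\RP^{[d]}$ matches the shape of $\F_{P_d}$ exactly. Fix a neighbourhood $U$ of $y$. Refining the construction behind Theorem \ref{intro-11} --- iterating the defining property of $\RP^{[d]}$ inside a minimal idempotent of the enveloping semigroup of an appropriate cube system --- I would upgrade $(x,y)\in\RP^{[d]}$ to the assertion that $N(x,U)$ contains a $d$-dimensional IP-configuration: there exist $d$ IP-systems $(n_1^{(\a)})_{\a\in\mathcal F},\dots,(n_d^{(\a)})_{\a\in\mathcal F}$ indexed by a common collection $\mathcal F$ of finite subsets of $\N$ (so $n_i^{(\a\cup\b)}=n_i^{(\a)}+n_i^{(\b)}$ whenever $\a\cap\b=\emptyset$) with $T^{\sum_{i\in E}n_i^{(\a)}}x\in U$ for every $\a\in\mathcal F$ and every nonempty $E\subseteq\{1,\dots,d\}$. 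Granting this, fix a measure preserving system $(Z,\B,\mu,S)$ and $A\in\B$ with $\mu(A)>0$: for each nonempty $E\subseteq\{1,\dots,d\}$ the sequence $\big(\sum_{i\in E}n_i^{(\a)}\big)_{\a\in\mathcal F}$ is again an IP-system, so the Furstenberg--Katznelson IP multiple recurrence theorem, applied to the $2^d-1$ commuting transformations all equal to $S$ along these IP-systems, produces $\a\in\mathcal F$ with $\mu\big(A\cap\bigcap_{E\ne\emptyset}S^{-\sum_{i\in E}n_i^{(\a)}}A\big)>0$. Putting $n_i:=n_i^{(\a)}$ we obtain $FS(\{n_i\}_{i=1}^d)\subseteq N(x,U)$ and $\mu\big(A\cap\bigcap_{n\in FS(\{n_i\}_{i=1}^d)}S^{-n}A\big)>0$, i.e.\ $N(x,U)\in\F_{P_d}$; finally $(1)\Rightarrow(3)$ follows from $(1)\Rightarrow(2)\Rightarrow(3)$, closing the cycle. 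The main obstacle is exactly the extraction of the $d$ simultaneous IP-systems from $(x,y)\in\RP^{[d]}$: this is more delicate than the $SG_d$-construction in Theorem \ref{intro-11}, because the $d$ coordinate directions have to be kept genuinely independent so that arbitrary finite index sets $\a$ --- not only \emph{intervals} --- are permitted; once this combinatorial--dynamical core is in place, the measure-theoretic step is a black-box application of IP multiple recurrence. (The secondary technical point is the $z\mapsto x_0$ transfer inside a $d$-step nilsystem used in $(3)\Rightarrow(1)$, which is handled as the corresponding step of \cite{HSY}.)
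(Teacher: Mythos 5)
This statement is stated in the paper as a \emph{conjecture}, not a theorem: the authors prove only the implication from $\F_{P_d}^*$- or $\F_{B_d}^*$-recurrence to $d$-step almost automorphy (as a corollary of Theorem \ref{intro-12}), and they record explicitly that the converse would follow either from the Ramsey property of $\F_{P_d}$ and $\F_{B_d}$ or from the inclusions $\F_{P_d}\subset\F_{B_d}\subset\F^*_{d,0}$, neither of which they can prove. Your cycle is really organized around the items of Theorem \ref{intro-12} (statements about $N(x,U)$ for neighborhoods $U$ of $y$) rather than those of the conjecture; reassembled, it amounts to the paper's route $\F_{P_d}\subseteq\F_{B_d}\subseteq\F_{d,0}^*$. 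The first inclusion is indeed the easy one (and note it yields $\F_{B_d}^*\subseteq\F_{P_d}^*$, i.e.\ the implication from $\F_{B_d}^*$-recurrence to $\F_{P_d}^*$-recurrence, the reverse of the direction you label). The second inclusion, $\F_{B_d}\subseteq\F_{d,0}^*$, is precisely the open problem the authors single out in their ``further questions,'' and your justification for it --- transferring a near-diagonal parallelepiped based at the auxiliary point $z$ to one based at $x_0$ ``by rigidity and homogeneity'' --- does not work. Already in a $2$-step nilsystem the return displacement seen from a base point $g\Gamma$ carries a second-step term of the form $na\,y_g$; if $\tau^n z$ is to be near $z$ then $na$ is near an integer $m$ that grows with $n$, so the discrepancy between the parallelepiped at $z$ and the one at $x_0$ contains $m(y_g-y_h)\pmod 1$, which is uncontrolled no matter how small $V$ is. This is exactly why the linear inclusion $\F_{Bir_d}\subset\F_{d,0}^*$ in \cite{HSY} is proved through generalized polynomials rather than by any pointwise transfer.

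The other load-bearing step, the ``$d$-dimensional IP-configuration'' upgrade of $(x,y)\in\RP^{[d]}$, is not merely unproved but false. Taking $E=\{1\}$ it asserts that $N(x,U)$ contains a full IP-set; an IP-limit along such a set produces an idempotent $u$ in the enveloping semigroup with $ux\in\overline U$ and $u(ux)=ux$, so $x$ is proximal to a point of $\overline U$. In a minimal distal system this forces $x\in\overline U$, yet distal systems (e.g.\ the skew product $(s,t)\mapsto(s+\a,t+s)$ on $\T^2$ for $d=1$, or higher-step nilsystems for general $d$) have nontrivial $\RP^{[d]}$ while their proximal relation is trivial. What is actually available from $(x,y)\in\RP^{[d]}$ is only the much weaker conclusion $N(x,U)\in\F_{SG_d}$ (Theorem \ref{huang12}): sums with gaps of length less than $d$, not all finite sums of $d$ independent IP-systems. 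The paper then extracts $N(x,U)\in\F_{P_d}$ from this via the Gillis-type counting argument (Propositions \ref{infinite-type} and \ref{longproof}), applied along sums of \emph{consecutive} blocks, precisely because arbitrary finite sums are unavailable. So your proposal reduces the conjecture to two assertions, one of which is the open problem itself with an invalid justification and the other of which is false; the conjecture remains open.
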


We note that there are two possible ways to show the conjecture: (1)
prove $\F_{P_d}$ and $\F_{B_d}$ have the Ramsey property, (2) prove
$\F_{P_d}\subset \F_{B_d}\subset \F^*_{d,0}$. Unfortunately, at this
moments we can not prove neither of them.

\subsection{Organization of the paper}

We organize the paper as follows: in Section 2, we give the basic
definitions and facts used in the paper. In Section 3, we study
Nil$_d$-Bohr$_0$ sets and higher order recurrence sets, and use them
to characterize $\RP^{[d]}$. In Section 4, we study $SG_d$ sets and
use them to characterize $\RP^{[d]}$. In Section 5, we introduce the
cubic version of multiple recurrence sets, and also use them to
characterize $\RP^{[d]}$. In the final section, we introduce the
notion of $d$-step almost automorphy and obtain various
characterizations. In the Appendix, we show $SG_2$ does not have the
Ramsey property, Theorem \ref{ShaoYe} holds for general compact
Hausdorff systems and the cubic version of the multiple Poincar\'e
and Birkhoff recurrence sets can be interpreted using
intersectiveness.

\section{Preliminaries}

\subsection{Measurable and topological dynamics}

In this subsection we give some basic notions in ergodic theory and
topological dynamics.

\subsubsection{Measurable systems}
In this paper, a {\em measure preserving system}  is a quadruple
$(X,\X, \mu, T)$, where $(X,\X,\mu )$ is a Lebesgue probability
space and $T : X \rightarrow X$ is an invertible measure preserving
transformation.

\medskip

We write $\I=\I (T)$ for the $\sigma$-algebra $\{A\in \X : T^{-1}A =
A\}$ of invariant sets. A system is {\em ergodic} if every
$T$-invariant set has measure either $0$ or $1$. $(X,\X, \mu, T)$ is
{\em weakly mixing} if the product system $(X\times X, \X\times \X,
\mu\times \mu, T\times T)$ is erdogic.

\subsubsection{Topological dynamical systems}

A {\em transformation} of a compact metric space X is a
homeomorphism of X to itself. A {\em topological dynamical system},
referred to more succinctly as just a t.d.s. or a {\em system}, is a
pair $(X, T)$, where $X$ is a compact metric space and $T : X
\rightarrow X$ is a transformation. We use $\rho (\cdot, \cdot)$ to
denote the metric on $X$.

A t.d.s. $(X, T)$ is {\em transitive} if $X$ is uncountable, and
there exists some point $x\in X$ whose orbit $\O(x,T)=\{T^nx: n\in
\Z\}$ is dense in $X$.
The system is {\em minimal} if the orbit of any
point is dense in $X$. This property is equivalent to saying that X
and the empty set are the only closed invariant sets in $X$.
A {\em factor} of a t.d.s. $(X, T)$ is another t.d.s. $(Y, S)$ such
that there exists a continuous and onto map $\phi: X \rightarrow Y$
satisfying $S\circ \phi = \phi\circ T$. In this case, $(X,T)$ is
called an {\em extension } of $(Y,S)$. The map $\phi$ is called a
{\em factor map}.

\subsubsection{} We also make use of a more general definition of a
measurable or topological system. That is, instead of just a single
transformation $T$, we consider commuting homeomorphisms $T_1,\ldots
, T_k$ of $X$ or a countable abelian group of transformations.

\subsection{Cubes and faces} In the following subsections, we will
introduce notions about cubes, faces and dynamical parallelepipeds.
For more details  see \cite{HK05, HKM, HM}.

\subsubsection{} Let $X$ be a set, let $d\ge 1$ be an integer, and write
$[d] = \{1, 2,\ldots , d\}$. We view $\{0, 1\}^d$ in one of two
ways, either as a sequence $\ep=\ep_1\ldots \ep_d$ of $0'$s and
$1'$s, or as a subset of $[d]$. A subset $\ep$ corresponds to the
sequence $(\ep_1,\ldots, \ep_d)\in \{0,1\}^d$ such that $i\in \ep$
if and only if $\ep_i = 1$ for $i\in [d]$. For example, ${\bf
0}=(0,0,\ldots,0)\in \{0,1\}^d$ is the same as $\emptyset \subset
[d]$.

Let $V_d=\{0,1\}^d=2^{[d]}$ and $V_d^*=V_d\setminus \{{\bf
0}\}=V_d\setminus \{\emptyset\}$. If ${\bf n} = (n_1,\ldots, n_d)\in
\Z^d$ and $\ep\in \{0,1\}^d$, we define
$${\bf n}\cdot \ep = \sum_{i=1}^d n_i\ep_i .$$
If we consider $\ep$ as $\ep\subset [d]$, then ${\bf n}\cdot \ep  =
\sum_{i\in \ep} n_i .$

\subsubsection{}

We denote $X^{2^d}$ by $X^{[d]}$. A point ${\bf x}\in X^{[d]}$ can
be written in one of two equivalent ways, depending on the context:
$${\bf x} = (x_\ep :\ep\in \{0,1\}^d )= (x_\ep : \ep\subset [d]). $$
Hence $x_\emptyset =x_{\bf 0}$ is the first coordinate of ${\bf x}$.
For example, points in $X^{[2]}$ are like
$$(x_{00},x_{10},x_{01},x_{11})=(x_{\emptyset}, x_{\{1\}},x_{\{2\}},x_{\{1,2\}}).$$

For $x \in X$, we write $x^{[d]} = (x, x,\ldots , x)\in  X^{[d]}$.
The diagonal of $X^{[d]}$ is $\D^{[d]} = \{x^{[d]}: x\in X\}$.
Usually, when $d=1$, denote the diagonal by $\D_X$ or $\D$ instead
of $\D^{[1]}$.

A point ${\bf x} \in X^{[d]}$ can be decomposed as ${\bf x} = ({\bf
x'},{\bf  x''})$ with ${\bf x}', {\bf x}''\in X^{[d-1]}$, where
${\bf x}' = (x_{\ep0} : \ep\in \{0,1\}^{d-1})$ and ${\bf x}''=
(x_{\ep1} : \ep\in \{0,1\}^{d-1})$. We can also isolate the first
coordinate, writing $X^{[d]}_* = X^{2^d-1}$ and then writing a point
${\bf x}\in X^{[d]}$ as ${\bf x} = (x_\emptyset, {\bf x}_*)$, where
${\bf x}_*= (x_\ep : \ep\neq \emptyset) \in X^{[d]}_*$.

\subsection{Dynamical parallelepipeds}

\begin{de}
Let $(X, T)$ be a t.d.s. and let $d\ge 1$ be an integer. We define
$\Q^{[d]}(X)$ to be the closure in $X^{[d]}$ of elements of the form
$$(T^{{\bf n}\cdot \ep}x=T^{n_1\ep_1+\ldots + n_d\ep_d}x: \ep=
(\ep_1,\ldots,\ep_d)\in\{0,1\}^d) ,$$ where ${\bf n} = (n_1,\ldots ,
n_d)\in \Z^d$ and $ x\in X$. When there is no ambiguity, we write
$\Q^{[d]}$ instead of $\Q^{[d]}(X)$. An element of $\Q^{[d]}(X)$ is
called a (dynamical) {\em parallelepiped of dimension $d$}.
\end{de}

As examples, $\Q^{[2]}$ is the closure in $X^{[2]}=X^4$ of the set
$$\{(x, T^mx, T^nx, T^{n+m}x) : x \in X, m, n\in \Z\}$$ and $\Q^{[3]}$
is the closure in $X^{[3]}=X^8$ of the set $$\{(x, T^mx, T^nx,
T^{m+n}x, T^px, T^{m+p}x, T^{n+p}x, T^{m+n+p}x) : x\in X, m, n, p\in
\Z\}.$$

\begin{de}
Let $\phi: X\rightarrow Y$ and $d\in \N$. Define $\phi^{[d]}:
X^{[d]}\rightarrow Y^{[d]}$ by $(\phi^{[d]}{\bf x})_\ep=\phi x_\ep$
for every ${\bf x}\in X^{[d]}$ and every $\ep\subset [d]$.

Let $(X, T)$ be a system and $d\ge 1$ be an integer. The {\em
diagonal transformation} of $X^{[d]}$ is the map $T^{[d]}$.
\end{de}

\begin{de}
{\em Face transformations} are defined inductively as follows: Let
$T^{[0]}=T$, $T^{[1]}_1=\id \times T$. If
$\{T^{[d-1]}_j\}_{j=1}^{d-1}$ is defined already, then set
$$T^{[d]}_j=T^{[d-1]}_j\times T^{[d-1]}_j, \ j\in \{1,2,\ldots, d-1\},$$
$$T^{[d]}_d=\id ^{[d-1]}\times T^{[d-1]}.$$
\end{de}


The {\em face group} of dimension $d$ is the group $\F^{[d]}(X)$ of
transformations of $X^{[d]}$ spanned by the face transformations.
The {\em parallelepiped group} of dimension $d$ is the group
$\G^{[d]}(X)$ spanned by the diagonal transformation and the face
transformations. We often write $\F^{[d]}$ and $\G^{[d]}$ instead of
$\F^{[d]}(X)$ and $\G^{[d]}(X)$, respectively. For $\G^{[d]}$ and
$\F^{[d]}$, we use similar notations to that used for $X^{[d]}$:
namely, an element of either of these groups is written as $S =
(S_\ep : \ep\in\{0,1\}^d)$. In particular, $\F^{[d]} =\{S\in
\G^{[d]}: S_\emptyset ={\rm id}\}$.

\medskip

For convenience, we denote the orbit closure of ${\bf x}\in X^{[d]}$
under $\F^{[d]}$ by $\overline{\F^{[d]}}({\bf x})$, instead of
$\overline{\O({\bf x}, \F^{[d]})}$.

It is easy to verify that $\Q^{[d]}$ is the closure in $X^{[d]}$ of
$$\{Sx^{[d]} : S\in \F^{[d]}, x\in X\}.$$
If $x$ is a transitive point of $X$, then $\Q^{[d]}$ is the closed
orbit of $x^{[d]}$ under the group $\G^{[d]}$.

\subsection{Nilmanifolds and nilsystems}




\subsubsection{Nilpotent groups}

Let $G$ be a group. For $g, h\in G$, we write $[g, h] =
ghg^{-1}h^{-1}$ for the commutator of $g$ and $h$ and we write
$[A,B]$ for the subgroup spanned by $\{[a, b] : a \in A, b\in B\}$.
The commutator subgroups $G_j$, $j\ge 1$, are defined inductively by
setting $G_1 = G$ and $G_{j+1} = [G_j ,G]$. Let $k \ge 1$ be an
integer. We say that $G$ is {\em $k$-step nilpotent} if $G_{k+1}$ is
the trivial subgroup.

\subsubsection{Nilmanifolds}

Let $G$ be a $k$-step nilpotent Lie group and $\Gamma$ a discrete
cocompact subgroup of $G$. The compact manifold $X = G/\Gamma$ is
called a {\em $k$-step nilmanifold}. The group $G$ acts on $X$ by
left translations and we write this action as $(g, x)\mapsto gx$.
The Haar measure $\mu$ of $X$ is the unique probability measure on
$X$ invariant under this action. Let $\tau\in G$ and $T$ be the
transformation $x\mapsto \tau x$ of $X$. Then $(X, T, \mu)$ is
called a {\em  $k$-step nilsystem}.

\subsubsection{$d$-step nilsystem and system of order $d$}

We also make use of inverse limits of nilsystems and so we recall
the definition of an inverse limit of systems (restricting ourselves
to the case of sequential inverse limits). If $(X_i,T_i)_{i\in \N}$
are systems with $diam(X_i)\le M<\infty$ and $\phi_i:
X_{i+1}\rightarrow X_i$ are factor maps, the {\em inverse limit} of
the systems is defined to be the compact subset of $\prod_{i\in
\N}X_i$ given by $\{ (x_i)_{i\in \N }: \phi_i(x_{i+1}) = x_i,
i\in\N\}$, which is denoted by $\displaystyle
\lim_{\longleftarrow}\{X_i\}_{i\in \N}$. It is a compact metric
space endowed with the distance $\rho(x, y) =\sum_{i\in \N} 1/2^i
\rho_i(x_i, y_i )$. We note that the maps $\{T_i\}$ induce a
transformation $T$ on the inverse limit.

\begin{de}[Host-Kra-Maass]\cite{HKM}\label{HKM}
A transitive t.d.s. $(X,T)$ is called a {\em system of order $d$},
if it is an inverse limit of $d$-step minimal nilsystems.
\end{de}

\subsection{Families and filters}

\subsubsection{Furstenberg families}

We say that a collection $\F$ of subsets of $\Z$ is a {\em a family}
if it is hereditary upward, i.e. $F_1 \subseteq F_2$ and $F_1 \in
\F$ imply $F_2 \in \F$. A family $\F$ is called {\em proper} if it
is neither empty nor the entire power set of $\Z$, or, equivalently
if $\Z \in \F$ and $\emptyset \not\in \F$. Any nonempty collection
$\mathcal{A}$ of subsets  of $\Z$ generates a family
$\F(\mathcal{A}) :=\{F \subseteq \Z:F \supset A$ for some $A \in
\mathcal{A}\}$.

For a family $\F$ its {\em dual} is the family $\F^{\ast}
:=\{F\subseteq \Z  : F \cap F' \neq \emptyset \ \text{for  all} \ F'
\in \F \}$. It is not hard to see that $\F^*=\{F\subset\Z:
\Z\setminus F\not\in \F\}$, from which we have that if $\F$ is a
family then $(\F^*)^*=\F.$ For more details, see \cite{Ak}.

\subsubsection{Filter and the Ramsey property}

If a family $\F$ is closed under finite intersections and is proper,
then it is called a {\em filter}.

A family $\F$ has {\em the Ramsey property} if $A=A_1\cup A_2\in \F$
then $A_1\in \F$ or $A_2\in \F$. It is well known that a proper
family has the Ramsey property if and only if its dual $\F^*$ is a
filter \cite{F}.

\subsubsection{Some important families}

A subset $S$ of $\Z$ is {\it syndetic} if it has a bounded gaps,
i.e. there is $N\in \N$ such that $\{i,i+1,\cdots,i+N\} \cap S \neq
\emptyset$ for every $i \in {\Z}$. The collection of all syndetic
subsets is denoted by $\F_s$.

Let $S$ be a subset of $\mathbb{Z}$. The {\it upper Banach density}
and {\it lower Banach density} of $S$ are
$$BD^*(S)=\limsup_{|I|\to \infty}\frac{|S\cap I|}{|I|},\ \text{and}\
BD_*=\liminf_{|I|\to \infty}\frac{|S\cap I|}{|I|},$$ where $I$
ranges over intervals of $\mathbb{Z}$, while the {\it upper density}
of $S$ is
$$D^*(S)=\limsup_{n\to \infty}\frac{|S\cap [-n,n]|}{2n+1}.$$


Let $\{ b_i \}_{i\in I}$ be a finite or infinite sequence in
$\mathbb{Z}$. One defines $$FS(\{ b_i \}_{i\in I})=\Big\{\sum_{i\in
\alpha} b_i: \alpha \text{ is a finite non-empty subset of } I\Big
\}.$$ $F$ is an {\it IP set} if it contains some
$FS({\{p_i\}_{i=1}^{\infty}})$, where $p_i\in\Z$. The collection of
all IP sets is denoted by $\F_{ip}$.
If $I$ is finite, then one says $FS(\{ p_i \}_{i\in I})$ is an {\em
finite IP set}. The collection of all sets containing finite IP sets
with arbitrarily long lengths is denoted by $\F_{fip}$.

\subsection{Regionally proximal pairs of order $d$}

First recall the definition of regionally proximal pairs of order
$d$. Let $(X, T)$ be a t.d.s. and let $d\ge 1$ be an integer. A pair
$(x, y) \in X\times X$ is said to be {\em regionally proximal of
order $d$} if for any $\d  > 0$, there exist $x', y'\in X$ and a
vector ${\bf n} = (n_1,\ldots , n_d)\in\Z^d$ such that $\rho (x, x')
< \d, \rho (y, y') <\d$, and
$$\rho(T^{{\bf n}\cdot \ep}x', T^{{\bf n}\cdot \ep}y') < \d\
\text{for any nonempty $\ep\subset [d]$}.$$ The set of regionally
proximal pairs of order $d$ is denoted by $\RP^{[d]}$ (or by
$\RP^{[d]}(X)$ in case of ambiguity), which is called {\em the
regionally proximal relation of order $d$}.

Moreover, let $\RP^{[\infty]}=\bigcap_{d=1}^\infty\RP^{[d]}(X)$. The
following theorem was proved by Host-Kra-Maass for minimal distal
systems \cite{HKM} and by Shao-Ye \cite{SY} for the general minimal
systems.

\begin{thm}\label{ShaoYe}
Let $(X, T)$ be a minimal t.d.s. and $d\in \N$. Then
\begin{enumerate}
\item $(x,y)\in \RP^{[d]}$ if and only if $(x,y,y,\ldots,y)=(x,y^{[d+1]}_*) \in
\overline{\F^{[d+1]}}(x^{[d+1]})$ if and only if $(x,x^{[d]}_*, y,
x^{[d]}_*) \in \overline{\F^{[d+1]}}(x^{[d+1]})$.

\item $(\overline{\F^{[d]}}(x^{[d]}),\F^{[d]})$ is minimal for all
$x\in X$.

\item $\RP^{[d]}(X)$ is an equivalence relation, and so is
$\RP^{[\infty]}.$

\item If $\pi:(X,T)\lra (Y,S)$ is a factor map, then $(\pi\times
\pi)(\RP^{[d]}(X))=\RP^{[d]}(Y).$

\item $(X/\RP^{[d]},T)$ is the maximal nilfactor of $(X,T)$.
\end{enumerate}\end{thm}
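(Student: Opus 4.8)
The plan is to prove the five items not in the stated order but in the order (2), (1), (3), (4), (5), since each builds on the preceding ones, with the cube sets $\Q^{[d]}(X)$ and the face group $\F^{[d]}$ as the main objects and the enveloping semigroup $E(X,T)$ as the main tool. This is the Host--Kra--Maass / Shao--Ye structure theorem, so I would follow their strategy: reduce everything to a statement about the face‑group dynamics on cube spaces, and handle the non‑distal case via idempotents in $E(X,T)$ rather than distality.

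\emph{Item (2): minimality of $(\overline{\F^{[d]}}(x^{[d]}),\F^{[d]})$.} I would prove this by induction on $d$, and I expect it to be the main obstacle. The case $d=1$ is immediate from minimality of $(X,T)$, since $\overline{\F^{[1]}}(x,x)=\{x\}\times X$. For the inductive step one writes a cube $\mathbf{y}=(\mathbf{y}',\mathbf{y}'')\in\overline{\F^{[d]}}(x^{[d]})$ with $\mathbf{y}',\mathbf{y}''\in X^{[d-1]}$; the induction hypothesis governs $\mathbf{y}'$ and $\mathbf{y}''$ separately under the ``lower'' face group, but in order to drive $x^{[d]}$ back to $\mathbf{y}$ one needs a genuinely dynamical recurrence input. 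This is exactly where the distal case (HKM) and the general minimal case (SY) diverge: instead of transporting base points by distality, one passes to $E(X,T)$, replaces $x$ by an almost periodic point proximal to it (Auslander--Ellis), picks a minimal idempotent $u$ with $ux=x$, and exploits that $u$ acts as the identity on a residual set while collapsing proximal cells. Combining $u$ with minimality of $(X,T)$ produces elements of $\F^{[d]}$ sending $x^{[d]}$ arbitrarily close to $\mathbf{y}$ and back again; the delicate point is the bookkeeping ensuring the coordinates converge in the right order, and this is the crux of the whole theorem.

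\emph{Item (1): the orbit‑closure description of $\RP^{[d]}$.} Granting item (2), the implication ``$(x,y^{[d+1]}_*)\in\overline{\F^{[d+1]}}(x^{[d+1]})\Rightarrow(x,y)\in\RP^{[d]}$'' is essentially unwinding the definition: a point of $\overline{\F^{[d+1]}}(x^{[d+1]})$ is a limit of $(T^{\mathbf{m}_k\cdot\epsilon}x:\epsilon\in\{0,1\}^{d+1})$, and if this limit is $(x,y,\ldots,y)$ then taking $x'=x$, $y'=T^{m_{k,d+1}}x$ and $\mathbf{n}=(m_{k,1},\ldots,m_{k,d})$ for large $k$ gives a vector witnessing regional proximality of order $d$. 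For the converse, a witness $(x',y',\mathbf{n})$ of regional proximality of order $d$ produces, after passing to subnets, a point of $\Q^{[d+1]}(X)$ whose $\emptyset$‑coordinate is $x$; one then uses minimality from item (2) together with the closure properties of $\Q^{[d+1]}(X)$ (which permit gluing cubes and collapsing faces) to straighten this to an honest element of $\overline{\F^{[d+1]}}(x^{[d+1]})$ of the exact form $(x,y^{[d+1]}_*)$. A symmetric manipulation of cubes, interchanging the roles of the ``low'' and ``high'' faces, yields the second description $(x,x^{[d]}_*,y,x^{[d]}_*)$.

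\emph{Items (3), (4), (5).} Reflexivity and symmetry of $\RP^{[d]}$ are immediate from the definition, and transitivity follows formally from item (1): if $(x,y),(y,z)\in\RP^{[d]}$, combine $(x,x^{[d]}_*,y,x^{[d]}_*)\in\overline{\F^{[d+1]}}(x^{[d+1]})$ with $(y,z^{[d+1]}_*)\in\overline{\F^{[d+1]}}(y^{[d+1]})$ and glue along $\F^{[d+1]}$‑orbits, using minimality to return over the base point $x$; the same argument handles $\RP^{[\infty]}$ since it is an intersection of the $\RP^{[d]}$. For item (4), $(\pi\times\pi)(\RP^{[d]}(X))\subseteq\RP^{[d]}(Y)$ is immediate from uniform continuity of $\pi$, and the reverse inclusion is a lifting argument: by item (1) a pair $(y_1,y_2)\in\RP^{[d]}(Y)$ corresponds to a point of $\overline{\F^{[d+1]}}(y_1^{[d+1]})$, which is the image under $\pi^{[d+1]}$ of $\overline{\F^{[d+1]}}(x_1^{[d+1]})$ for any lift $x_1$ of $y_1$; lifting that point and then using minimality (item (2)) upstairs to make all its nonzero coordinates equal to a single $x_2\in\pi^{-1}(y_2)$ gives $(x_1,x_2)\in\RP^{[d]}(X)$ projecting to $(y_1,y_2)$. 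Finally, for item (5): by item (3) the quotient $Z=X/\RP^{[d]}(X)$ is a well‑defined minimal system, and item (4) applied to $\pi\colon X\to Z$ gives $\RP^{[d]}(Z)=(\pi\times\pi)(\RP^{[d]}(X))=\D_Z$; invoking the Host--Kra--Maass structure theorem that a minimal system with trivial $\RP^{[d]}$ is a system of order $d$ (the nontrivial direction being that $\RP^{[d]}(Z)=\D_Z$ forces $Z$ to be an inverse limit of $d$‑step nilsystems), $Z$ is such a system. Maximality is again item (4): any factor map $\pi'\colon X\to W$ onto a system of order $d$ has $\RP^{[d]}(W)=\D_W$, hence $\RP^{[d]}(X)\subseteq(\pi'\times\pi')^{-1}(\D_W)$, so $\pi'$ factors through $Z$. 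The only input beyond items (1)--(4) is that external HKM characterization of systems of order $d$; everything else is bootstrapped from the cube machinery of item (2).
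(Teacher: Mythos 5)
First, a point of reference: the paper does not prove this theorem at all. It is imported verbatim from Host--Kra--Maass \cite{HKM} (distal case) and Shao--Ye \cite{SY} (general minimal case), with only a remark that the arguments transfer to compact Hausdorff spaces. So there is no internal proof to compare against; the comparison has to be with the strategy of \cite{SY}, and your outline does correctly name that strategy --- reduce everything to the face-group dynamics on the cube spaces, prove minimality of $(\overline{\F^{[d]}}(x^{[d]}),\F^{[d]})$ first, and in the non-distal case replace transport-by-distality with minimal idempotents in the enveloping semigroup.

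However, as a proof the proposal has a genuine gap, and you have located it yourself: the inductive step of item (2). Everything else in your outline (the easy direction of (1), reflexivity and symmetry in (3), the inclusion $(\pi\times\pi)(\RP^{[d]}(X))\subseteq\RP^{[d]}(Y)$ in (4)) is routine, but the hard content --- that for a general minimal, possibly non-distal system the face-group orbit closure of $x^{[d]}$ is minimal --- is dispatched with ``picks a minimal idempotent $u$ with $ux=x$ \ldots the delicate point is the bookkeeping.'' That bookkeeping is the theorem: in \cite{SY} it occupies the bulk of the paper and requires constructing auxiliary minimal subsets of $X^{[d]}$ under larger groups, not just a single idempotent acting coordinatewise (an idempotent $u\in E(X,T)$ acts diagonally on $X^{[d]}$ and does not by itself produce face-group elements returning a cube to $x^{[d]}$). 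Similarly, the ``straightening'' in the converse of item (1), the step in item (4) where a lifted cube with all nonzero coordinates in $\pi^{-1}(y_2)$ is upgraded to one with all nonzero coordinates \emph{equal}, and the invocation in item (5) of the HKM characterization of systems of order $d$ are each substantial external inputs that you describe rather than prove. So the architecture is right, but the proposal is a roadmap to \cite{HKM,SY}, not a self-contained argument; if you intend to cite those papers for the crux, you should say so explicitly, which is exactly what the paper under review does.
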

Note that (5) means that there is $d\in\N$ such that
$(X/\RP^{[d]},T)$ is a system of order $d$ and any system of order
$d$ factor of $(X,T)$ is a factor of $(X/\RP^{[d]},T)$.

\begin{rem}
In \cite{SY}, Theorem \ref{ShaoYe} was proved for compact metric
spaces. In fact, one can show that Theorem \ref{ShaoYe} holds for
compact Hausdorff spaces by repeating the proofs sentence by
sentence in \cite{SY}. However, we will describe a direct approach
in Appendix \ref{appendix:CT2}. This result will be used in the next
section.
\end{rem}

\section{Nil$_d$ Bohr$_0$-sets, Poincar\'e sets and $\RP^{[d]}$}

In this section using  results obtained in \cite{HSY} we
characterize $\RP^{[d]}$ using the families $\F_{Poi_d}, \F_{Bir_d}$
and $\F_{d,0}^*$.

\subsection{Nil-Bohr sets}

Recall $\F_{d,0}$ is the family generated by all Nil$_d$
Bohr$_0$-sets.


\medskip

For $F_1,F_2\in \F_{d,0}$, there are $d$-step nilsystems $(X,T)$,
$(Y,S)$, $(x,y)\in X\times Y$ and $U\times V$ neighborhood of
$(x,y)$ such that $N(x,U)\subset F_1$ and $N(y,V)\subset F_2$. It is
clear that $N(x,U)\cap N(y, V)=N((x,y), U\times V)\in \F_{d,0}$.
This implies that $F_1\cap F_2\in\F_{d,0}$. So we conclude that

\begin{prop}
Let $d \in \N$. Then $\F_{d,0}$ is a filter, and $\F_{d,0}^*$ has
the Ramsey property.
\end{prop}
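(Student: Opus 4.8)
The plan is to verify directly that $\F_{d,0}$ is closed under finite intersections and is proper, and then invoke the standard fact already recalled in the preliminaries that a proper family has the Ramsey property for its dual if and only if the family itself is a filter.

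First I would establish that $\F_{d,0}$ is closed under finite intersections. Take $F_1, F_2 \in \F_{d,0}$. By definition of the family generated by Nil$_d$ Bohr$_0$-sets, there exist $d$-step nilsystems $(X,T)$ and $(Y,S)$, points $x_0 \in X$, $y_0 \in Y$, and open neighborhoods $U \ni x_0$, $V \ni y_0$ with $N(x_0,U) \subseteq F_1$ and $N(y_0,V) \subseteq F_2$. The key observation is that the product system $(X \times Y, T \times S)$ is again a $d$-step nilsystem: if $X = G/\Gamma$ and $Y = G'/\Gamma'$ with $G$, $G'$ being $d$-step nilpotent, then $G \times G'$ is $d$-step nilpotent and $X \times Y = (G\times G')/(\Gamma \times \Gamma')$, with the translation given by the product of the two translating elements. (One should note that an inverse limit point is not needed here since Nil$_d$ Bohr$_0$-sets are defined directly via $d$-step nilsystems, not systems of order $d$; but even for inverse limits the product remains an inverse limit of products of $d$-step nilsystems.) Then $(x_0,y_0) \in X \times Y$ has open neighborhood $U \times V$, and
$$ N((x_0,y_0), U\times V) = \{n \in \Z: (T\times S)^n(x_0,y_0) \in U\times V\} = N(x_0,U)\cap N(y_0,V). $$
Hence $N(x_0,U)\cap N(y_0,V)$ is a Nil$_d$ Bohr$_0$-set contained in $F_1 \cap F_2$, so $F_1 \cap F_2 \in \F_{d,0}$.

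Next I would check properness: $\emptyset \notin \F_{d,0}$ because every Nil$_d$ Bohr$_0$-set $N(x_0,U)$ contains $0$ (as $T^0 x_0 = x_0 \in U$), so no set in $\F_{d,0}$ is empty; and $\Z \in \F_{d,0}$ since one may take the trivial one-point nilsystem (or simply $U = X$), giving $N(x_0,X) = \Z$. Since $\F_{d,0}$ is by construction hereditary upward (it is generated as a family), these three facts say precisely that $\F_{d,0}$ is a filter. Finally, by the fact recalled in the subsection on filters and the Ramsey property — a proper family has the Ramsey property if and only if its dual is a filter, equivalently (taking duals, using $(\F^*)^* = \F$) the dual of a filter has the Ramsey property — we conclude that $\F_{d,0}^*$ has the Ramsey property.

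I do not expect a genuine obstacle here; the only point requiring a moment's care is the closure of the class of $d$-step nilsystems under finite products, which is the substantive content and is handled by the product-of-nilmanifolds construction above. Everything else is bookkeeping with the definitions of family, filter, and dual as set up in the preliminaries.
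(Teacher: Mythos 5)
Your proposal is correct and follows essentially the same route as the paper: realize $F_1,F_2$ via return-time sets in two $d$-step nilsystems, pass to the product system, and observe $N(x_0,U)\cap N(y_0,V)=N((x_0,y_0),U\times V)$. The paper leaves the closure of $d$-step nilsystems under products and the properness check implicit, which you have simply spelled out.
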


\subsection{Sets of $d$-recurrence}

\subsubsection{}

Recall that for $d\in \N$, $\F_{Poi_d}$ (resp. $\F_{Bir_d}$) is the
family generated by the collection of all sets of $d$-recurrence
(resp. sets of $d$-topological recurrence).

\begin{rem} It is known that for all integer $d\ge 2$ there exists a set of
$(d-1)$-recurrence that is not a set of $d$-recurrence \cite{FLW}.
This also follows from Theorem~ \ref{intro-10}.
\end{rem}

Recall that a set $S\subseteq \Z$ is {\em $d$-intersective} if every
subset $A$ with positive density contains at least one arithmetic
progression of length $d+1$ and a common difference in $S$, i.e.
there is some $n\in S$ such that
$$A\cap (A-n)\cap (A-2n)\ldots\cap (A-dn)\neq \emptyset.$$
Similarly, one can define topological $d$-intersective set by
replacing the set with positive density by a syndetic set in the
above definition.

We now give some equivalence conditions of $d$-topological
recurrence.
\begin{prop}\label{birkhoff-equi1} The following statements are equivalent:
\begin{enumerate}
\item $S\subset \Z$ is a set of topological $d$-intersective.

\item $S\subset \Z$ is a set of {$d$-topological recurrence}.

\item For any t.d.s. $(X,T)$ there are $x\in X$ and
$\{n_i\}_{i=1}^\infty\subset S$ such that
$$\lim_{i\lra +\infty}T^{jn_i}x=x\ \text{for each}\ 1\le j\le d.$$
\end{enumerate}
\end{prop}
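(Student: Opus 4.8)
The plan is to prove Proposition~\ref{birkhoff-equi1} by establishing the cycle of implications $(2)\Rightarrow(3)\Rightarrow(2)$ together with $(1)\Leftrightarrow(2)$, treating the equivalence of $d$-topological recurrence with its ``multiple return of a single point'' reformulation as the heart of the matter and the equivalence with topological $d$-intersectivity as a soft unwinding of definitions.

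First I would prove $(2)\Rightarrow(3)$. Given a t.d.s. $(X,T)$, pass to a minimal subsystem; by minimality every nonempty open set $U$ satisfies $U\cap T^{-n_1}U\cap\cdots\cap T^{-dn_1}U\neq\emptyset$ for some $n_1\in S$. The idea is to run a standard nested-open-set / Baire category argument: fix a countable base $\{U_k\}$ of $X$, and for each finite subset of the base build, using property $(2)$ applied to shrinking open sets, a point whose orbit returns close to itself simultaneously at times $jn_i$, $1\le j\le d$. Concretely, one constructs a decreasing sequence of nonempty open sets $V_i$ with $\overline{V_{i+1}}\subset V_i$, $\mathrm{diam}(V_i)\to 0$, and integers $n_i\in S$ with $V_i\cap\bigcap_{j=1}^d T^{-jn_i}V_i\neq\emptyset$ (apply $(2)$ to the open set $V_i$ to get $n_i$, then choose $V_{i+1}$ inside $V_i\cap\bigcap_{j=1}^d T^{-jn_i}V_i$); the unique point $x\in\bigcap_i\overline{V_i}$ then satisfies $T^{jn_i}x\in V_i$ for all $1\le j\le d$, so $T^{jn_i}x\to x$. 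One must also arrange $n_i\to+\infty$, which follows by additionally requiring at stage $i$ that the already-chosen $n_1,\dots,n_{i-1}$ be excluded — possible since $S$ witnessing recurrence for $V_i$ can be taken with $|n_i|$ as large as we like (if $S$ were finite below some bound we could intersect with a cofinite set; more carefully, if only finitely many $n$ worked for all small $V$ we would contradict $(2)$ applied to still smaller sets). Then $(3)\Rightarrow(2)$ is easy: given minimal $(X,T)$ and nonempty open $U$, take $x\in U$ and an interval-free... rather, take the point $x$ and sequence $\{n_i\}\subset S$ from $(3)$ for the system $(X,T)$; for $i$ large enough $T^{jn_i}x$ is within $\rho$-distance of $x$ for all $1\le j\le d$ where $\rho$ is chosen so that the $\rho$-ball around $x$ lies in $U$, hence $x\in U\cap T^{-n_i}U\cap\cdots\cap T^{-dn_i}U$, so this set is nonempty and $n_i\in S$.

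Next I would handle $(1)\Leftrightarrow(2)$. For $(2)\Rightarrow(1)$: let $A$ be syndetic; I want $n\in S$ with $A\cap(A-n)\cap\cdots\cap(A-dn)\neq\emptyset$. Form the subshift $X_A=\overline{\O(\mathbf{1}_A,\sigma)}\subset\{0,1\}^{\Z}$ and a minimal subsystem $Y\subset X_A$; since $A$ is syndetic, the cylinder $U=[1]_0\cap Y$ (points with a $1$ in coordinate $0$) is nonempty, indeed every point of $Y$ has a $1$ within a bounded window, which lets one transfer a return $U\cap T^{-n}U\cap\cdots\cap T^{-dn}U\neq\emptyset$ in $Y$ back to an arithmetic progression of length $d+1$ with common difference $n$ inside $A$ (using syndeticity to control the bounded shift needed to land back in $A$, exactly as in the classical Furstenberg correspondence for topological recurrence). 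For $(1)\Rightarrow(2)$: given minimal $(X,T)$ and nonempty open $U$, pick $x_0\in U$ and a small ball $B\subset U$ around $x_0$; the set $A=N(x_0,B)=\{n: T^nx_0\in B\}$ is syndetic by minimality, so by $(1)$ there is $n\in S$ and $m$ with $m,m+n,\dots,m+dn\in A$, i.e. $T^{m+jn}x_0\in B\subset U$ for $0\le j\le d$; then $T^mx_0\in U\cap T^{-n}U\cap\cdots\cap T^{-dn}U$, as desired.

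The main obstacle I anticipate is the $(2)\Rightarrow(3)$ direction, specifically making the simultaneous nested construction genuinely yield one point that returns at \emph{all} the scaled times $n_i,2n_i,\dots,dn_i$ for a \emph{single} increasing sequence $\{n_i\}$ — one must be careful that shrinking $V_i$ to fit inside $V_i\cap\bigcap_{j=1}^d T^{-jn_i}V_i$ does not destroy the returns secured at earlier stages, which is why one takes closures and diameters tending to zero so that $T^{jn_i}x\in V_i$ persists in the limit; and one must separately ensure $n_i\to+\infty$, which requires knowing that property $(2)$ can be realized with arbitrarily large return times (otherwise replace $(X,T)$ by a product with an irrational rotation, or simply note that if $n$ is a valid return time for $U$ then it is a valid return time for every smaller open set only finitely often unless arbitrarily large ones also work — the cleanest fix is to observe that $S\setminus F$ still witnesses $d$-topological recurrence for any finite $F$, since removing finitely many integers from a set of $d$-topological recurrence leaves a set of $d$-topological recurrence, which itself follows from $(3)$ applied to the suspension or more directly from minimality). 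The equivalences $(1)\Leftrightarrow(2)$ are routine applications of the topological Furstenberg correspondence principle and should present no real difficulty beyond bookkeeping with the syndeticity constant.
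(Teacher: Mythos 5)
Your overall strategy matches the paper's. For $(2)\Rightarrow(3)$ the paper passes to a minimal subsystem and intersects the dense open sets $W_j=\{x\in X:\exists\, n\in S \text{ with } \rho(T^{kn}x,x)<1/j,\ 1\le k\le d\}$, which is exactly your nested-open-set construction phrased as a Baire category argument; for $(1)\Leftrightarrow(2)$ the paper simply cites \cite{FLW, F81}, and the subshift/correspondence argument you sketch in its place is correct.

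There is, however, one genuine gap, in $(3)\Rightarrow(2)$. The point $x$ produced by $(3)$ is not at your disposal: it need not lie in the given open set $U$, so the phrase ``the $\rho$-ball around $x$ lies in $U$'' is unjustified and the conclusion $x\in U\cap T^{-n_i}U\cap\cdots\cap T^{-dn_i}U$ fails in general. The paper repairs exactly this point using minimality: choose $l$ with $T^l x\in U$, set $V=T^{-l}U$ (so $x\in V$), deduce $V\cap T^{-n_{i}}V\cap\cdots\cap T^{-dn_{i}}V\neq\emptyset$ for large $i$ from $T^{jn_i}x\to x$, and apply $T^{l}$ to transfer the nonempty intersection back to $U$. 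Separately, your digression about forcing $n_i\to+\infty$ in $(2)\Rightarrow(3)$ is moot, since statement $(3)$ does not require it; moreover the fact you invoke there --- that deleting a finite set from a set of $d$-topological recurrence preserves the property --- is false under this paper's convention, which does not exclude $n=0$ (so $\{0\}$ is a set of $d$-topological recurrence whose nonzero part is empty). Neither issue affects the parts of the argument you actually need once the $(3)\Rightarrow(2)$ step is patched as above.
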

\begin{proof} The equivalence between (1) and (2) was proved in \cite{FLW,
F81}.

$(2)\Rightarrow(3)$. Now assume that whenever $(Y, S)$ is a minimal
t.d.s. and $V\subseteq Y$ a nonempty open set, then there is $n\in
S$ such that
$$V\cap T^{-n}V\cap \ldots\cap T^{-dn}V\not=\emptyset.$$
Let $(X, T)$ be a t.d.s., and without loss of generality we assume
that $(X,T)$ is minimal, since each t.d.s. contains a minimal
subsystem. Define for each $j\in\N$
$$W_j=\{x\in X: \exists\ n\in S\
\text{with}\ d(T^{kn}x,x)<\frac{1}{j}\ \text{for each}\ 1\le k\le
d\}.$$ Then it is easy to verify that $W_j$ is non-empty, open and
dense. Then any $x\in \bigcap_{j=1}^\infty W_j$ is the point we look
for.

$(3)\Rightarrow(2).$  Let $(X, T)$ be a minimal t.d.s. and
$U\subseteq X$ a nonempty open set. Then there are $x\in X$ and
$\{n_i\}_{i=1}^\infty\subset S$ such that for each given $1\le k\le
d$, $T^{kn_i}x\lra x$. Since $(X,T)$ is minimal, there is some $l\in
\Z$ such that $x\in V=T^{-l}U$. When $i_0$ is larger enough, we have
$V \cap T^{-n_{i_0}}V\cap \ldots \cap T^{-dn_{i_0}}V\neq \emptyset$,
which implies that $U\cap T^{-n}U\cap \ldots \cap T^{-dn}U\neq
\emptyset$ by putting $n=n_{i_0}$.

\end{proof}

\subsubsection{}

The following fact follows from the Poincar\'e and Birkhoff multiple
recurrent theorems.
\begin{prop}\label{ramseyp}
For all $d\in \N$, $\F_{Poi_d}$ and $\F_{Bir_d}$ have the Ramsey
property.
\end{prop}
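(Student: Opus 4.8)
The plan is to show that $\F_{Poi_d}^* $ and $\F_{Bir_d}^*$ are filters, which by the characterization recalled in the preliminaries is equivalent to $\F_{Poi_d}$ and $\F_{Bir_d}$ having the Ramsey property. Since both families are proper (they contain $\Z$ and not $\emptyset$, as the multiple Poincar\'e and Birkhoff recurrence theorems quoted above guarantee that any set of $d$-recurrence and any set of $d$-topological recurrence is nonempty), it suffices to check that each is closed under finite intersections. Equivalently, working directly with the generating collections: it suffices to prove that if $A = A_1 \cup A_2$ is a set of $d$-recurrence, then $A_1$ or $A_2$ is a set of $d$-recurrence, and similarly for $d$-topological recurrence. (A set in $\F_{Poi_d}$ contains a set of $d$-recurrence, and a partition of it induces a partition of that smaller set, so the Ramsey property for the generated family reduces to the statement for the generating collection.)

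For the measurable case, suppose $A = A_1 \cup A_2$ is a set of $d$-recurrence but neither $A_1$ nor $A_2$ is. Then there are measure preserving systems $(X_i,\X_i,\mu_i,T_i)$ and sets $B_i \in \X_i$ with $\mu_i(B_i) > 0$, $i=1,2$, such that $\mu_i(B_i \cap T_i^{-n}B_i \cap \dots \cap T_i^{-dn}B_i) = 0$ for every $n \in A_i$. The natural move is to pass to the product system $(X_1 \times X_2, \mu_1 \times \mu_2, T_1 \times T_2)$ with the set $B = B_1 \times B_2$, which has positive measure. For any $n$, the set $B \cap (T_1\times T_2)^{-n}B \cap \dots \cap (T_1 \times T_2)^{-dn}B$ is the product of $B_1 \cap T_1^{-n}B_1 \cap \dots \cap T_1^{-dn}B_1$ with the corresponding set for $B_2$; hence its measure is $0$ whenever $n \in A_1$ or $n \in A_2$, i.e. whenever $n \in A$. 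This contradicts $A$ being a set of $d$-recurrence. The topological case is handled the same way, replacing measure preserving systems by minimal t.d.s., positive-measure sets by nonempty open sets, and using that a product of minimal systems need not be minimal but still contains a minimal subsystem meeting the open product set; alternatively one uses the formulation in Proposition~\ref{birkhoff-equi1}(3) and takes a point recurrent along a sequence drawn from $A$, which must fall in $A_1$ or $A_2$ infinitely often — actually the cleanest route is again the product argument together with passing to a minimal subset.

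I do not expect a serious obstacle here; the content is entirely the product-system trick, which interacts well with intersections of the form $\bigcap_{j=0}^d T^{-jn}B$ because the $T$-translates distribute over products coordinatewise. The only point requiring a little care is the bookkeeping in the topological case: in a product of two minimal systems a nonempty open set $U_1 \times U_2$ need not be a set on which a minimal subsystem is supported, so one should fix a minimal subset $M$ of the product and note that minimality of the factors forces the projections of $M$ to be all of $X_1$, $X_2$, hence $M \cap (U_1 \times U_2) \neq \emptyset$; then apply $d$-topological recurrence of $A$ inside $M$. After that, the measures/open sets being zero/empty for $n \in A_1 \cup A_2 = A$ gives the contradiction, and the Ramsey property follows.
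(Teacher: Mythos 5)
Your measurable half is exactly the paper's argument: assume neither $A_1$ nor $A_2$ is a set of $d$-recurrence, take the two witnessing systems, pass to the product $(X_1\times X_2,\mu_1\times\mu_2,T_1\times T_2)$ with the set $B_1\times B_2$, and use that the measure of $\bigcap_{j=0}^{d}(T_1\times T_2)^{-jn}(B_1\times B_2)$ factors as a product, one factor of which vanishes for every $n\in A_1\cup A_2$. That part is correct. (The paper proves only this half in detail and dismisses the topological case with ``similarly''.)

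The gap is in your topological half, precisely at the point you flag as ``the only point requiring a little care''. You claim that for a minimal subset $M$ of $X_1\times X_2$, surjectivity of the coordinate projections $\pi_i(M)=X_i$ forces $M\cap(U_1\times U_2)\neq\emptyset$. That inference is false: take $X_1=X_2=X$ minimal, $M$ the diagonal, and $U_1,U_2$ disjoint nonempty open sets; both projections of the diagonal are onto, yet it misses $U_1\times U_2$ entirely. What you actually need is that \emph{some} minimal subset of the product meets $U_1\times U_2$ (equivalently, that almost periodic points are dense in a product of two minimal systems). This is true, but needs a different argument: take any minimal subset $M_0$ of $X_1\times X_2$ and any $(a_1,a_2)\in M_0$; since $T_1^{n}\times T_2^{m}$ commutes with $T_1\times T_2$, it carries minimal sets to minimal sets, so $(T_1^{n}a_1,T_2^{m}a_2)$ is almost periodic for every $n,m$; now choose $n$ with $T_1^{n}a_1\in U_1$ and $m$ with $T_2^{m}a_2\in U_2$, using minimality of each factor separately. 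Taking $M$ to be the (minimal) orbit closure of that point and $V=M\cap(U_1\times U_2)$, your computation $V\cap\bigcap_{j=1}^{d}(T_1\times T_2)^{-jn}V=\emptyset$ for all $n\in A_1\cup A_2$ goes through and the contradiction closes the proof. Your other suggested route, via Proposition~\ref{birkhoff-equi1}(3) and pigeonholing the recurrence times into $A_1$ or $A_2$, does not work on its own, since which of the two survives depends on the system; you were right to retreat from it to the product construction.
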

\begin{proof} Let $F\in \F_{Poi_d}$ and $F=F_1\cup F_2$. Assume the
contrary that $F_i\not \in \F_{Poi_d}$ for $i=1,2$. Then there are
measure preserving systems $(X_i,\mathcal{B}_i,\mu_i,T_i)$ and
$A_i\in \mathcal{B}_i$ with $\mu_i(A_i)>0$ such that $\mu_i(A_i\cap
T_i^{-n}A_i\cap\ldots\cap T_i^{-dn}A_i)=0$ for $n\in F_i$, where
$i=1,2$. Set $\mu=\mu_1\times \mu_2$, $A=A_1\times A_2$ and
$T=T_1\times T_2$. Then we have
\begin{equation*}
\begin{split}
& \quad \ \mu(A\cap T^{-n}A\cap \ldots\cap T^{-dn}A)
       \\&=\mu_1(A_1\cap T_1^{-n}A_1\cap \ldots\cap T_1^{-dn}A_1)
       \mu_2(A_2\cap T_2^{-n}A_2\cap \ldots\cap T_2^{-dn}A_2)=0
\end{split}
\end{equation*}
for each $n\in F=F_1\cup F_2$, a contradiction. The other case can
be shown similarly.
\end{proof}
\subsection{Nil$_d$ Bohr$_0$-sets and $\RP^{[d]}$}

To show the following result we need several well known facts
(related to distality) from the Ellis enveloping semigroup theory,
see \cite{Au88, G76, V77, Vr}. Also we note that the lifting
property in Theorem \ref{ShaoYe} is valid when $X$ is compact and
Hausdorff (see Appendix \ref{appendix:CT2} for more details).

\begin{thm}\label{huang10}
Let $(X,T)$ be a minimal t.d.s.. Then $(x,y)\in \RP^{[d]}$ if and
only if $N(x,U)\in \F_{d,0}^*$ for each neighborhood $U$ of $y$.
\end{thm}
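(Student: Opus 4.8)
The plan is to prove the two implications separately, using Theorem~\ref{ShaoYe} to translate $\RP^{[d]}$ into a statement about the face group action, and Proposition~\ref{HSY} (the inclusion $\F_{Bir_d}\subset\F_{d,0}^*$, together with $\F_{Poi_d}\subset\F_{Bir_d}$) to handle one direction.

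First I would do the easy direction: assume $(x,y)\in\RP^{[d]}$ and show $N(x,U)\in\F_{d,0}^*$ for every neighborhood $U$ of $y$. The idea is that $\RP^{[d]}$ is exactly the relation induced by the maximal $d$-step nilfactor, so $\F_{d,0}$-sets (which come from $d$-step nilsystems, hence from the nilfactor $\pi\colon X\to X/\RP^{[d]}=:Z$) cannot avoid $N(x,U)$. Concretely, let $A\in\F_{d,0}$; then there is a $d$-step nilsystem $(W,S)$, $w_0\in W$, and an open $O\ni w_0$ with $N(w_0,O)\subseteq A$. Since $(Z,T)$ is an inverse limit of $d$-step nilsystems and $N$-sets behave well under factor maps and inverse limits, one reduces to showing $N(\pi(x),V)\cap N(w_0,O)\neq\emptyset$ for a suitable neighborhood; but since $\pi(x)=\pi(y)$ and $N(\pi(x),\cdot)$ from a minimal nilsystem is syndetic, and $N(w_0,O)$ is syndetic too, two syndetic sets in $\Z$... no — that need not intersect. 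The correct route: a $\D$-set-type argument won't do for $d\ge 2$, so instead I would use that $\F_{d,0}^*\supseteq\F_{Bir_d}$ by Proposition~\ref{HSY}, hence it suffices to show $N(x,U)\in\F_{Bir_d}$ when $(x,y)\in\RP^{[d]}$; and $N(x,U)\in\F_{Bir_d}$ will actually follow from the forthcoming Theorem~\ref{intro-10}. To keep the present theorem self-contained I would instead argue directly: lift using Theorem~\ref{ShaoYe}(1), namely $(x,y^{[d+1]}_*)\in\overline{\F^{[d+1]}}(x^{[d+1]})$, to produce, for any given $d$-step nilsystem and neighborhood data, a face-group element realizing the required return, using minimality of $(\overline{\F^{[d+1]}}(x^{[d+1]}),\F^{[d+1]})$ from Theorem~\ref{ShaoYe}(2).

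For the converse — the substantive direction — assume $N(x,U)\in\F_{d,0}^*$ for every neighborhood $U$ of $y$, and suppose for contradiction that $(x,y)\notin\RP^{[d]}$. Then $\pi(x)\neq\pi(y)$ in $Z=X/\RP^{[d]}$, so there are disjoint open neighborhoods, and pulling back there is a neighborhood $U$ of $y$ and a neighborhood $U'$ of $x$ with, roughly, the orbit of $x$ entering $U$ forcing the image orbit in $Z$ to leave a fixed neighborhood of $\pi(x)$. The point is to manufacture from $Z$ (an inverse limit of $d$-step nilsystems, hence a $d$-step nilsystem in the relevant sense, or a genuine $d$-step nilsystem factor separating $\pi(x)$ from $\pi(y)$) a Nil$_d$ Bohr$_0$-set that misses $N(x,U)$, contradicting $N(x,U)\in\F_{d,0}^*$. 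Here the enveloping semigroup / distality input enters: $(Z,T)$ is distal (being a nilsystem, or inverse limit of nilsystems), so one can find an idempotent $u$ in the Ellis semigroup with $u\pi(x)=\pi(x)$, and then $N(\pi(x),O)$ for a small $O\ni\pi(x)$ is a Nil$_d$ Bohr$_0$-set which, by the separation of $\pi(x)$ and $\pi(y)$ and the fact that $T^nx\in U\Rightarrow T^n\pi(x)\in\pi(U)$ stays away from $O$, is disjoint from $N(x,U)$ — the contradiction. The distality also guarantees $N(\pi(x),O)$ is syndetic and, crucially, by the lifting property of Theorem~\ref{ShaoYe} valid in the compact Hausdorff setting, that working in $Z$ rather than in an explicit finite-dimensional nilsystem still produces a legitimate element of $\F_{d,0}$.

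The main obstacle I anticipate is the converse direction's bookkeeping with inverse limits: $Z=X/\RP^{[d]}$ is only an \emph{inverse limit} of genuine $d$-step minimal nilsystems, not itself finite-dimensional, so to produce an honest Nil$_d$ Bohr$_0$-set I must descend to one of the nilsystems $Z_i$ in the inverse system that already separates $\pi(x)$ from $\pi(y)$ — this requires knowing the separation happens at a finite stage, which follows since the $Z_i$'s separate points of $Z$. Once that reduction is in place, the enveloping-semigroup argument (idempotent fixing the image point, syndeticity of the return-time set, disjointness from $N(x,U)$ by the neighborhood separation) is routine; the delicate part is simply checking that all the neighborhoods can be chosen compatibly and that the compact-Hausdorff version of Theorem~\ref{ShaoYe} (Appendix~\ref{appendix:CT2}) legitimately applies so that no metrizability is secretly used.
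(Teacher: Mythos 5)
Your second paragraph (the direction $N(x,U)\in \F_{d,0}^*$ for all $U$ $\Rightarrow$ $(x,y)\in \RP^{[d]}$) is essentially the paper's argument run in contrapositive form, and it is fine: since the factors in the inverse system defining $X/\RP^{[d]}$ separate points, a separation of $\pi(x)$ from $\pi(y)$ already occurs in some genuine $d$-step nilsystem factor, which yields a Nil$_d$ Bohr$_0$-set disjoint from $N(x,U)$ for small $U$. (The Ellis-semigroup/idempotent material you invoke there is not needed; the definition of $\F_{d,0}$ already does the work.) The problem is the other implication, which you label the ``easy direction'' but which is in fact the substantive one. To show $N(x,U)\in\F_{d,0}^*$ you must prove $N(x,U)\cap N(z_0,V)\neq\emptyset$ for an \emph{arbitrary} $d$-step nilsystem $(Z,R)$, an arbitrary point $z_0\in Z$ and an arbitrary neighborhood $V$ of $z_0$. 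The face group $\F^{[d+1]}$ acts only on powers of $X$; minimality of $(\overline{\F^{[d+1]}}(x^{[d+1]}),\F^{[d+1]})$ tells you about return times of $x$ inside $X$ (it gives, e.g., that $N(x,U)$ contains finite IP-sets of length $d+1$, i.e.\ $N(x,U)\in\F_{fip}$), but it carries no information whatsoever about the external system $(Z,R)$, so ``a face-group element realizing the required return'' cannot by itself intersect $N(x,U)$ with $N(z_0,V)$. Your fallback — deduce $N(x,U)\in\F_{Bir_d}$ from Theorem \ref{intro-10} and then apply $\F_{Bir_d}\subset\F_{d,0}^*$ — is circular: in the paper the implication $(1)\Rightarrow(2)$ of Theorem \ref{intro-10} (Corollary \ref{cor-3.12}) is itself proved via Remark \ref{huangrem}, which is extracted from the proof of the very theorem you are trying to establish.

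The missing idea is a joining argument. The paper passes to a minimal subsystem $A$ of $X\times Z_\infty$, where $Z_\infty$ is the orbit closure of the identity in the (non-metrizable, distal) product system $(Z^Z,R^Z)$, lifts the pair $(x,y)\in\RP^{[d]}(X)$ to a pair $((x,\omega^1),(y,\omega^2))\in\RP^{[d]}(A)$ using the lifting property of Theorem \ref{ShaoYe} in the compact Hausdorff setting, and then uses that the Ellis semigroup of the minimal distal system $(Z,R)$ is a group to see that $\omega^1$ is surjective, hence takes the value $z_0$ at some $z_1\in Z$. Evaluating at $z_1$ produces a minimal joining $B\subset X\times Z$ containing $(x,z_0)$, and projecting to $Z$ and using that $\RP^{[d]}$ of a system of order $d$ is trivial forces the second coordinates to coincide, so that $((x,z_0),(y,z_0))\in\RP^{[d]}(B)$ and $N((x,z_0),U\times V)$ is syndetic by minimality of $B$. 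The $Z^Z$ device is precisely what guarantees that the lift of $x$ can be arranged to sit over the prescribed point $z_0$ — without it, a minimal subsystem of $X\times Z$ need not contain any point of the form $(x,z_0)$. None of this (the joining, the surjectivity of elements of the enveloping group, the reduction via triviality of $\RP^{[d]}(Z)$) is present in your sketch, so this direction has a genuine gap.
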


\begin{proof}
First assume that $N(x,U)\in \F_{d,0}^*$ for each neighborhood $U$
of $y$. Let $(X_d,S)$ be the maximal $d$-step nilfactor of $(X,T)$
(see Theorem \ref{ShaoYe}) and $\pi:X\lra X_d$ be the projection.
Then for any neighborhood $V$ of $\pi(x)$, we have $N(x,U)\cap
N(\pi(x),V)\not=\emptyset$ since $N(x,U)\in \F_{d,0}^*$. This means
that there is a sequence $\{n_i\}$ such that $$(T\times S)^{n_i}(x,
\pi(x))\lra (y,\pi(x)),\ i\to \infty .$$ Thus, we have $$
\pi(y)=\pi(\lim_{i}T^{n_i}x)=\lim_{i}S^{n_i}\pi(x)=\pi(x),$$ i.e.
$(x,y)\in \RP^{[d]}$.

\medskip
Now assume that $(x,y)\in \RP^{[d]}$ and $U$ is a neighborhood of
$y$. We need to show that if $(Z,R)$ is a $d$-step nilsystem,
$z_0\in Z$ and $V$ is a neighborhood of $z_0$ then $N(x,U)\cap
N(z_0,V)\not=\emptyset$.

Let $$W=\prod_{z\in Z}Z \quad \text{ (i.e. $W=Z^Z$) and
}R^Z:W\rightarrow W$$ with $(R^Z\omega)(z)=R(\omega(z))$ for any
$z\in Z$, where $\omega=(\omega(z))_{z\in Z}\in W$. Note that in
general $(W,R^Z)$ is not a metrizable but a compact Hausdorff
system. Since $(Z,R)$ is a $d$-step nilsystem, $(Z,R)$ is distal.
Hence $(W,R^Z)$ is also distal.

Choose $\omega^*\in W$ with $\omega^*(z)=z$ for $z\in Z$, and let
$Z_\infty=\text{cl}(\text{orb}(\omega^*,R^Z))$. Then
$(Z_\infty,R^Z)$ is a minimal subsystem of $(W,R^Z)$ since $(W,R^Z)$
is distal. For $\omega\in Z_\infty$, there exists $p\in E(Z,R)$ such
that $\omega(z)=p(\omega^*(z))=p(z)$ for $z\in Z$. Since $(Z,R)$ is
a minimal distal system, the Ellis semigroup $E(Z,R)$ is a group
(Appendix \ref{appendix:CT2}). Particularly, $p:Z\rightarrow Z$ is a
surjective map. Thus
$$\{\omega(z):z\in Z\}=\{ p(z):z\in Z\}=Z.$$ Hence there there
exists $z_\omega\in Z$ such that $\omega(z_\omega)=z_0$.

\medskip

Take a minimal subsystem $(A,T\times R^Z)$ of the product system
$(X\times Z_\infty,T\times R^Z)$. Let $\pi_X: A\rightarrow X$ be the
natural coordinate projection. Then $\pi_X:(A,T\times
R^Z)\rightarrow (X,T)$ is a factor map between two minimal systems.
Since $(x,y)\in \RP^{[d]}(X,T)$, by Theorem \ref{ShaoYe} there exist
$\omega^1,\omega^2\in W$ such that $((x,\omega^1),(y,\omega^2))\in
\RP^{[d]}(A,T\times R^Z)$.

For $\omega^1$, there exists $z_1\in Z$ such that
$\omega^1(z_1)=z_0$ by the above discussion. Let $\pi: A\rightarrow
X\times Z$ with $\pi(u,\omega)=(u,\omega(z_1))$ for $(u,\omega)\in
A$, $u\in X$, $\omega\in W$. Let $B=\pi(A)$. Then $(B,T\times R)$ is
a minimal subsystem of $(X\times Z,T\times R)$, and $\pi:(A,T\times
R^Z)\rightarrow (B,T\times R)$ is a factor map between two minimal
systems. Clearly $\pi(x,\omega^1)=(x,z_0)$,
$\pi(y,\omega^2)=(y,z_2)$ for some $z_2\in Z$, and
$$((x,z_0),(y,z_2))=\pi\times \pi((x,\omega^1),(y,\omega^2))\in
\RP^{[d]}(B,T\times R).$$ Moreover, we consider the projection
$\pi_Z$ of $B$ onto $Z$. Then $\pi_Z:(B,T\times R)\rightarrow (Z,R)$
is a factor map and so $(z_0,z_2)=\pi_Z\times
\pi_Z((x,z_0),(y,z_2))\in \RP^{[d]}(Z,R)$. Since $(Z,R)$ is a system
of order $d$, $z_0=z_2$. Thus $((x,z_0),(y,z_0))\in
\RP^{[d]}(B,T\times R)$. Particularly, $N(x,U)\cap
N(z_0,V)=N((x,z_0),U\times V)$ is a syndetic set since $(B,T\times
R)$ is minimal. This completes the proof of theorem.
\end{proof}

\begin{rem}\label{huangrem} From the proof of Theorem \ref{huang10},
we have the following result: Let $(X,T)$ be a minimal system and
$(x,y)\in \RP^{[d]}$. Then $N(x,U)\cap F$ is a syndetic set for each
$F\in \F_{d,0}$ and each neighborhood $U$ of $y$.
\end{rem}


\subsection{Sets of $d$-recurrence and nilsequences}

It is known that $d$-recurrence sets are ``almost'' $d$-step
nilsequences \cite{HSY}. This result stated in Theorem
\ref{d-recurrence} follows from Propositions \ref{bhk} and \ref{B-F}
by a discussion in \cite{HSY}.

\begin{prop}\cite[Theorem 1.9]{BHK05} \label{bhk} Let $(X,\X,\mu, T )$ be
an ergodic measure preserving system, let $f \in L^\infty(\mu)$ and
let $d\ge 1$ be an integer. The sequence $\{I_f(d,n)\}$ is the sum
of a sequence tending to zero in uniform density and a $d$-step
nilsequence, where
\begin{equation}\label{}
  I_f(d,n)=\int f(x)f(T^nx)\ldots f(T^{dn}x)\ d\mu(x).
\end{equation}

Especially, for any $A\in \X$
$$\{I_{1_A}(d,n)\}=\{\mu(A\cap T^{-n}A\cap \ldots \cap T^{-dn}A)\}=F_d+N,$$
where $F_d$ is a $d$-step nilsequence and $N$ tending to zero in
uniform density.\end{prop}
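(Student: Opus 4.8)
The plan is to follow the two-step strategy that is standard for such multiple correlation sequences: first project $f$ onto the relevant Host--Kra factor, pushing the error into a sequence tending to zero in uniform density, and then show that on a nilsystem the remaining correlation sequence is genuinely a $d$-step nilsequence. Fix the ergodic system $(X,\X,\mu,T)$ and let $Z_d$ be its $d$-th Host--Kra factor; by the Host--Kra structure theorem \cite{HK05}, $Z_d$ is an inverse limit of $d$-step nilsystems. Write $f=f'+f''$ with $f'=\E(f\,|\,Z_d)$ and $\E(f''\,|\,Z_d)=0$ (so $\|f'\|_\infty\le\|f\|_\infty$). Expanding $I_f(d,n)=\int\prod_{j=0}^d(f'+f'')(T^{jn}x)\,d\mu(x)$ multilinearly, $I_f(d,n)-I_{f'}(d,n)$ is a finite sum of sequences $n\mapsto\int\prod_{j=0}^d h_j(T^{jn}x)\,d\mu(x)$ in which each $h_j\in\{f',f''\}$ and at least one $h_j=f''$.

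The first point is that every such sequence tends to zero in uniform density. This is a uniform form of the Gowers--Host--Kra generalized von Neumann inequality: applying the van der Corput inequality finitely many times, and carrying the supremum over the left endpoint of the averaging interval through each step, bounds the uniform-density average of the squared modulus of such a sequence by a power of an appropriate Host--Kra seminorm of $f''$, which vanishes precisely because $\E(f''\,|\,Z_d)=0$. Hence $\{I_f(d,n)-I_{f'}(d,n)\}$ tends to $0$ in uniform density, and it remains to handle $I_{f'}(d,n)=\int_{Z_d}f'(z)f'(S^nz)\cdots f'(S^{dn}z)\,d\mu_{Z_d}(z)$. Since $Z_d$ is an inverse limit of $d$-step nilsystems, for each $\ep>0$ I would pick a $d$-step nilsystem factor $(Y,S)$ of $Z_d$ and $g_\ep\in C(Y)$, regarded as a function on $Z_d$, with $\|f'-g_\ep\|_{L^{d+1}(\mu_{Z_d})}<\ep$. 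Telescoping and H\"older's inequality (bounding all but one factor in $L^\infty$) give $\sup_n|I_{f'}(d,n)-I_{g_\ep}(d,n)|\le C\,\ep$, so $I_{g_\ep}(d,\cdot)\to I_{f'}(d,\cdot)$ uniformly as $\ep\to0$; as a uniform limit of $d$-step nilsequences is again one, it suffices to prove that for an ergodic $d$-step nilsystem $(Y=G/\Gamma,S,\mu_Y)$ and $g\in C(Y)$ the sequence $n\mapsto\int_Y g(y)g(S^ny)\cdots g(S^{dn}y)\,d\mu_Y(y)$ is a $d$-step nilsequence.

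For this last claim I would pass to the product nilsystem $\sigma=\id\times S\times S^2\times\cdots\times S^d$ on $Y^{d+1}=G^{d+1}/\Gamma^{d+1}$, set $F=g\otimes\cdots\otimes g\in C(Y^{d+1})$, let $\Delta\subset Y^{d+1}$ be the diagonal sub-nilmanifold with its Haar measure $\nu$ (the push-forward of $\mu_Y$), and note that the sequence in question equals $n\mapsto\int_\Delta F(\sigma^nw)\,d\nu(w)$. So the claim reduces to the general principle that, on a nilmanifold, the average of a continuous function over a fixed sub-nilmanifold along a linear (hence polynomial) orbit is a nilsequence. I would prove this using the description of orbit closures of points of nilmanifolds under nilrotations as sub-nilmanifolds, together with the equidistribution theory for polynomial orbits on nilmanifolds (in the spirit of \cite{HKM}): the subgroup generated by the orbit of $\sigma$ and the diagonal group yields, after closure, a sub-nilmanifold $\widetilde Z$ on which the relevant data live, and the average of $F$ over the polynomially varying translates $\sigma^n\Delta$ inside $\widetilde Z$ can be written as the value at time $n$ of a continuous function on a larger nilmanifold evaluated along a polynomial orbit --- which is a nilsequence after the usual reduction from polynomial to linear orbits. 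Combining the three steps, $I_f(d,n)=I_{f'}(d,n)+\big(I_f(d,n)-I_{f'}(d,n)\big)$ with $I_{f'}(d,\cdot)$ a $d$-step nilsequence and the remainder tending to $0$ in uniform density; the case $f=1_A$ is immediate.

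The main obstacle is the last claim. The reduction to $Z_d$ is routine once the uniform van der Corput bookkeeping is set up, the only subtlety there being to keep every estimate uniform over the position of the averaging interval. The delicate point is that for $d\ge2$ the family $\{S^n\}$ is not equicontinuous, so one cannot approximate $\int_\Delta F(\sigma^nw)\,d\nu(w)$ by finite orbit averages uniformly in $n$; one is forced to exploit the algebraic structure --- that the diagonal together with the orbit generates a sub-nilmanifold --- and the finer equidistribution theory of polynomial sequences on nilmanifolds to produce the nilsequence honestly.
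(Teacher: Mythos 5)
The paper offers no proof of this proposition --- it is quoted verbatim as Theorem 1.9 of Bergelson--Host--Kra \cite{BHK05} --- and your three-step plan (project onto $Z_d$ and absorb the error into a sequence tending to zero in uniform density via a van der Corput argument kept uniform over the averaging window, approximate on the inverse limit of $d$-step nilsystems, then show that the diagonal correlation sequence on a $d$-step nilsystem is a $d$-step nilsequence by realizing it as a continuous function evaluated along the orbit of $(\id,\tau,\tau^2,\dots,\tau^d)$ in a quotient of $G^{d+1}$) is essentially the argument given in that source. The only point worth making explicit is the step count in the last stage: since the whole construction lives inside $G^{d+1}$, which is again $d$-step nilpotent, the resulting nilsequence is genuinely of step $d$ and not higher, so your reduction is sound.
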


\begin{prop} \cite{FK} or \cite[Theorem 6.15]{BM20}\label{B-F} Let $(X,\X,\mu, T )$ be an ergodic
measure preserving system and $d\in \N$. Then for $A\in \X$ with
$\mu(A)>0$ there is $c>0$ such that
$$\{n\in \Z: \mu(A\cap T^{-n}A\cap \ldots \cap T^{-dn}A)>c\}$$ is an
$IP^*$-set.
\end{prop}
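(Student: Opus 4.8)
The plan is to work with the dual description of $IP^*$ sets: $R_c:=\{n:\mu(A\cap T^{-n}A\cap\ldots\cap T^{-dn}A)>c\}$ is $IP^*$ if and only if it meets every IP set $FS(\{p_i\}_{i=1}^\infty)$. So I would fix a sequence $\{p_i\}_{i=1}^\infty$ in $\Z$, write $n_\alpha=\sum_{i\in\alpha}p_i$ for a finite nonempty $\alpha\subset\N$ and $a_n=\mu(A\cap T^{-n}A\cap\ldots\cap T^{-dn}A)\in[0,1]$, and try to produce some $\alpha$ with $a_{n_\alpha}>c$, the point being that $c>0$ must be chosen \emph{independently of $\{p_i\}$}.

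The main input is the Furstenberg--Katznelson IP Szemer\'edi theorem, applied to the commuting IP-systems of measure preserving transformations $\alpha\mapsto T^{n_\alpha},T^{2n_\alpha},\ldots,T^{dn_\alpha}$ (note $T^{jn_{\alpha\cup\beta}}=T^{jn_\alpha}T^{jn_\beta}$ for disjoint $\alpha,\beta$). It provides a sub-IP-ring, i.e.\ an IP set of the form $FS(\{q_j\})\subset FS(\{p_i\})$ in which each $q_j$ is a sum of $p_i$'s over a finite block, such that $L:=\operatorname{IP-lim}_{\alpha\in FS(\{q_j\})}a_{n_\alpha}$ exists and $L>0$. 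Because $(a_n)$ is bounded an IP-limit is a genuine limit, so for any $c<L$ all but finitely many of these $\alpha$ satisfy $a_{n_\alpha}>c$, and the associated $n_\alpha$ lie in $FS(\{p_i\})$; hence $R_c\cap FS(\{p_i\})\neq\emptyset$.

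The hard part, and the only genuinely delicate point, is that $c$ must not depend on the chosen IP set: a priori $L$ does, and knowing only that for \emph{each} IP set $R_{c(F)}$ meets it for \emph{some} $c(F)>0$ is not enough. To get uniformity I would inspect the proof of the IP Szemer\'edi theorem: the IP--van der Corput lemma together with the IP--PET induction reduces matters to compact (isometric, ultimately pro-nil) extensions and to weakly mixing extensions, and each reduction is a universal inequality whose constant depends only on $(X,\X,\mu,T)$ and on $A$, not on the IP-system; the base (Kronecker) case gives a bound of the shape $\mu(A)^2$ via the spectral theorem. The $d=1$ case already illustrates this and is entirely soft: passing to a sub-IP-ring so that the weak-operator IP-limit $P=\operatorname{IP-lim}_\alpha U^{n_\alpha}$ of the Koopman powers exists, one checks, using the collapse of nested double IP-limits, that $P^2=P$, while $\|P\|\le1$ since $P$ is a weak limit of unitaries; an idempotent contraction on a Hilbert space is an orthogonal projection, and $P\mathbf 1=\mathbf 1$, so
\[
\operatorname{IP-lim}_\alpha\mu(A\cap T^{-n_\alpha}A)=\langle P1_A,1_A\rangle=\|P1_A\|^2\ge|\langle P1_A,\mathbf 1\rangle|^2=\mu(A)^2,
\]
with no dependence on $\{p_i\}$, so $c=\mu(A)^2/2$ works when $d=1$. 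Carrying this bookkeeping through the $d\ge2$ induction should yield a single constant $c_0=c_0(X,\X,\mu,T,A)>0$ good for all IP sets, and one takes $c=c_0/2$.

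I would also want to record why the softer route via nilsequences does not suffice, to justify invoking the full Furstenberg--Katznelson machinery. By the Bergelson--Host--Kra decomposition (Proposition~\ref{bhk}), $a_n=F(n)+e(n)$ with $F$ a nonnegative $d$-step nilsequence and $e(n)\to0$ in uniform density; since $F(0)=\mu(A)-e(0)$ is close to $\mu(A)$ and the return times of a point of a minimal nilsystem to a neighbourhood of itself form an $IP^*$ set, one obtains an $IP^*$ set of $n$ with $F(n)$ large. But $e(n)$ is only small in density, not along an $IP^*$ set, so this controls $a_n$ only on a set of positive density --- exactly the non-IP statement, not the one we want. Hence the error-free IP-ergodic argument is really needed, and the one thing to be added to it is the uniformity of the constant.
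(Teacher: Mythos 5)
The paper offers no proof of this proposition: it is imported verbatim from \cite{FK} and \cite[Theorem 6.15]{BM20}, so there is no internal argument to measure yours against. Your reconstruction is faithful to how the cited result is actually established, and in one respect more informative than the citation. The reduction to meeting every $FS(\{p_i\}_{i=1}^\infty)$ with a threshold $c$ chosen independently of the sequence is the correct formulation of what must be proved; your $d=1$ computation is complete and correct (the weak IP-limit $P$ of the Koopman powers along a sub-IP-ring is an idempotent contraction fixing the constants, hence an orthogonal projection, and $\langle P1_A,1_A\rangle=\|P1_A\|^2\ge\langle 1_A,P\mathbf{1}\rangle^2=\mu(A)^2$, a bound visibly independent of $\{p_i\}$); and your observation that Proposition \ref{bhk} cannot substitute here --- the error term is controlled only in uniform density, which says nothing along a prescribed IP set --- is a genuinely useful point that explains why the heavier Furstenberg--Katznelson machinery is unavoidable. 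The only soft spot is the one you yourself flag: for $d\ge 2$, ``carrying this bookkeeping through the induction should yield a single constant'' is a promissory note rather than an argument, and the uniformity of the constant over all IP-rings is exactly the content that \cite[Theorem 6.15]{BM20} packages (their statement already carries the uniform $c=c(X,\X,\mu,T,A)$). So either cite that theorem as the paper does --- in which case your first two paragraphs are already a complete derivation --- or the structure-theoretic induction must actually be reproduced; as written your proof is self-contained only for $d=1$. One cosmetic imprecision: an IP-limit $L$ does not give ``all but finitely many'' $\alpha$ with $a_{n_\alpha}>c$, only all $\alpha$ beyond some $\alpha_0$ in the IP order, but that still produces the required element of $R_c\cap FS(\{p_i\})$.
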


\begin{thm}\label{d-recurrence}
Let $(X,\X,\mu, T )$ be an ergodic  measure preserving system and
$d\in \N$. Then for all $A\in \X$ with $\mu(A)>0$ the set $$I=\{n\in
\Z: \mu(A\cap T^{-n}A\cap \ldots \cap T^{-dn}A)>0\}$$ is an
``almost'' Nil$_d$ Bohr$_0$-set, i.e. there is some subset $M$ with
$BD^*(M)=0$ such that $I \D M$ is a Nil$_d$ Bohr$_0$-set.
\end{thm}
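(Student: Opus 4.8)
The plan is to combine the two preceding propositions (Proposition \ref{bhk} and Proposition \ref{B-F}) in the manner indicated in \cite{HSY}. Fix an ergodic system $(X,\X,\mu,T)$, a set $A\in\X$ with $\mu(A)>0$, and write $a_n=\mu(A\cap T^{-n}A\cap\ldots\cap T^{-dn}A)$, so that $I=\{n\in\Z:a_n>0\}$. By Proposition \ref{bhk} we may write $a_n=b_n+c_n$ where $\{b_n\}$ is a $d$-step nilsequence and $\{c_n\}$ tends to zero in uniform density, i.e. $BD^*(\{n:|c_n|\ge\eta\})=0$ for every $\eta>0$. By Proposition \ref{B-F} there is a constant $c>0$ such that $P:=\{n:a_n>c\}$ is an $IP^*$-set; in particular $P$ is syndetic, and certainly $P\subseteq I$. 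The idea is that $I$ should, up to a density-zero error, coincide with the set where the nilsequence $b_n$ is bounded below by a positive constant, and such a set is a Nil$_d$ Bohr$_0$-set.

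The key steps, in order: (i) Realize the nilsequence $\{b_n\}$ concretely as $b_n=f(R^nz_0)$ for a $d$-step nilsystem $(Z,R)$ (or an inverse limit of such, which is still a system of order $d$), a continuous $f$ and a point $z_0$; strictly speaking $b_n$ is only a uniform limit of basic nilsequences, but a standard approximation argument lets us replace $b_n$ by a basic $d$-step nilsequence at the cost of another uniformly small (hence uniform-density-zero) error, so without loss of generality $b_n=f(R^nz_0)$ with $f\in C(Z)$. (ii) Choose $0<\eta<c/3$ and put $M_1=\{n:|c_n|\ge\eta\}$, which has $BD^*(M_1)=0$. Off $M_1$ we have $|a_n-b_n|<\eta$. (iii) For $n\in P\setminus M_1$ we get $b_n>c-\eta>2c/3$, so if we set $U=\{w\in Z: \mathrm{Re}\,f(w)>2c/3\}$ (an open subset of $Z$, after replacing $f$ by $\mathrm{Re}\,f$ which we may since $a_n$ is real) then $R^nz_0\in U$; moreover $f(z_0)=b_0\ge a_0-\eta=\mu(A)-\eta$, and by choosing things so that $2c/3<\mu(A)$ we ensure $z_0\in U$, so $N(z_0,U)$ is (contained in) a Nil$_d$ Bohr$_0$-set. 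Thus $P\setminus M_1\subseteq N(z_0,U)\subseteq$ a Nil$_d$ Bohr$_0$-set $B$. (iv) Conversely, on $B\setminus M_1$ we have $b_n>2c/3$ hence $a_n>2c/3-\eta>0$, so $B\setminus M_1\subseteq I$. (v) Now estimate the symmetric difference: $I\setminus B\subseteq (I\setminus(B\setminus M_1))\subseteq M_1\cup(I\setminus B)$ — more carefully, $B\setminus M_1\subseteq I$ gives $B\setminus I\subseteq M_1$, and $P\setminus M_1\subseteq B$ gives $I\setminus B\subseteq (I\setminus P)\cup M_1$. The set $I\setminus P=\{n:0<a_n\le c\}$ need not be density zero, so one cannot simply take $M=M_1$. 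Instead, enlarge $B$: replace $U$ by the larger open set $U'=\{w:\mathrm{Re}\,f(w)>\eta\}$ (still containing $z_0$ as long as $\mu(A)>\eta$), giving a Nil$_d$ Bohr$_0$-set $B'\supseteq N(z_0,U')$. Then for $n\in I\setminus M_1$ we have $b_n>a_n-\eta>-\eta$... this is not quite enough; the correct move is: on the complement of $M_1$, $b_n$ and $a_n$ differ by less than $\eta$, so $\{n\notin M_1: a_n>0\}$ and $\{n\notin M_1: b_n>\eta\}$ differ by a subset of $\{n\notin M_1: 0<|b_n|\le\eta\}$, and this last set together with $M_1$ must be shown to have upper Banach density zero — but that is exactly what fails in general. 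So the honest route is the one in \cite{HSY}: use that $\{n:b_n>\eta\}$ differs from $I$ by a set contained in $M_1\cup\{n: |a_n|\le\eta \text{ but } a_n>0\text{-region discrepancies}\}$, and absorb the genuinely problematic part using that $P$ is syndetic to control the structure.

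Let me restate the resolution more cleanly, since the bookkeeping above is the crux. Set $B'=N(z_0,U')$ with $U'=\{\mathrm{Re}\,f>\eta\}\ni z_0$, a Nil$_d$ Bohr$_0$-set. For $n\notin M_1$: if $n\in I$ then... we still only get $b_n>-\eta$, not $b_n>\eta$. The point that makes it work is to run the argument with the family $\F_{d,0}$ and Remark \ref{huangrem}/Theorem \ref{huang10} in reserve, but more directly: observe $I\triangle B'\subseteq M_1\cup\{n\notin M_1: 0\le a_n\le 2\eta\}$, and then argue that $\{n: 0<a_n\le 2\eta\}$ has upper Banach density zero — which follows because on any sequence of intervals witnessing positive Banach density of this set, the averages $\frac1{|I_k|}\sum_{n\in I_k}a_n\to 0$, contradicting that the Cesàro limit of $a_n$ (which exists and is positive by the ergodic-theoretic lower bound, cf. the fact that $\liminf$ of the averages of $a_n$ is bounded below by a positive constant, again from multiple recurrence) is positive — actually one needs the averages over arbitrary long intervals, which is where $IP^*$ (hence syndeticity with the stronger $IP^*$ feature) of $P$ is used to guarantee a uniform positive density of $P$ inside every sufficiently long interval, forcing $\frac1{|I_k|}\sum_{n\in I_k}a_n\ge c\cdot\underline{d}(P)>0$. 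Granting this, $M:=M_1\cup\{n:0<a_n\le 2\eta\}$ has $BD^*(M)=0$ and $I\triangle B'\subseteq M$, which is the desired conclusion with the Nil$_d$ Bohr$_0$-set being $B'$.

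The main obstacle, and the step that requires the most care, is precisely (v): showing that the ``small but positive'' part $\{n:0<a_n\le 2\eta\}$ of $I$ has zero upper Banach density. This does not follow from Proposition \ref{bhk} alone (uniform density, not Banach density) nor from Proposition \ref{B-F} directly; it needs the additional input that the long-interval averages of $a_n$ are uniformly bounded below by a positive constant, which in turn rests on the $IP^*$-ness (and hence the uniform syndeticity) of the set $P=\{n:a_n>c\}$ from Proposition \ref{B-F}. Everything else — realizing the nilsequence as an orbit in a system of order $d$, checking $z_0$ lies in the relevant open set because $b_0$ is close to $\mu(A)$, and passing from $\mathrm{Re}\,f$ to an honest open set — is routine. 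This is the ``discussion in \cite{HSY}'' alluded to before the statement, and I would simply carry it out with $\eta$ chosen small relative to $c$ and $\mu(A)$.
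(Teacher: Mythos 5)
Your two ingredients (Proposition \ref{bhk} and Proposition \ref{B-F}) and the general shape of the argument are indeed what the paper has in mind --- the paper itself only sketches this, deferring the details to \cite{HSY} --- but your write-up has one repairable gap and one fatal one. The repairable gap is the claim $f(z_0)=b_0\ge a_0-\eta$: convergence of $\{c_n\}$ to zero in uniform density says nothing about the single value $c_0$, so you have not shown $z_0\in U$, which is exactly what makes $N(z_0,U)$ a Nil$_d$ Bohr$_0$-set rather than merely a Nil$_d$ Bohr set. This can be fixed: nilsystems are distal, so $N(z_0,W)$ is an $IP^*$-set for every neighborhood $W$ of $z_0$; if one had $f(z_0)<c-2\eta$, one could intersect the $IP^*$-set $N(z_0,\{f<c-2\eta\})$ with the $IP^*$-set $P$ (the $IP^*$-sets form a filter), remove the Banach-null set $M_1$, and contradict $b_n>c-\eta$ on $P\setminus M_1$; hence $f(z_0)\ge c-2\eta$ and one may take $U=\{f>c-3\eta\}$.

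The fatal problem is step (v). You are trying to prove the two-sided statement $BD^*(I\D B')=0$, and for that you assert that $\{n:0<a_n\le 2\eta\}$ has upper Banach density zero. That assertion is false in general --- a multiple correlation sequence can take small positive values on a set of positive density (already for periodic systems one can arrange $a_n$ to equal a positive value below $2c/3$ on a full residue class) --- and the averaging argument you offer for it is invalid, since small values of $a_n$ on a positive-density set do not depress the interval averages when the remaining $n$ carry large values. But the entire step is unnecessary: Nil$_d$ Bohr$_0$-sets form an upward-hereditary family (the definition only requires that some $N(x_0,U)$ with $x_0\in U$ be \emph{contained in} the set), so the conclusion ``$I\D M$ is a Nil$_d$ Bohr$_0$-set'' only asks that $I\D M$ contain such an $N(z_0,U)$. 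Once you have $BD^*\bigl(N(z_0,U)\setminus I\bigr)=0$ --- which already follows from your steps (i)--(iv), since $n\in N(z_0,U)\setminus M_1$ forces $a_n>0$ --- simply take $M:=N(z_0,U)\setminus I$; then $BD^*(M)=0$ and $I\D M=I\cup N(z_0,U)\supseteq N(z_0,U)$ is a Nil$_d$ Bohr$_0$-set. No control of $I\setminus N(z_0,U)$ is needed, so the ``main obstacle'' you identify is a phantom created by reading the conclusion as an equality up to null sets rather than as a containment.
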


As an immediate consequence, one has

\begin{cor}\label{cor-3.12}
Let $(X,T)$ be a minimal t.d.s. and $d\in \N$. If $(x,y)\in
\RP^{[d]}$, then $N(x,U)\in \F_{Poi_d}$ and $N(x,U)\in \F_{Bir_d}$
for each neighborhood $U$ of $y$.
\end{cor}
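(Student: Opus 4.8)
This is intended as a quick consequence of Theorem \ref{d-recurrence} and Remark \ref{huangrem}, and the plan is to prove the formally stronger statement that, for each neighborhood $U$ of $y$, the set $N(x,U)$ is \emph{itself} a set of $d$-recurrence. This gives $N(x,U)\in\F_{Poi_d}$ directly, and $N(x,U)\in\F_{Bir_d}$ then follows from the inclusion $\F_{Poi_d}\subset\F_{Bir_d}$, so only the Poincar\'e statement needs work. Fixing $U$, the goal is: for every measure preserving system $(Z,\mathcal{B},\nu,R)$ and every $B\in\mathcal{B}$ with $\nu(B)>0$, find some $n\in N(x,U)$ with $\nu(B\cap R^{-n}B\cap\ldots\cap R^{-dn}B)>0$.

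First I would reduce to ergodic $(Z,\mathcal{B},\nu,R)$. If the conclusion failed for some system and some $B$, then $\nu(B\cap R^{-n}B\cap\ldots\cap R^{-dn}B)=0$ for every $n\in N(x,U)$; since $N(x,U)$ is countable, the ergodic decomposition $\nu=\int\nu_\omega\,dP(\omega)$ then yields a single $P$-full-measure set of indices $\omega$ on which this vanishing holds simultaneously for all $n\in N(x,U)$, and because $\nu(B)>0$ one such $\omega$ has $\nu_\omega(B)>0$ — an ergodic counterexample. Granting ergodicity, put $I=\{n\in\Z:\nu(B\cap R^{-n}B\cap\ldots\cap R^{-dn}B)>0\}$. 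By Theorem \ref{d-recurrence} there is $M\subset\Z$ with $BD^*(M)=0$ such that $F:=I\D M$ (the symmetric difference) is a Nil$_d$ Bohr$_0$-set, hence $F\in\F_{d,0}$.

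Now I would invoke Remark \ref{huangrem}: since $(X,T)$ is minimal and $(x,y)\in\RP^{[d]}$, the set $N(x,U)\cap F$ is syndetic, and in particular $BD^*(N(x,U)\cap F)>0$. Since $F\subset I\cup M$ we get $N(x,U)\cap F\subset (N(x,U)\cap I)\cup M$, so subadditivity of upper Banach density together with $BD^*(M)=0$ forces $BD^*(N(x,U)\cap I)>0$; in particular $N(x,U)\cap I\ne\emptyset$. Any $n$ in this intersection lies in $N(x,U)$ and satisfies $\nu(B\cap R^{-n}B\cap\ldots\cap R^{-dn}B)>0$, which is exactly what was required.

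I expect the only delicate point to be this last density bookkeeping, which is also the reason Theorem \ref{d-recurrence} does not suffice on its own: it tells us only that $I$ differs from a genuine Nil$_d$ Bohr$_0$-set by a density-zero set, while $N(x,U)$ is only known to meet genuine Nil$_d$ Bohr$_0$-sets, so to pin a point of $N(x,U)$ inside $I$ one needs the intersection $N(x,U)\cap F$ to be large enough (positive upper Banach density) to survive the deletion of $M$ — and it is precisely this extra information, syndeticity of $N(x,U)\cap F$, that Remark \ref{huangrem} provides. The ergodic reduction is routine, and no input beyond Theorem \ref{d-recurrence}, Remark \ref{huangrem}, and the inclusion $\F_{Poi_d}\subset\F_{Bir_d}$ is needed.
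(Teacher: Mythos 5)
Your proof is correct and follows essentially the same route as the paper's: Theorem \ref{d-recurrence} produces the almost Nil$_d$ Bohr$_0$-set $F=I\,\D\,M$, Remark \ref{huangrem} gives syndeticity of $N(x,U)\cap F$, and the density-zero set $M$ is removed to land a point of $N(x,U)$ in $I$. The only difference is that you spell out two steps the paper leaves implicit — the reduction to ergodic systems via ergodic decomposition and the $BD^*$-subadditivity bookkeeping — both of which are handled correctly.
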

\begin{proof} Let $U$ be a neighborhood of $y$. We have shown in Theorem \ref{huang10}
that $(x,y)\in \RP^{[d]}$ if and only if $N(x,U)\in \F_{d,0}^*.$
This means that $N(x,U)\cap B\not=\emptyset$ for each
$B\in\F_{d,0}.$

Now let $(X,\X,\mu, T )$ be an ergodic measure preserving system and
$A\in \X$ with $\mu(A)>0$. Set
$$F=\{n\in\Z:\mu(A\cap T^{-n}A\cap \ldots\cap T^{-dn}A)>0\}.$$
By Theorem \ref{d-recurrence} there is some subset $M$ with
$BD^*(M)=0$ such that $B=F \D M$ is a Nil$_d$ Bohr$_0$-set. Hence we
have $N(x,U)\cap (F\Delta M)$ is syndetic by Remark \ref{huangrem}.
Thus we conclude that there is $n\not=0$ with $n\in N(x,U)\cap F$
since $BD^*(M)=0$. By the definition, $N(x,U)\in \F_{Poi_d}\subset
\F_{Bir_d}$. The proof is completed.
\end{proof}

\subsection{A result concerning Nil$_d$ Bohr$_0$-sets}

To show the converse of Corollary \ref{cor-3.12}, we need the
following result.

\begin{thm}\cite{HSY}\label{HSY2011}
Let $d\in\N$. Then
$$\F_{Poi_d}\subset\F_{Bir_d}\subset \F^*_{d,0} . $$
\end{thm}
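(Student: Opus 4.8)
The plan is to prove the chain $\F_{Poi_d}\subset\F_{Bir_d}\subset \F^*_{d,0}$ in two stages. The first inclusion, $\F_{Poi_d}\subset\F_{Bir_d}$, is essentially formal: any set of $d$-recurrence is a set of $d$-topological recurrence, because given a minimal t.d.s. $(X,T)$ and a nonempty open set $U$, one may choose an invariant Borel probability measure $\mu$ on $X$ (which exists by Krylov--Bogolyubov), and by minimality $\mu(U)>0$; then a set $S$ of $d$-recurrence produces $n\in S$ with $\mu(U\cap T^{-n}U\cap\cdots\cap T^{-dn}U)>0$, hence in particular $U\cap T^{-n}U\cap\cdots\cap T^{-dn}U\neq\emptyset$. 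Since $\F_{Poi_d}$ and $\F_{Bir_d}$ are the families \emph{generated} by these collections (i.e.\ closed upward), the inclusion of the generating collections gives $\F_{Poi_d}\subset\F_{Bir_d}$.

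For the second and substantial inclusion $\F_{Bir_d}\subset\F^*_{d,0}$, it suffices to show that every set $S$ of $d$-topological recurrence meets every Nil$_d$ Bohr$_0$-set, since $\F^*_{d,0}$ is upward closed and a set in $\F_{Bir_d}$ contains such an $S$. So fix a $d$-step nilsystem $(Z,R)$, a point $z_0\in Z$, and an open neighborhood $V$ of $z_0$; we must find $n\in S$ with $n\in N(z_0,V)=\{n: R^nz_0\in V\}$. The key point is that a $d$-step nilsystem is distal, so the orbit closure $Y=\overline{\O(z_0,R)}$ is a minimal subsystem; restricting to $Y$, the set $V\cap Y$ is a nonempty open subset of the minimal system $(Y,R)$. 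Applying the equivalence in Proposition \ref{birkhoff-equi1} (or the definition of $d$-topological recurrence directly), there is $n\in S$ with $(V\cap Y)\cap R^{-n}(V\cap Y)\cap\cdots\cap R^{-dn}(V\cap Y)\neq\emptyset$. In particular $V\cap R^{-n}V\cap\cdots\cap R^{-dn}V\neq\emptyset$, so some point $w\in V$ has $R^{jn}w\in V$ for $1\le j\le d$. This by itself does not immediately give $R^nz_0\in V$; to conclude I would instead invoke the characterization in Proposition \ref{birkhoff-equi1}(3): for the t.d.s.\ $(Y,R)$ there are a point $z\in Y$ and $\{n_i\}\subset S$ with $R^{jn_i}z\to z$ for each $1\le j\le d$. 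Since $(Y,R)$ is minimal there is $\ell\in\Z$ with $R^\ell z\in V$, and then for large $i$ we get $R^{n_i}(R^\ell z)\in R^\ell(\text{nbhd of }z)$; transporting back to $z_0$ using minimality of $Y$ (pick $m$ with $R^m z_0$ close to $R^\ell z$), one obtains $n=n_i\in S$ with $R^n z_0\in V$ for $i$ large enough. This shows $S\cap N(z_0,V)\neq\emptyset$, hence $S\in\F^*_{d,0}$.

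Alternatively — and this is probably the cleanest route — one appeals to the already-established Theorem \ref{huang10}: for a minimal $d$-step nilsystem $(Z,R)$, the diagonal pair $(z_0,z_0)$ trivially lies in $\RP^{[d]}$, but that is too weak; what one really wants is that for \emph{any} minimal t.d.s.\ realizing the recurrence, $N(z_0,V)$ is nonempty along $S$. So I would present the direct argument of the previous paragraph rather than route through Theorem \ref{huang10}. The one technical subtlety worth isolating as a lemma is the passage from "$R^{jn}w\in V$ for some $w\in V$" to "$R^n z_0\in V$": this is handled by working inside the minimal subsystem $Y=\overline{\O(z_0,R)}$ and using Proposition \ref{birkhoff-equi1}, which is exactly why the three conditions there were established.

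\textbf{The main obstacle} I anticipate is precisely that last step — a set of $d$-topological recurrence is defined via the \emph{open-set} formulation $U\cap T^{-n}U\cap\cdots\cap T^{-dn}U\neq\emptyset$, which produces a witness point somewhere in $U$, not the specified point $z_0$. Overcoming it requires the pointwise recurrence reformulation of Proposition \ref{birkhoff-equi1}(3) together with minimality of the orbit closure of $z_0$ in the distal (hence pointwise-almost-periodic) system $(Z,R)$; without distality this reduction would fail, so the use of the nilsystem structure is essential here. Everything else is bookkeeping with upward-closed families. Since this theorem is quoted from \cite{HSY}, I would keep the write-up brief, emphasizing the two reductions and citing \cite{HSY} for the full details of the nilsystem recurrence argument.
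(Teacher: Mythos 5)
Your first inclusion is fine and matches what the paper treats as immediate: a set of $d$-recurrence is a set of $d$-topological recurrence because a minimal system carries an invariant Borel probability measure of full support, so $\mu(U)>0$ and positivity of $\mu(U\cap T^{-n}U\cap\cdots\cap T^{-dn}U)$ gives nonemptiness; upward closure of the generated families then yields $\F_{Poi_d}\subset\F_{Bir_d}$. Note, however, that the paper does not prove this theorem at all --- it is imported from \cite{HSY} --- so the weight of your write-up falls entirely on the second inclusion, and that is where there is a genuine gap.

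The problem is exactly the obstacle you flag, and your proposed fix does not overcome it. From $d$-topological recurrence (in either the open-set form or the pointwise form of Proposition \ref{birkhoff-equi1}(3)) you obtain \emph{some} point $z$ in the orbit closure $Y$ of $z_0$ and times $n_i\in S$ with $R^{jn_i}z\to z$ for $1\le j\le d$; nothing forces $z_0$ itself to be such a point (the set of such $z$ is only residual). Your transfer step --- choose $m$ with $R^m z_0$ close to $R^\ell z$ and deduce that $R^{n_i}(R^m z_0)$ stays close to $R^{n_i}(R^\ell z)$ uniformly in $i$ --- implicitly uses equicontinuity of the family $\{R^{n_i}\}_i$, which is precisely what a $d$-step nilsystem with $d\ge 2$ lacks (it is distal but not equicontinuous). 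Even granting that, you would conclude that an iterate of $R^m z_0$, not of $z_0$, returns near $R^\ell z$, not near $z_0$; neither the time shift by $m$ nor the spatial offset between $z_0$ and $R^\ell z$ is removable by minimality alone. For $d=1$ the argument does close up, because equicontinuity gives $N(U',U')\subseteq N(z_0,V)$ for a suitable small $U'\ni z_0$; for $d\ge 2$ the passage from ``some point of a small ball returns cubically'' to ``$z_0$ itself returns'' is the actual content of $\F_{Bir_d}\subset\F^*_{d,0}$, and in \cite{HSY} it is obtained from the algebraic description of $N(z_0,V)$ in Mal'cev coordinates via generalized polynomials, not from soft recurrence and minimality arguments. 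As written, your proposal establishes only the first, easy inclusion.
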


\subsection{Recurrence sets and $\RP^{[d]}$}

Now we can sum up the main results of this section as follows:

\begin{thm}\label{several}
Let $(X,T)$ be a minimal t.d.s.. Then the following statements are
equivalent:

\begin{enumerate}

\item $(x,y)\in \RP^{[d]}$.

\item
$N(x,U)\in \F_{Poi_d}$ for each neighborhood $U$ of $y$.

\item $N(x,U)\in \F_{Bir_d}$ for each
neighborhood $U$ of $y$.

\item $N(x,U)\in \F_{d,0}^*$ for each
neighborhood $U$ of $y$.
\end{enumerate}
\end{thm}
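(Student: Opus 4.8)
The plan is to assemble Theorem \ref{several} from the results already collected in this section, closing the cycle of implications among the four statements. The equivalence $(1)\Leftrightarrow(4)$ is exactly Theorem \ref{huang10}, so nothing new is needed there. For the remaining equivalences I would exploit the inclusions $\F_{Poi_d}\subset\F_{Bir_d}\subset\F^*_{d,0}$ from Theorem \ref{HSY2011}: these give immediately that $(2)\Rightarrow(3)\Rightarrow(4)$, since if $N(x,U)\in\F_{Poi_d}$ then $N(x,U)\in\F_{Bir_d}$, and if $N(x,U)\in\F_{Bir_d}$ then $N(x,U)\in\F^*_{d,0}$, for every neighborhood $U$ of $y$. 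Thus it only remains to prove $(1)\Rightarrow(2)$, i.e.\ that $(x,y)\in\RP^{[d]}$ forces $N(x,U)\in\F_{Poi_d}$ for each neighborhood $U$ of $y$.

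That last implication is precisely the content of Corollary \ref{cor-3.12}, whose proof is already in hand: one combines Theorem \ref{huang10} (so that $N(x,U)\in\F^*_{d,0}$, hence $N(x,U)$ meets every Nil$_d$ Bohr$_0$-set), the strengthening in Remark \ref{huangrem} (so that $N(x,U)\cap F$ is in fact syndetic for every $F\in\F_{d,0}$), and Theorem \ref{d-recurrence} (which writes the multiple-recurrence set $F=\{n:\mu(A\cap T^{-n}A\cap\cdots\cap T^{-dn}A)>0\}$ of an ergodic system as $B\,\Delta\,M$ with $B$ a Nil$_d$ Bohr$_0$-set and $BD^*(M)=0$). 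Syndeticity of $N(x,U)\cap B$ together with $BD^*(M)=0$ forces $N(x,U)\cap F\neq\emptyset$ with a nonzero element, which is exactly the defining property of a set of $d$-recurrence; passing from ergodic to general measure preserving systems via the ergodic decomposition then yields $N(x,U)\in\F_{Poi_d}$. So the proof of Theorem \ref{several} is simply: $(1)\Leftrightarrow(4)$ by Theorem \ref{huang10}; $(1)\Rightarrow(2)$ by Corollary \ref{cor-3.12}; $(2)\Rightarrow(3)$ and $(3)\Rightarrow(4)$ by the chain of inclusions in Theorem \ref{HSY2011}. This closes the loop $(1)\Rightarrow(2)\Rightarrow(3)\Rightarrow(4)\Rightarrow(1)$.

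The genuinely substantive work has all been done in the preceding subsections, so at the level of Theorem \ref{several} there is no serious obstacle — the theorem is a bookkeeping assembly. If I were to point to where the weight lies, it is in the two imported facts: Theorem \ref{HSY2011} (the inclusion $\F_{Bir_d}\subset\F^*_{d,0}$, from \cite{HSY}), which is what makes the ``easy'' direction $(2)\Rightarrow(3)\Rightarrow(4)$ possible at all, and Theorem \ref{d-recurrence} together with the syndeticity in Remark \ref{huangrem}, which drive Corollary \ref{cor-3.12}. One small point I would be careful about in writing $(1)\Rightarrow(2)$ cleanly is the reduction to ergodic systems: a set of $d$-recurrence must work for \emph{every} measure preserving system, and Theorem \ref{d-recurrence} is stated for ergodic ones, so I would invoke the ergodic decomposition (and the fact that $\mu(A)>0$ forces positive measure of $A$ on a positive-measure set of ergodic components) to conclude. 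Beyond that, the statement follows formally from what is already available.
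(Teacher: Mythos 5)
Your proposal is correct and follows exactly the same route as the paper: $(1)\Rightarrow(2)$ via Corollary \ref{cor-3.12}, $(2)\Rightarrow(3)\Rightarrow(4)$ via the inclusions of Theorem \ref{HSY2011}, and $(4)\Rightarrow(1)$ via Theorem \ref{huang10}. Your added caution about passing from ergodic to general measure preserving systems by ergodic decomposition is a reasonable (and standard) point that the paper's own write-up of Corollary \ref{cor-3.12} leaves implicit, but it does not change the argument.
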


\begin{proof}
By Corollary \ref{cor-3.12} one has that $(1)\Rightarrow (2)$. It
follows from Theorem \ref{HSY2011} that $(2)\Rightarrow (3)
\Rightarrow (4)$. By Theorem \ref{huang10}, one has that
$(4)\Rightarrow (1)$ and completes the proof.
\end{proof}

\section{$SG_d$-sets and $\RP^{[d]}$}
In this section we will describe $\RP^{[d]}$ using the $SG_d$-sets
introduced by Host and Kra in \cite{HK10}. First we recall some
definitions.

\subsection{Sets $SG_d(P)$}

\begin{de}
Let $d\ge 0$ be an integer and let $P=\{p_i\}_i$ be a (finite or
infinite) sequence in $\Z$. The {\em set of sums with gaps of length
less than $d$} of $P$ is the set $SG_d(P)$ of all integers of the
form $$\ep_1p_1+\ep_2p_2+\ldots +\ep_np_n$$ where $n\ge 1$ is an
integer, $\ep_i\in \{0,1\}$ for $1\le i\le n$, the $\ep_i$ are not
all equal to $0$, and the blocks of consecutive $0$'s between two
$1$ have length less than $d$.

A subset $A \subseteq \Z$ is an $SG^*_d$-set if $A \cap SG_d(P)\neq
\emptyset$ for every infinite sequence $P$ in $\Z$.
\end{de}

Note that in this definition, $P$ is a sequence and not a subset of
$\Z$. For example, if $P=\{p_1,p_2,\ldots\}$, then $SG_1(P)$ is the
set of all sums $p_m+p_{m+1}+\ldots +p_n$ of consecutive elements of
$P$, and thus it coincides with the set $\D(S)$ where $S=\{p_1,
p_1+p_2, p_1+p_2+p_3,\ldots\}$. Therefore $SG^*_1$-sets are the same
as $\D^*$-sets.

For a sequence $P$, $SG_2(P)$ consists of all sums of the form
$$\sum_{i=m_0}^{m_1}p_i +\sum_{i=m_1+2}^{m_2}p_i+\ldots+
\sum_{i=m_{k-1}+2}^{m_k}p_i+\sum_{i=m_k+2}^{m_{k+1}}p_i$$ where
$k\in \N$ and $m_0,m_1,\ldots, m_{k+1}$ are positive integers
satisfying $m_{i+1}\ge m_i+2$ for $i=0,1,\ldots,k$.

\medskip

Denote by $SG_d$ the collection of all sets $SG_d(P)$ with $P$
infinite, and $\F_{SG_d}$ the family generated by $SG_d$ for each
$d\in \N$. Moreover, let $\F_{fSG_d}$ be the family containing
arbitrarily long $SG_d(P)$ sets with $P$ finite. That is, $A\in
\F_{fSG_d}$ if and only if there are finite sets $P^i$ with
$|P^i|\lra \infty$ such that $\bigcup_{i=1}^\infty SG_d(P^i)\subset
A$. It is clear that
$$\F_{SG_1}\supset \F_{SG_2}\supset \ldots\supset
\F_{SG_\infty}=:\bigcap_{i=1}^\infty
\F_{SG_i},$$ and
$$\F_{fSG_1}\supset \F_{fSG_2}\supset \ldots\supset
\F_{fSG_\infty}=:\bigcap_{i=1}^\infty \F_{fSG_i}.$$

We now show
\begin{prop} The following statements hold:
\begin{enumerate}
\item $\F_{SG_\infty}=\{A:\exists \ P^i\ \text{infinite for each}\ i\in
\N\ \text{such that}\ A\supset \bigcup_{i=1}^\infty SG_i(P^i)\}.$

\item $\F_{fSG_\infty}=\F_{fip}$.
\end{enumerate}
\end{prop}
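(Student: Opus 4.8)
The plan is to prove the two parts separately, since they are of somewhat different character. For part (1), the set equality is between $\F_{SG_\infty}=\bigcap_{i\ge 1}\F_{SG_i}$ and the family $\G$ of all sets $A$ containing a union $\bigcup_{i\ge 1}SG_i(P^i)$ with each $P^i$ an infinite sequence in $\Z$. The inclusion $\G\subseteq\F_{SG_\infty}$ is the easy direction: if $A\supseteq\bigcup_i SG_i(P^i)$, then for each fixed $k$ we have $A\supseteq SG_k(P^k)$, so $A\in\F_{SG_k}$; as this holds for every $k$, $A\in\F_{SG_\infty}$. For the reverse inclusion, suppose $A\in\F_{SG_i}$ for every $i$; I would pick for each $i$ an infinite sequence $P^i$ in $\Z$ with $SG_i(P^i)\subseteq A$ (this is exactly what membership in the generated family $\F_{SG_i}$ gives, since being hereditary upward means $A$ contains some generator $SG_i(P^i)$). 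Then $\bigcup_{i\ge1}SG_i(P^i)\subseteq A$, so $A\in\G$. So part (1) is essentially unwinding the definitions of the generated family and of an intersection of families; the only thing to be careful about is that a set in the family generated by a collection $\mathcal A$ is precisely one that contains \emph{some} member of $\mathcal A$, which is stated in the Preliminaries.

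For part (2), I want to show $\F_{fSG_\infty}=\F_{fip}$, where $\F_{fSG_\infty}=\bigcap_{i\ge1}\F_{fSG_i}$ and $\F_{fip}$ is the family of sets containing finite IP sets of arbitrarily long length. The inclusion $\F_{fip}\subseteq\F_{fSG_\infty}$ should be straightforward: a finite IP set $FS(\{p_1,\dots,p_n\})$ is exactly the set of all nonempty sums $\sum_{j\in\alpha}p_j$ with no restriction on gaps, which for any $d$ equals $SG_d(\{p_1,\dots,p_n\})$ once $d\ge n$ — more precisely, if we are given a finite sequence $P=\{p_1,\dots,p_n\}$ then $SG_d(P)=FS(P)$ as soon as $d>n$, since the gap condition becomes vacuous. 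So if $A\in\F_{fip}$, pick finite IP sets $FS(P^i)\subseteq A$ with $|P^i|\to\infty$; by passing to these same $P^i$ (or reindexing so that $|P^i|\ge i$, which is possible since the lengths are unbounded), we get $SG_i(P^i)=FS(P^i)\subseteq A$ for each $i$ with $|P^i|\ge i$, giving arbitrarily long $SG_i$-sets inside $A$ for every fixed $i$, i.e. $A\in\F_{fSG_i}$ for all $i$.

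The reverse inclusion $\F_{fSG_\infty}\subseteq\F_{fip}$ is where the real content lies, and I expect it to be the main obstacle. Suppose $A\in\F_{fSG_i}$ for every $i$; I must produce finite IP sets of arbitrarily long length inside $A$. Fix a target length $N$. Using $A\in\F_{fSG_N}$, I get finite sequences $Q^j$ with $|Q^j|\to\infty$ and $SG_N(Q^j)\subseteq A$; choose one, say $Q=Q^j$, with $|Q|=L$ very large (how large to be determined). Inside $SG_N(Q)$ I then need to locate a genuine finite IP set $FS(\{p_1,\dots,p_N\})$ of length $N$. The natural move is to group the $L$ elements of $Q$ into $N$ consecutive blocks $B_1,\dots,B_N$ — say each of size $\lceil L/N\rceil$ — and set $p_k=\sum_{q\in B_k}q$. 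Then for any nonempty $\alpha\subseteq[N]$, the sum $\sum_{k\in\alpha}p_k$ is a sum $\sum_{q\in\bigcup_{k\in\alpha}B_k}\epsilon_q q$ with all $\epsilon_q\in\{0,1\}$; the blocks of consecutive $0$'s in this $\{0,1\}$-pattern occur only between two selected blocks $B_k,B_{k'}$ with all intermediate blocks omitted, and such a gap has length at most $(N-2)\cdot\lceil L/N\rceil+\cdots$, which is generally \emph{not} $<N$. So a naive blocking fails the gap condition, and this is precisely the difficulty: the gap bound in $SG_N$ is a fixed constant $N$, not something that grows with block size. The fix I would pursue is to not omit whole blocks but to keep using the fact that we may choose \emph{which} $SG_i$ we exploit: instead of working in $SG_N(Q)$, work in $SG_i(Q)$ for $i$ large compared to the total length we plan to use, so that the gap constraint is automatically satisfied. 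Concretely, to find a finite IP set of length $N$, use $A\in\F_{fSG_i}$ with $i$ chosen \emph{after} fixing the blocking: take finite $Q$ with $SG_i(Q)\subseteq A$ and $|Q|\ge$ (something), split $Q$ into $N$ blocks, and choose $i$ bigger than the largest possible gap, namely $i>(N-1)\lceil|Q|/N\rceil$; since $\F_{fSG_i}$ still contains arbitrarily long generators, such a $Q$ exists. Then every sum $\sum_{k\in\alpha}p_k$ lies in $SG_i(Q)\subseteq A$ because all its internal gaps of $0$'s have length at most $(N-1)\lceil|Q|/N\rceil<i$. This yields $FS(\{p_1,\dots,p_N\})\subseteq A$, and letting $N\to\infty$ gives $A\in\F_{fip}$. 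The delicate point to get right is the bookkeeping of the gap lengths when several non-adjacent blocks are selected, and ensuring the quantifier order (choose $N$, then the blocking, then $i$, then $Q$) is consistent with the definition of $\F_{fSG_i}$; once that is arranged the argument closes.
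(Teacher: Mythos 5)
Part (1) of your proposal matches the paper's argument exactly and is fine.

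For part (2), the inclusion $\F_{fip}\subseteq\F_{fSG_\infty}$ is essentially right, but the identity you invoke is stated backwards: $SG_d(P)=FS(P)$ holds when $d\ge |P|-1$ (the gap condition becomes vacuous only when the sequence is \emph{short} relative to $d$), not under the condition $|P|\ge d$ that you write. Fortunately you only need the containment $SG_d(P)\subseteq FS(P)$, which holds for every $d$ and every finite $P$ because an $SG_d$-sum is in particular a nonempty subset sum; this gives $SG_d(P^j)\subseteq FS(P^j)\subseteq A$ for all $j$, hence $A\in\F_{fSG_d}$ for every $d$. This is exactly the paper's argument (it sets $A_d=\bigcup_j SG_d(P^j)\subseteq A$).

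The reverse inclusion $\F_{fSG_\infty}\subseteq\F_{fip}$ is where your proposal has a genuine gap --- and, ironically, it is the direction the paper dismisses as clear. Your quantifier order is circular as written: you want to choose $i>(N-1)\lceil|Q|/N\rceil$, but $Q$ is extracted from $\F_{fSG_i}$, so $i$ must be fixed before $|Q|$ is known, and your lower bound on $i$ grows with $|Q|$. You flag the issue yourself but do not resolve it, and with blocks of size $\lceil|Q|/N\rceil$ it cannot be resolved in that order. The repair is to abandon large blocks entirely: to find a finite IP set of length $N+1$ inside $A$, use $A\in\F_{fSG_N}$ to get a finite $Q=\{q_1,\dots,q_L\}$ with $L\ge N+1$ and $SG_N(Q)\subseteq A$, and take $p_k=q_k$ for $k=1,\dots,N+1$ (singleton blocks). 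For any nonempty $\alpha\subseteq\{1,\dots,N+1\}$ the corresponding $0$--$1$ pattern has every block of zeros between two ones of length at most $N-1<N$, so $FS(\{q_1,\dots,q_{N+1}\})\subseteq SG_N(Q)\subseteq A$. Letting $N\to\infty$ gives $A\in\F_{fip}$. With that one-line fix your overall strategy closes and coincides in substance with the paper's.
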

\begin{proof} (1). Assume that $A\in \F_{SG_\infty}$. Then $A\in\bigcap_{i=1}^\infty
\F_{SG_i}$ and hence $A\in \F_{SG_i}$ for each $i\in \N$. Thus for
each $i\in\N$ there is $P^i$ infinite such that $A\supset SG_i(P^i)$
which implies that $A\supset\bigcup_{i=1}^\infty SG_i(P^i).$

Now let $B=\bigcup_{i=1}^\infty SG_i(P^i),$ where $P^i$ infinite for
each $i\in\N$. It is clear that $B\subset \F_{SG_i}$ for each $i$
and thus, $B\in\F_{SG_\infty}$. Since $\F_{SG_\infty}$ is a family,
we conclude that $\{A:\exists P^i\ \text{infinite for each}\ i\in
\N\ \text{such that}\ A\supset \bigcup_{i=1}^\infty
SG_i(P^i)\}\subset \F_{SG_\infty}$.

\medskip

(2) It is clear that $\F_{fSG_\infty}\subset \F_{fip}$. Let $A\in
\F_{fip}$ and without loss of generality assume that
$A=\bigcup_{i=1}^\infty FS (P^i)$ with $P^i=\{p_1^i, \ldots,
p_i^i\}$ and $|P^i|\lra\infty$.

Put $A_d=\bigcup_{i=1}^\infty SG_d(P^i)\subset A$ for $d\in\N$. Then
$A_d\in \F_{fSG_d}$ which implies that $A\in \F_{fSG_d}$ for each
$d\ge 1$ and hence $A\in\F_{fSG_\infty}$. That is, $\F_{fip}\subset
\F_{fSG_\infty}$.
\end{proof}

\subsection{$SG_d$-sets and $\RP^{[d]}$}

The following theorem is the main result of this section.

\begin{thm}\label{huang12}
Let $(X,T)$ be a minimal t.d.s.. Then for any $d\in\N$,
$(x,y)\in\RP^{[d]}$ if and only if $N(x,U)\in \F_{SG_d}$ for each
neighborhood $U$ of $y$. The same holds when $d=\infty$.
\end{thm}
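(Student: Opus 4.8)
The plan is to prove the two directions separately, treating the case $d \in \N$ first and then deducing the $d = \infty$ case by intersecting over all finite $d$. For the direction $(x,y) \in \RP^{[d]} \Rightarrow N(x,U) \in \F_{SG_d}$, I would exploit the cube-description of $\RP^{[d]}$ in Theorem \ref{ShaoYe}(1), namely $(x,y) \in \RP^{[d]}$ iff $(x, y^{[d+1]}_*) \in \overline{\F^{[d+1]}}(x^{[d+1]})$. The key idea is that an element of $SG_d(P)$, with $P = \{p_1, p_2, \dots\}$, is of the form $\sum \ep_i p_i$ with the $\ep_i \in \{0,1\}$ having no $d$ consecutive zeros; such linear combinations are exactly what one can realize by iterating face transformations of level $\le d$ applied in succession. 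Concretely, given a neighbourhood $U$ of $y$, I would build the sequence $P$ inductively: having chosen $p_1, \dots, p_k$ so that a suitable partial orbit of $x^{[d+1]}$ under a word in the face transformations is close to a point whose relevant coordinate lies in $U$, I would use minimality of $(\overline{\F^{[d+1]}}(x^{[d+1]}), \F^{[d+1]})$ (Theorem \ref{ShaoYe}(2)) together with the structure of $\RP^{[d]}$ to find the next $p_{k+1}$. The combinatorial bookkeeping — matching "gaps of length less than $d$" to the inductive depth of face transformations used — is where the argument is delicate, and I expect this to be the main obstacle: one must show that every sum in $SG_d(P)$, not merely the "generic" ones, returns $T^n x$ to $U$, which requires a careful choice of the telescoped points and a uniform-continuity argument at each stage.

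For the converse, $N(x,U) \in \F_{SG_d}$ for every neighbourhood $U$ of $y$ implies $(x,y) \in \RP^{[d]}$, I would argue contrapositively. If $(x,y) \notin \RP^{[d]}$, then by Theorem \ref{ShaoYe}(5) the images $\bar x = \pi(x)$ and $\bar y = \pi(y)$ in the maximal $d$-step nilfactor $X_d = X/\RP^{[d]}$ are distinct, so there is a neighbourhood $U$ of $y$ and a neighbourhood $V$ of $\bar x$ with $\pi^{-1}(V) \cap U = \emptyset$, hence $N(x,U) \cap N(\bar x, V) = \emptyset$. It therefore suffices to produce an infinite sequence $P$ in $\Z$ with $SG_d(P) \subseteq N(\bar x, V)$, i.e. to show that in a system of order $d$ the return-time set to a neighbourhood of any point contains an $SG_d$-set. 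For this I would work inside an inverse limit of $d$-step nilsystems and use the algebraic structure: in a $d$-step nilsystem $(Z,R)$, one can find a sequence of "return times" $p_i$ such that any sum $\sum \ep_i p_i$ with gaps of consecutive zeros shorter than $d$ still returns $z_0$ close to itself — this is essentially because $(d+1)$-fold face/commutator configurations degenerate, reflecting $d$-step nilpotency. I would build $P$ inductively, at stage $k$ choosing $p_k$ so that the finitely many partial sums involving $p_k$ all land in a shrinking neighbourhood, using equicontinuity-type estimates available because nilsystems are distal (so the enveloping semigroup is a group).

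The $d = \infty$ case then follows formally: $\RP^{[\infty]} = \bigcap_{d \ge 1} \RP^{[d]}$ by definition and $\F_{SG_\infty} = \bigcap_{d \ge 1} \F_{SG_d}$, and one checks that "$N(x,U) \in \F_{SG_d}$ for all neighbourhoods $U$ of $y$ and all $d$" is equivalent to "$(x,y) \in \RP^{[d]}$ for all $d$"; the only subtlety is that in the forward direction the sequence $P$ may depend on $d$, which is exactly what the description of $\F_{SG_\infty}$ in the preceding Proposition allows (one takes $P^i$ with $SG_i(P^i) \subseteq N(x,U)$ for each $i$ and unions them). I expect the forward direction's combinatorial core — the inductive construction of $P$ tracking the gap condition against the depth of face transformations — to be the genuinely hard part; the converse is more routine once one knows that in a $d$-step nilsystem return times to a neighbourhood form an $SG_d$-set, which in turn is the content of the Host–Kra circle of ideas around $SG_d$-sets and nilsequences.
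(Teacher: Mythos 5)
Your treatment of the two directions inverts where the real work lies, and the direction you call ``more routine'' contains a genuine logical gap. To show that $N(x,U)\in\F_{SG_d}$ for every neighborhood $U$ of $y$ forces $(x,y)\in\RP^{[d]}$, you pass to the maximal $d$-step nilfactor, find $U$ and $V$ with $N(x,U)\cap N(\bar x,V)=\emptyset$, and then propose to exhibit \emph{one} infinite sequence $P$ with $SG_d(P)\subseteq N(\bar x,V)$. That only shows $N(x,U)$ misses this particular $SG_d$-set; it does not show that $N(x,U)$ contains no $SG_d$-set, which is what $N(x,U)\notin\F_{SG_d}$ means. To close the gap you would need $N(\bar x,V)$ to be an $SG_d^*$-set, i.e.\ to meet \emph{every} $SG_d$-set; since $N(\bar x,V)$ is a Nil$_d$ Bohr$_0$-set, that is precisely Question \ref{Ques-HK} of Host--Kra, which the paper records as open (the Host--Kra theorem you invoke goes in the opposite direction, from $SG_d^*$-sets to piecewise Nil$_d$ Bohr$_0$-sets, and $\F_{SG_2}$ fails the Ramsey property by Appendix \ref{appendix:Ramsey}, so no partition argument rescues the step). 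The paper's proof of this implication is instead a one-liner: any $SG_d(P)$ contains $FS(\{p_1,\dots,p_{d+1}\})$, because $d+1$ consecutive indices have internal gaps of at most $d-1$ zeros, and $N(x,U)$ containing a finite IP-set of length $d+1$ for every neighborhood $U$ of $y$ is equivalent to $(x,y)\in\RP^{[d]}$ by Theorem \ref{ShaoYe}.

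For the forward direction your plan (induct using Theorem \ref{ShaoYe} (1) and (2), telescope with $\sum_i\eta_i<\eta$, match the gap condition to the depth of the construction) is indeed the paper's approach, but you stop exactly where the proof actually happens. The paper chooses blocks $n_1^k,\dots,n_{d+1}^k$ with the control $\rho(T^{n_{d+1}^{k}}x,y)<\eta_{k}$ and $\max_{s\in S_{k-1}}\rho(T^{s+r}x,T^sx)<\eta_{k}$ for all $r\in E_{k}$, and then sets $P_k=n^k_{d+1}+n^{k+1}_{k\ (\mathrm{mod}\ d)}+\cdots+n^{k+d}_{k\ (\mathrm{mod}\ d)}$; the staggered residues are what guarantee that an arbitrary element of $SG_d(P)$, whose consecutive chosen indices differ by at most $d$, decomposes as $a_1+\cdots+a_{i_k-i_1+d+1}$ with $a_1=n^{i_1}_{d+1}$ and $a_j\in E_{i_1+j-1}$, so that the telescoping estimate applies to \emph{every} sum in $SG_d(P)$ and not just the generic ones. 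Supplying that decomposition is the content of the theorem; without it the forward direction is not proved. The $d=\infty$ reduction you describe is fine.
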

\begin{proof}
It is clear that if $N(x,U)\in \F_{SG_d}$ for each neighborhood $U$
of $y$, then it contains some $FS(\{n_i\}_{i=1}^{d+1})$ for each
neighborhood $U$ of $y$ which implies that $(x,y)\in\RP^{[d]}$ by
Theorem \ref{ShaoYe}.

\medskip
Now assume that $(x,y)\in\RP^{[d]}$ for $d\ge 1$. Let for $i\ge 2$
$$A_i=:\{0,1\}^i\setminus \{(0,\ldots, 0,0), (0,\ldots,0, 1)\}$$

The case when $d=1$ was proved by Veech \cite{V68} and our method is
also valid for this case. To make the idea of the proof clearer, we
first show the case when $d=2$ and the general case follows by the
same idea.

\medskip

\noindent {\bf I. The case $d=2$.}

Assume that $(x,y)\in\RP^{[2]}$. Then by Theorem \ref{ShaoYe} (1)
and (2) for each neighborhood $V\times U$ of $(x,y)$, there are
$n_1,n_2, n_3\in \mathbb{N}$ such that
$$T^{\ep_1n_1+\ep_2n_2+\ep_3n_3}x\in V \ \text{and}\ T^{n_3}x\in U,$$
where $(\ep_1,\ep_2,\ep_3)\in A_3.$ For a given $U$, let $\eta>0$
with $B(y,\eta)\subset U$, and take $\eta_i>0$ with
$\sum_{i=1}^\infty\eta_i<\eta$, where $B(y,\eta)=\{x\in X:
\rho(x,y)<\eta\}$.

Choose $n_1^1,n_2^1,n_3^1\in \mathbb{N}$ such that
$$\rho(T^{n_3^1}x,y)<\eta_1\ \text{and}\ \rho(T^{r}x,x)<\eta_1,$$
where $r\in E_1$ with
$$E_1=\{\ep_1n_1^1+\ep_2n_2^1+\ep_3n_3^1: (\ep_1,\ep_2,\ep_3)\in
A_3\}.$$ Let
$$S_1=FS(\{n_1^1,n_2^1,n_3^1\}).$$

Choose $n_1^2,n_2^2,n_3^2\in \mathbb{N}$ such that
$$\rho(T^{n_3^2}x,y)<\eta_2 \text{ and } \max_{s\in
S_1}\rho(T^{s+r}x, T^sx)<\eta_2$$ for each $r\in E_2$ with
$$E_2=\{\ep_1n_1^2+\ep_2n_2^2+\ep_3n_3^2:
(\ep_1,\ep_2,\ep_3)\in A_3\}.$$ Let
$$S_2=FS(\{n_i^j:j=1,2,i=1,2,3\}).$$

Generally when $n_1^i,n_2^i,n_3^i$, $E_i,S_i$ are defined for $1\le
i\le k$ choose $n_1^{k+1},n_2^{k+1},n_3^{k+1}\in \mathbb{N}$ such
that
\begin{equation}\label{huang1}
\rho(T^{n_3^{k+1}}x,y)<\eta_{k+1} \text{ and } \max_{s\in
S_k}\rho(T^{s+r}x, T^sx)<\eta_{k+1}. \end{equation} for each $r\in
E_{k+1}$, where
$$E_{k+1}=\{\ep_1n_1^{k+1}+\ep_2n_{2}^{k+1}+\ep_3n_3^{k+1}:
(\ep_1,\ep_2,\ep_3)\in A_3\}.$$ Let
$$S_{k+1}=FS(\{n_i^j:i=1,2,3, 1\le j\le k+1\}).$$

Now we define a subsequence $P=\{P_k\}$ such that
$$P_1=n_3^1+n_1^2+n_1^3, P_2=n_3^2+n_2^3+n_2^4, P_3=n_3^3+n_1^4+n_1^5,P_4=n_3^4+n_2^5+n_2^6, \cdots$$
That is, \begin{equation*}
    P_k=n^k_{3}+n^{k+1}_{k\ ({\rm mod}\ 2)}+n^{k+2}_{k\  ({\rm
    mod}\ 2)},
\end{equation*}
where we assume $2m\ ({\rm mod}\ 2)=2$ for $m\in \mathbb{N}$.  We
claim that $N(x,U)\supset SG_2(P).$

Let $n\in SG_2(P)$ then $n=\sum_{j=1}^k P_{i_j},$ where $1\le
i_{j+1}-i_j\le 2$ for $1\le j\le k-1$. By induction for $k$, it is
not hard to show that $n$ can be written as
$$n=a_1+a_2+\cdots+a_{i_k-i_1+3}$$ such that $a_1=n_3^{i_1}$, $a_j\in
E_{j+i_1-1}$ for $j=2,3,\cdots,i_k-i_1+1$ and $a_{i_k-i_1+2}\in \{
n_1^{i_k+1},n_2^{i_k+1},n_1^{i_k+1}+n_2^{i_k+1}\}$,
$a_{i_k-i_1+3}=n_{i_k \ ({\rm mod}\ 2)}^{i_k+2}$. In other words,
$n$ can be written as $n=a_1+a_2+\ldots+a_{i_k-i_1+3}$ with
$a_1=n_3^{i_1}$ and $a_j\in E_{i_1+j-1}$ for $2\le j\le i_k-i_1+3$.

Note that $\sum \limits_{\ell=1}^j a_\ell\in S_{i_1+\ell-1}$ and
$a_{j+1}\in E_{i_1+j}$ for $1\le j\le i_k-i_1+2$. Thus by
$(\ref{huang1})$ we have
$$\rho(T^{\sum_{i=1}^ja_i}x,T^{\sum_{i=1}^{j+1}a_i}x)<\eta_{j+i_1}$$
for $1\le j\le i_k-i_1+2$. This implies that
\begin{align*}
\rho(T^nx,y)&\le \rho(T^{\sum_{j=1}^{i_k-i_1+3}a_i}x,
T^{\sum_{j=1}^{i_k-i_1+2}
a_i}x)+\cdots+\rho(T^{n_3^{i_1}+a_2}x,T^{n_3^{i_1}}x)+
\rho(T^{n_3^{i_1}}x,y)\\
&<\sum_{j=0}^{i_k-i_1+2}\eta_{j+i_1}<\eta.
\end{align*}
That is, $n\in N(x,U)$ and hence $N(x,U)\supset SG_2(P).$

\medskip

\noindent {\bf II. The general case.}

Generally assume that $(x,y)\in\RP^{[d]}$ with $d\ge 2$. Then by
Theorem \ref{ShaoYe} (1) and (2) for each neighborhood $V\times U$
of $(x,y)$, there are $n_1,n_2,\cdots, n_{d+1}\in \mathbb{N}$ such
that
$$T^{\ep_1n_1+\ep_2n_2+\cdots+\ep_{d+1}n_{d+1}}x\in V \ \text{and}\ T^{n_{d+1}}x\in U,$$
where $(\ep_1,\ep_2,\cdots, \ep_{d+1})\in A_{d+1}.$ For a given $U$,
let $\eta>0$ with $B(y,\eta)\subset U$, and take $\eta_i>0$ with
$\sum_{i=1}^\infty\eta_i<\eta$.

Choose $n_1^1,n_2^1,\cdots,n_{d+1}^1\in \mathbb{N}$ such that
$\rho(T^{n_{d+1}^1}x,y)<\eta_1\ \text{and}\ \rho(T^{r}x,x)<\eta_1$
where $r\in E_1$ with
$$E_1=\{\ep_1n_1^1+\ep_2n_2^1+\cdots+\ep_{d+1}n_{d+1}^1: (\ep_1,\ep_2,\cdots,\ep_{d+1})\in
A_{d+1}\}.$$ Let
$$S_1=FS(\{n^1_1,\cdots,n^1_{d+1}\}).$$

Choose $n_1^2,n_2^2,\cdots,n_{d+1}^2$ such that
$$\rho(T^{n_{d+1}^2}x,y)<\eta_2, \text{and}\ \max_{s\in
S_1}\rho(T^{s+r}x, T^sx)<\eta_2$$ for each $r\in E_2$ with
$$E_2=\{\ep_1n_1^2+\ep_2n_2^2+\cdots+\ep_{d+1}n_{d+1}^2:
(\ep_1,\ep_2,\cdots,\ep_{d+1})\in A_{d+1}\}.$$ Let
$$S_2=FS(\{n_1^1,\cdots, n_{d+1}^1,n_1^2,\cdots,n_{d+1}^2\}).$$

Generally when $n_1^i,\ldots,n_{d+1}^i$, $E_i,S_i$ are defined for
$1\le i\le k$ choose $n_1^{k+1},\cdots,n_{d+1}^{k+1}\in \mathbb{N}$
such that
\begin{equation}\label{huang2}
\rho(T^{n_{d+1}^{k+1}}x,y)<\eta_{k+1}, \text{and}\ \max_{s\in
S_k}\rho(T^{s+r}x, T^sx)<\eta_{k+1}. \end{equation} for each $r\in
E_{k+1}$, where
$$E_{k+1}=\{\ep_1n_1^{k+1}+\ep_2n_{2}^{k+1}+\ldots+\ep_{d+1}n_{d+1}^{k+1}:
(\ep_1,\ep_2,\ldots,\ep_{d+1})\in A_{d+1}\}.$$ Let
$$S_{k+1}=FS(\{n_i^j:i=1,\ldots, d+1, 1\le j\le k+1\}).$$

Now we define a subsequence $P=\{P_k\}$ such that
\begin{eqnarray*}
P_1&=&n_{d+1}^1+n_1^2+\cdots+n_1^{d+1},
P_2=n_{d+1}^2+n_2^3+\cdots+n_2^{d+2},
\cdots,\\ P_d&=&n_{d+1}^d+n_d^{d+1}+\cdots+n_d^{2d},\\
P_{d+1}&=& n_{d+1}^{d+1}+n_1^{d+2}+\cdots+n_1^{2d+1},
P_{d+2}=n_{d+1}^{d+2}+n_2^{d+3}+\cdots+n_2^{2d+2},\cdots,\\
P_{2d}&=&n_{d+1}^{2d}+n_d^{2d+1}+\cdots+n_d^{3d}, \cdots
\end{eqnarray*}
That is,
\begin{equation*}
    P_k=n^k_{d+1}+n^{k+1}_{k\ ({\rm mod}\ d)}+\cdots+n^{k+d}_{k\  ({\rm
    mod}\
    d)},
\end{equation*}
where we assume $dm\ ({\rm mod}\ d)=d$ for $m\in \mathbb{N}$.

We claim that $N(x,U)\supset SG_d(P).$ Let $n\in SG_d(P)$ then
$n=\sum_{j=1}^k P_{i_j},$ where $1\le i_{j+1}-i_j\le d$ for $1\le
j\le k-1$. By induction for $k$, it is not hard to show that $n$ can
be written as
$$n=a_1+a_2+\cdots+a_{i_k-i_1+d+1}$$ such that $a_1=n_{d+1}^{i_1}$, $a_j\in
E_{j+i_1-1}$ for $j=2,3,\cdots,i_k-i_1+1$ and
$$a_{i_k-i_1+1+r}\in FS(\Big\{ n^{i_k+r}_\ell:  \ell \in \{ 1,2,\cdots,d\} \setminus \bigcup_{j=1}^{r-1} \{ i_k+j \ ({\rm mod}\ d)\}\Big\})$$
for $1\le r \le d$. In other words, $n$ can be written as
$n=a_1+a_2+\ldots+a_{i_k-i_1+d+1}$ with $a_1=n_{d+1}^{i_1}$ and
$a_j\in E_{i_1+j-1}$ for $2\le j\le i_k-i_1+d+1$.

 Note that $\sum_{\ell=1}^j a_\ell\in
S_{i_1+\ell-1}$ and  $a_{j+1}\in E_{i_1+j}$ for $1\le j\le
i_k-i_1+d$. Thus by $(\ref{huang2})$ we have
$$\rho(T^{\sum_{i=1}^ja_i}x,T^{\sum_{i=1}^{j+1}a_i}x)<\eta_{i_1+j}$$
for $1\le j\le i_k-i_1+d$. This implies that
\begin{align*}
\rho(T^nx,y)&\le \rho(T^{\sum_{j=1}^{i_k-i_1+d+1}a_i}x,
T^{\sum_{j=1}^{i_k-i_1+d}
a_i}x)+\cdots+ \rho(T^{n_{d+1}^{i_1}}x,y)\\
&<\sum_{j=0}^{i_k-i_1+d}\eta_{j+i_1}<\eta.
\end{align*}
That is, $n\in N(x,U)$ and hence $N(x,U)\supset SG_d(P)$ which
implies that $N(x,U)\in \F_{SG_d}$. The proof is completed.
\end{proof}



\section{Cubic version of multiple recurrence sets and $\RP^{[d]}$}

Cubic version of multiple ergodic averages was studied in
\cite{HK05}, and also was proved very useful in some other questions
\cite{HK09, HK10, HKM}.

In this section we will discuss the question how to describe
$\RP^{[d]}$ using cubic version of multiple recurrence sets. Since
by Theorem \ref{ShaoYe} one can use dynamical parallelepipeds to
characterize $\RP^{[d]}$, it seems natural to describe $\RP^{[d]}$
using the cubic version of multiple recurrence sets.

\subsection{Cubic version of multiple Birkhoff recurrence sets}

First we give definitions for the cubic version of multiple
recurrence sets. We leave  the equivalent statements in viewpoint of
intersective sets in Appendix \ref{appendix:Intersective}.

\subsubsection{Birkhoff recurrence sets}
First we recall the classical definition. Let $P\subset \Z$. $P$ is
called a {\em Birkhoff recurrence set} (or a {\em set of topological
recurrence}) if whenever $(X, T)$ is a minimal t.d.s. and
$U\subseteq X$ a nonempty open set, then $P\cap N(U,U)\neq
\emptyset$. Let $\F_{Bir}$ denote the collection of Birkhoff
recurrence subsets of $\Z$. An alternative definition is that for
any t.d.s. $(X,T)$ there are $\{n_i\}\subset P$ and $x\in X$ such
that $T^{n_i}x\lra x$. Now we generalize the above definition to the
higher dimension.

\begin{de}\label{def-cubic}
Let $P\subset \Z$ and $d\in \N$. $P$ is called a {\em Birkhoff
recurrence set of order $d$} (or a {\em set of topological
recurrence of order $d$}) if whenever $(X, T)$ is a t.d.s. there are
$x\in X$ and $\{n_i^j\}_{j=1}^d\subset P$, $i\in \N$, such that
$FS(\{n_i^j\}_{j=1}^d)\subset P, i\in \N$ and for each given
$\ep=(\ep_1,\ldots,\ep_d)\in \{0,1\}^d$, $T^{m_i}x\lra x$, where
$m_i=\ep_1n_i^1+\ldots+\ep_dn_i^d$, $i\in\N$. A subset $F$ of $\Z$
is a {\em Birkhoff recurrence set of order $\infty$} if it is a
Birkhoff recurrence set of order $d$ for any $d\ge 1$.

For example, when $d=2$ this means that there are sequence $\{n_i\},
\{m_i\}\subset P$ and $x\in X$ such that $\{n_i+m_i\}\subset P$ and
$T^{n_i}x\lra x, T^{m_i}x\lra x$, $T^{n_i+m_i}x\lra x$.
\end{de}

Similarly we can define (topologically) intersective of order $d$
and intersective of order $d$ (see Appendix
\ref{appendix:Intersective}). We have
\begin{prop}\label{birkhoff-equi2} Let $d\in \N$. The following statements are equivalent:
\begin{enumerate}
\item $P$ is a {Birkhoff recurrence set} of order $d$.

\item Whenever $(X, T)$ is a minimal t.d.s. and
$U\subseteq X$ a nonempty open set, then there are $n_1,\ldots,n_d$
with $FS(\{n_i\}_{i=1}^d)\subset P$ such that
$$U \cap \big(\bigcap_{n\in FS(\{n_i\}_{i=1}^d)}T^{-n}U \big)\neq \emptyset.$$

\item $P$ is (topologically) intersective of order $d$.
\end{enumerate}
\end{prop}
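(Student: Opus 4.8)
The plan is to prove the cycle of implications $(1)\Rightarrow(2)\Rightarrow(3)\Rightarrow(1)$, where $(3)$ is the "intersective of order $d$" formulation whose precise statement is deferred to Appendix~\ref{appendix:Intersective} (the topological analogue of $d$-intersectivity: every syndetic set $A\subset\Z$ contains a configuration $\{a\}\cup\big(a+FS(\{n_i\}_{i=1}^d)\big)$ with $FS(\{n_i\}_{i=1}^d)\subset P$). The guiding principle, exactly as in Proposition~\ref{birkhoff-equi1}, is the standard translation between "recurrence along an orbit in an arbitrary system" and "return-time intersections of open sets in a minimal system," carried out here for the $FS$-cube configuration instead of the arithmetic progression $\{n,2n,\ldots,dn\}$.

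For $(1)\Rightarrow(2)$: given a minimal $(X,T)$ and a nonempty open $U$, apply Definition~\ref{def-cubic} to the system $(X,T)$ itself to obtain $x\in X$ and $\{n_i^j\}$ with $FS(\{n_i^j\}_{j=1}^d)\subset P$ and $T^{m_i}x\to x$ simultaneously for all $m_i=\ep_1n_i^1+\cdots+\ep_dn_i^d$, $\ep\in\{0,1\}^d$. By minimality some $T^{-l}x$ lies in $U$; replacing $U$ by $T^{-l}U$ (which only translates the return-time set, and the statement to be proved is translation-invariant after composing with a power of $T$), and choosing $i$ large enough that all finitely many points $T^{m_i}x$ are within the Lebesgue number of $U$ at $x$, we get $x\in U\cap\bigcap_{n\in FS(\{n_i\}_{i=1}^d)}T^{-n}U$ with $n_j:=n_i^j$. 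For $(2)\Rightarrow(3)$ one unwinds the definition of topological intersectivity: a syndetic set $A$ is exactly $N(U,U)$-type data for the orbit-closure system it generates — more precisely, one builds from $A$ a minimal subsystem of the shift on $\{0,1\}^\Z$ containing a point whose return times to the cylinder $[1]$ are controlled by $A$, applies $(2)$ to that minimal system and that cylinder, and reads off the required $FS$-configuration inside $A$; this is the routine "syndetic set $\leftrightarrow$ minimal system" correspondence and I would state it with a reference to Appendix~\ref{appendix:Intersective}.

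The main work is $(3)\Rightarrow(1)$, which mirrors the $(3)\Rightarrow(2)$ step of Proposition~\ref{birkhoff-equi1} but needs the simultaneous multi-parameter recurrence. Given an arbitrary t.d.s.\ $(X,T)$, pass to a minimal subsystem (recurrence is only being asserted, so shrinking $X$ is harmless). For each $k\in\N$ define
\[
W_k=\{x\in X:\ \exists\, n_1,\ldots,n_d\in\Z\ \text{with}\ FS(\{n_i\}_{i=1}^d)\subset P\ \text{and}\ \rho(T^m x,x)<\tfrac1k\ \text{for all}\ m\in FS(\{n_i\}_{i=1}^d)\}.
\]
Each $W_k$ is open; it is dense because for any nonempty open $U$, minimality makes $N(x_0,U)$ syndetic for every $x_0$, so topological intersectivity of order $d$ (applied to that syndetic set, together with a uniform-continuity argument to pass from the symbolic configuration back to the metric condition) produces a point of $U$ lying in $W_k$. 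By Baire category $\bigcap_k W_k\neq\emptyset$; a point $x$ there, together with a diagonal choice of the witnessing tuples $\{n_i^j\}_{j=1}^d$ as $k\to\infty$, satisfies $T^{m_i}x\to x$ for every $\ep\in\{0,1\}^d$ simultaneously, and $FS(\{n_i^j\}_{j=1}^d)\subset P$ for each $i$. That is exactly Definition~\ref{def-cubic}, proving $(1)$. The one point demanding care — the expected main obstacle — is making the passage from the combinatorial/symbolic statement $(3)$ back to the metric recurrence condition genuinely uniform over the whole cube $\{0,1\}^d$ at once, i.e.\ that a single tuple $(n_1,\ldots,n_d)$ works for all $2^d-1$ sums; this is handled by feeding $(3)$ a sufficiently fine finite open cover and invoking uniform continuity of $T^m$ on the finitely many relevant exponents, but it is where the details of the Appendix definition must be invoked precisely, so I would defer the bookkeeping there and cite it.
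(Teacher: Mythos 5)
Your proposal is correct and follows essentially the same route as the paper: the translation/Baire-category argument from Proposition~\ref{birkhoff-equi1} for the equivalence of (1) and (2), and the syndetic-set/symbolic-orbit-closure correspondence of Appendix~\ref{appendix:Intersective} for bringing in (3), merely reorganized as a cycle $(1)\Rightarrow(2)\Rightarrow(3)\Rightarrow(1)$. The only remark worth making is that the ``main obstacle'' you flag at the end is not actually one: the definition of topological intersectivity of order $d$ already supplies a single tuple $(n_1,\ldots,n_d)$ with $a+FS(\{n_i\}_{i=1}^d)\subset F$, so all $2^d-1$ sums are handled at once and no extra uniformity argument is needed --- taking $U$ of diameter less than $1/k$ and $F=N(x_0,U)$ immediately puts $T^ax_0$ in $W_k$.
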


\begin{proof} $(1)\Leftrightarrow (2)$ follows from the proof of Proposition
\ref{birkhoff-equi1}. See Appendix \ref{appendix:Intersective} for
the proof $(1)\Leftrightarrow (3).$
\end{proof}

\begin{rem}
From the above proof, one can see that for a minimal t.d.s. the set
of recurrent point in the Definition \ref{def-cubic} is residual.
\end{rem}

\subsubsection{Some properties of Birkhoff sequences of order $d$ }
The family generated by the collection of all Birkhoff recurrence
sets of order $d$ is denoted by $\F_{B_d}$. We have
$$\F_{B_1}\supset \F_{B_2}\supset \ldots \supset \F_{B_d}\supset \ldots\supset
\F_{B_\infty}=:\bigcap_{d=1}^\infty \F_{B_d}.$$

We will show later (after Proposition \ref{longproof}) that
\begin{prop}\label{birkhoff} $\F_{B_\infty}=\F_{fip}.$
\end{prop}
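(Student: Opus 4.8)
\textbf{Proof proposal for Proposition \ref{birkhoff}.}

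The plan is to prove the two inclusions $\F_{fip}\subset\F_{B_\infty}$ and $\F_{B_\infty}\subset\F_{fip}$ separately, the first being the easy direction and the second requiring a diagonalization argument. For $\F_{fip}\subset\F_{B_\infty}$, let $A\in\F_{fip}$, so $A$ contains finite IP sets $FS(\{p_1^k,\ldots,p_{m_k}^k\})$ with $m_k\to\infty$. I would show that for each fixed $d$, $A$ is a Birkhoff recurrence set of order $d$: given any t.d.s. $(X,T)$, take a minimal subsystem and use the multiple Birkhoff recurrence phenomenon together with an $FS$-set structure — more precisely, one chooses $n_i^1,\ldots,n_i^d$ among the generators $\{p_j^k\}$ of a sufficiently long finite IP set so that $FS(\{n_i^j\}_{j=1}^d)$ is entirely contained in $A$ (which holds automatically since $FS$ of a sub-collection of the generators sits inside $FS$ of all of them), and so that $T^{m}x$ is within $1/i$ of $x$ for each of the finitely many partial sums $m=\ep_1 n_i^1+\cdots+\ep_d n_i^d$. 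The existence of such a point $x$ and such generators follows from the topological multiple recurrence theorem quoted in the introduction (the second $\bullet$ before Definition of $d$-recurrence) applied inside an IP-sense refinement — equivalently, one invokes the IP form of topological multiple recurrence (IP-Birkhoff), which gives recurrence along IP sets. This shows $A\in\F_{B_d}$ for every $d$, hence $A\in\F_{B_\infty}$.

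For the reverse inclusion $\F_{B_\infty}\subset\F_{fip}$, let $A\in\F_{B_\infty}$; I want to produce, for each $N\in\N$, a finite IP set $FS(\{q_1,\ldots,q_N\})\subset A$. The idea is to exploit membership in $\F_{B_d}$ for $d=N$ (or larger) against a cleverly chosen test system. Here I would use a universal minimal system, or more concretely the system built from $A$ itself: since $A\in\F_{B_N}$, for \emph{every} t.d.s. there is a recurrent point with the required $FS$-structure landing in $A$. Choosing the test system to be (a minimal subsystem of) the orbit closure of the indicator-type point of $A$ in $\{0,1\}^\Z$ under the shift — or a suitable symbolic system coding the complement of $A$ — forces the returns recorded by $FS(\{n_i^1,\ldots,n_i^N\})$ to actually lie in $A$ rather than merely approximate a return. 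This is the standard translation between "$P$ is a recurrence set" and "$P$ meets every set of return times", here upgraded to the cubic/$FS$ setting. Extracting from this a genuine finite IP set of length $N$ contained in $A$, and letting $N\to\infty$, yields $A\in\F_{fip}$.

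The main obstacle I anticipate is the reverse inclusion: making the jump from "approximate recurrence with $FS$-structure in an arbitrary system" to "an honest finite IP set inside $A$" requires the right choice of test system and a careful use of minimality so that the near-return times are forced into $A$ up to a controlled error, and then a compactness/pigeonhole step to get $N$ generators rather than just a bounded-length configuration. I expect this is exactly the point where the proof must be deferred to "after Proposition \ref{longproof}" as the text promises, since it presumably relies on the structural analysis of $SG_d$ and $\F_{fSG_d}=\F_{fip}$ established there: indeed, combining $\F_{fSG_\infty}=\F_{fip}$ with the relation between Birkhoff recurrence sets of order $d$ and $SG_d$-type return-time sets (via Theorem \ref{huang12} and Theorem \ref{ShaoYe}) should short-circuit the hard direction — one shows $\F_{B_d}$ sits between $\F_{fSG_d}$-type families, then intersects over $d$. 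So in the final writeup I would phrase the reverse inclusion as a corollary of the $SG_d$ machinery rather than reproving recurrence from scratch.
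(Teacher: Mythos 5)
You have the two inclusions' difficulties exactly reversed, and this matters for whether your plan goes through. The inclusion $\F_{B_\infty}\subset\F_{fip}$, which you treat as the hard direction requiring a test system built from $A$ and a symbolic coding, is immediate from Definition \ref{def-cubic}: a Birkhoff recurrence set of order $d$ is \emph{required by definition} to contain $FS(\{n_i^j\}_{j=1}^d)$, so any $A\in\F_{B_d}$ already contains a finite IP set of length $d$, and $A\in\F_{B_\infty}$ therefore contains finite IP sets of arbitrary length. No translation between "recurrence set" and "return-time set," no minimality, and no compactness/pigeonhole step are needed; the "main obstacle" you anticipate is not an obstacle. The paper dispatches this direction with the single sentence "it is clear that $\F_{B_\infty}\subset\F_{fip}$."

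The genuine content is the direction you call easy, $\F_{fip}\subset\F_{B_\infty}$, and your sketch for it has a real gap. The topological multiple recurrence theorem you cite (the second bullet before the definition of $d$-recurrence) concerns recurrence along arithmetic progressions $T^{n},T^{2n},\ldots,T^{dn}$, not cubic recurrence along $FS(\{n_1,\ldots,n_d\})$, and more importantly it gives no control whatsoever on where the times $n_i$ sit; what you need is that from a finite IP set of length $\ell$ (with $\ell$ large depending on the system and the open set) one can always extract $d$ generators $q_1,\ldots,q_d$, each a sum of the original generators, with $U\cap\bigcap_{n\in FS(\{q_i\}_{i=1}^d)}T^{-n}U\neq\emptyset$. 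Invoking "IP-Birkhoff" does not close this: the IP multiple recurrence theorems are stated for infinite IP sets, and passing to finite IP sets of growing length requires a quantitative finitization you do not supply. The paper fills exactly this hole by a different, measure-theoretic route: Proposition \ref{longproof}(1) proves $\F_{fip}\subset\F_{P_d}$ for every $d$ using the Gillis-type intersection lemma (Proposition \ref{infinite-type}), whose constants $N(a,k,\ep)$ provide the needed quantitative bound on how long the finite IP set must be; combined with the easy observation that $\F_{P_d}\subset\F_{B_d}$ (a minimal system carries an invariant measure of full support), this gives $\F_{fip}\subset\F_{P_\infty}\subset\F_{B_\infty}$. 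If you want to keep your proof purely topological you would still, in effect, have to pass to an invariant measure to get the quantitative control, which collapses your argument into the paper's. Your closing suggestion to route the argument through the $SG_d$/$\F_{fSG_d}$ machinery points at the right section of the paper but at the wrong direction: Proposition \ref{longproof} is used precisely for $\F_{fip}\subset\F_{B_\infty}$, not for $\F_{B_\infty}\subset\F_{fip}$.
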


\subsection{Birkhoff recurrence sets and $\RP^{[d]}$}

We have the following theorem

\begin{thm}\label{birkhoff}
Let $(X,T)$ be a minimal t.d.s.. Then for any
$d\in\N\cup\{\infty\}$, $(x,y)\in\RP^{[d]}$ if and only if
$N(x,U)\in \F_{B_d}$ for each neighborhood $U$ of $y$.
\end{thm}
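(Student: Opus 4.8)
The plan is to prove the two implications separately, using Theorem~\ref{ShaoYe} to translate $\RP^{[d]}$ into a statement about orbit closures under the face group $\F^{[d+1]}$, and then to read off the combinatorial structure $FS(\{n_i^j\}_{j=1}^d)$ from the definition of $\Q^{[d+1]}$.

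For the easy direction, suppose $N(x,U)\in\F_{B_d}$ for every neighborhood $U$ of $y$. Since a Birkhoff recurrence set of order $d$ by definition contains, for some $x\in X$ and some sequences, a union $\bigcup_i FS(\{n_i^j\}_{j=1}^d)$ with all the relevant $T^{m_i}x\to x$ — but here we only need the weaker consequence that $N(x,U)$ meets every set of the shape $FS(\{n^j\}_{j=1}^d)$ arising from a system, one checks directly that if $N(x,U)$ always lies in $\F_{B_d}$ then in particular it always contains a finite sum set $FS(\{n_1,\dots,n_d\})$: indeed $\F_{B_d}$ is generated by sets that contain such sum-sets, so any member contains one. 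Taking $U$ to run through a neighborhood basis of $y$ and extracting, we get vectors $(n_1,\dots,n_d)\in\Z^d$ with $T^{\ep_1 n_1+\dots+\ep_d n_d}x$ arbitrarily close to $y$ for every nonempty $\ep\in\{0,1\}^d$ and to $x$ for $\ep=\mathbf 0$; by the characterization in Theorem~\ref{ShaoYe}(1) (via $(x,y^{[d+1]}_*)\in\overline{\F^{[d+1]}}(x^{[d+1]})$, reading the face-group orbit through the definition of $\Q^{[d+1]}$) this says exactly $(x,y)\in\RP^{[d]}$. The case $d=\infty$ follows since $\F_{B_\infty}=\bigcap_d\F_{B_d}$ and $\RP^{[\infty]}=\bigcap_d\RP^{[d]}$.

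For the hard direction, assume $(x,y)\in\RP^{[d]}$ and fix a neighborhood $U$ of $y$; I must produce, for an \emph{arbitrary} t.d.s.\ $(Z,S)$, a point $z\in Z$ and sequences $\{n_i^j\}$ with $FS(\{n_i^j\}_{j=1}^d)\subset N(x,U)$, along which $S^{m_i}z\to z$ for every $\ep$. The natural route is to mimic the proof of Theorem~\ref{huang12}: build the sequences $n_1^k,\dots,n_{d+1}^k$ inductively, at each stage using Theorem~\ref{ShaoYe}(1)+(2) (minimality of $(\overline{\F^{[d+1]}}(x^{[d+1]}),\F^{[d+1]})$) to find a cube returning $x$ near $x$ along all proper faces and $x$ near $y$ along the top face, with errors summable; simultaneously, in the auxiliary system $(Z,S)$ one runs a Birkhoff-type recurrence argument (as in Proposition~\ref{birkhoff-equi1}, via a Baire-category/residual-set argument on $Z$) to arrange that the \emph{same} integers give recurrence for a point of $Z$. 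The cleanest packaging is to pass to the product $(X\times Z, T\times S)$, take a minimal subsystem $M$, lift $(x,y)$ to a regionally proximal pair of order $d$ in $M$ using Theorem~\ref{ShaoYe}(4) (surjectivity of $\RP^{[d]}$ under factor maps), and then apply the $SG$-type construction of Theorem~\ref{huang12} inside $M$: the resulting $FS$-sets and sum-sets project to $N(x,U)$ in the $X$-coordinate and to the desired return-time structure in the $Z$-coordinate.

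The main obstacle is the bookkeeping in the inductive construction: unlike $SG_d$-sets, a Birkhoff recurrence set of order $d$ demands that \emph{all} $2^d$ linear combinations $\ep\cdot n_i$ lie in the set and give recurrence, so the telescoping-error estimate must control $2^d$ partial sums coherently at every stage, and one must be careful that the sequences chosen to handle $(Z,S)$ do not conflict with those handling $U$. I expect this to be handled by choosing, at step $k$, the $(d+1)$-tuple simultaneously good for finitely many constraints coming from both coordinates (possible because the orbit closure in $M$ is minimal under $\F^{[d+1]}$, so the relevant open condition is nonempty), exactly as the $\eta_k$-argument in the proof of Theorem~\ref{huang12} is run but now with the cube conditions read off in $M$ rather than in $X$. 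For $d=\infty$ one argues as before using $\F_{B_\infty}=\bigcap_d\F_{B_d}$ together with $\RP^{[\infty]}=\bigcap_d\RP^{[d]}$, noting that a single pair in $\bigcap_d\RP^{[d]}$ yields, for each $d$, the order-$d$ structure, and a diagonal choice of the sequences works.
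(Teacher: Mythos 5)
Your forward ($\Leftarrow$) direction has a genuine off-by-one gap. From $N(x,U)\in\F_{B_d}$ you extract only the containment $FS(\{n_1,\dots,n_d\})\subset N(x,U)$ and explicitly discard the recurrence condition, but ``$N(x,U)$ contains a finite IP-set of length $d$ for every neighborhood $U$ of $y$'' is the characterization of $(x,y)\in\RP^{[d-1]}$, not of $\RP^{[d]}$: Theorem \ref{shaoye} (equivalently Theorem \ref{ShaoYe}(1), which is a statement about $(d+1)$-cubes, i.e.\ about $\overline{\F^{[d+1]}}(x^{[d+1]})$) requires an IP-set of length $d+1$. The data you threw away is exactly what closes this gap. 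The paper's argument keeps it: applying Proposition \ref{birkhoff-equi2}(2) to the minimal system $(X,T)$ itself and to a small neighborhood $U$ of $y$ produces $n_1,\dots,n_d$ with $FS(\{n_i\}_{i=1}^d)\subset N(x,U)$ \emph{and} a point $y'\in U\cap\bigcap_{n\in FS(\{n_i\}_{i=1}^d)}T^{-n}U$; then $x'=x$ together with this $y'$ verifies the definition of $\RP^{[d]}$ directly, since $T^{{\bf n}\cdot \ep}x$ and $T^{{\bf n}\cdot \ep}y'$ both lie in $U$ for every nonempty $\ep\in\{0,1\}^d$. The auxiliary point $y'$ supplies the missing $(d+1)$-st degree of freedom; you need to restore this step.

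For the converse, your overall plan (pass to a minimal subsystem $\Lambda$ of $X\times Z$, lift $(x,y)$ to a pair in $\RP^{[d]}(\Lambda)$ via Theorem \ref{ShaoYe}(4), and read off return times in both coordinates) coincides with the paper's, but the paper's execution is far lighter than the $SG_d$-type induction you propose: it applies the finite-IP characterization once to get $FS(\{n_i\}_{i=1}^{d+1})\subset N\big((x,z_1),U\times R^{-m}V\big)$ and then sacrifices the last generator, observing that $w=R^{\,n_{d+1}+m}z_1$ lies in $V\cap\bigcap_{n\in FS(\{n_i\}_{i=1}^d)}R^{-n}V$ while $FS(\{n_i\}_{i=1}^d)\subset N(x,U)$. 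No telescoping $\eta_k$-estimates or coherent control of the $2^d$ partial sums is needed, and your sketch leaves precisely that simultaneous bookkeeping in the two coordinates unverified; I would adopt the one-shot argument instead.
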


\begin{proof} We first show the case when $d\in\N$. ($\Leftarrow$)  Let $d\in \N$ and assume $N(x,U)\in \F_{B_d}.$
Then there are $FS(\{n_i\}_{i=1}^d)\subset N(x,U)$ such that $U\cap
\bigcap_{n\in FS(\{n_i\}_{i=1}^d)}T^{-n}U\neq \emptyset.$ This means
that there is $y'\in U$ such that $T^ny'\in U$ for any $n\in
FS(\{n_i\}_{i=1}^d)$. Since $T^nx\in U$ for any $n\in
FS(\{n_i\}_{i=1}^d)$, we conclude that $(x,y)\in\RP^{[d]}$ by the
definition.

\medskip

($\Rightarrow$) Assume that $(x,y)\in\RP^{[d]}$ and $U$ is a
neighborhood of $y$. Let $(Z, R)$ be a minimal  t.d.s., $V$ be a
non-empty open subset of $Z$ and $\Lambda\subset X\times Z$ be a
minimal subsystem. Let $\pi:\Lambda\lra X$ be the projection. Since
$(x,y)\in\RP^{[d]}$ there are $z_1,z_2\in Z$ such that
$((x,z_1),(y,z_2))\in \RP^{[d]}(\Lambda, T\times R)$ by Theorem
\ref{ShaoYe}. Let $m\in\N$ such that $T^{-m}V$ be a neighborhood of
$z_2$. Then $U\times T^{-m}V$ is a neighborhood of $(y,z_2)$. By
Theorem \ref{ShaoYe}, there are $n_1,\ldots,n_{d+1}$ such that
$$N((x,z_1),U\times T^{-m}V)\supset FS(\{n_i\}_{i=1}^{d+1}).$$ This
implies that $\bigcap_{n\in FS(\{n_i\}_{i=1}^{d+1})}
T^{-n-m}V\not=\emptyset.$ Thus, $V\cap \bigcap_{n\in
FS(\{n_i\}_{i=1}^{d})} T^{-n}V\not=\emptyset,$ i.e. $N(x,U)\in
\F_{B_d}$.

The case $d=\infty$ is followed from the result for $d\in \N$ and
the definitions.
\end{proof}

\subsection{Cubic version of multiple Poincar\'e recurrence sets}

\subsubsection{Poincar\'e recurrence sets}

Now we give the cubic version of multiple Poincar\'e recurrence
sets.

\begin{de}
For $d\in \N$, a subset $F$ of $\Z$ is a {\em Poincar\'e sequence of
order $d$} if for each $(X,\mathcal{B},\mu,T)$ and $A\in
\mathcal{B}$ with positive measure there are $n_1,\ldots,n_d\in\Z$
such that $FS(\{n_i\}_{i=1}^d)\subset F$ and
$$\mu(A\cap\big (\bigcap _{n\in
FS(\{n_i\}_{i=1}^d)} T^{-n}A\big ))>0.$$

A subset $F$ of $\Z$ is a {\em Poincar\'e sequence of order
$\infty$} if it is a Poincar\'e sequence of order $d$ for any $d\ge
1$.
\end{de}

\begin{rem}
We remark that $F$ is a Poincar\'e sequence of order $1$ iff it is a
Poincar\'e sequence. Moreover, a Poincar\'e sequence of order $1$
does not imply that it is a Poincar\'e sequence of order $2$. For
example, $\{n^k: n\in\mathbb{N}\}$ ($k\ge 3$) is a Poincar\'e
sequence \cite{F}, it is not a Poincar\'e sequence of order $2$ by
the famous Fermat Last Theorem.
\end{rem}

\subsubsection{Some properties of Poincar\'e sequences of order $d$ }

Let for $d\in \N\cup\{\infty\}$, $\F_{P_d}$ be the family generated
by the collection of all Poincar\'e sequences of order $d$. Thus
$$\F_{P_1}=\F_{Poi}\supset \F_{P_2}\supset \ldots \supset
\F_{P_d}\supset \ldots\supset \F_{P_\infty}=:\bigcap_{d=1}^\infty
\F_{P_d}.$$

We want to show that $\F_{P_\infty}=\F_{fip}$. It is clear that
$\F_{P_\infty}\subset\F_{fip}$. To show $\F_{P_d}\supset\F_{fip},$
we need the following proposition, for a proof see \cite{G} or
\cite{HLY}.

\begin{prop}\label{infinite-type}
Let $(X,\mathcal{B},\mu)$ be a
probability space, and $\{E_i\}_{i=1}^\infty$ be a sequence of
measurable sets with $\mu(E_i)\ge a>0$ for some constant $a$ and any
$i\in\N$. Then for any $k\ge 1$ and $\ep>0$ there is $N=N(a, k,
\ep)$ such that for any tuple $\{ s_1<s_2<\cdots <s_n\}$ with $n\ge
N$ there exist $1\le t_1<t_2<\cdots<t_k\le n$ with
\begin{align}\label{bds-key}
\mu(E_{s_{t_1}}\cap E_{s_{t_2}}\cap \cdots \cap E_{s_{t_{k}}})\ge
a^k-\ep.
\end{align}
\end{prop}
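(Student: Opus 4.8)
\textbf{Proof proposal for Proposition~\ref{infinite-type}.}

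The plan is to reduce the statement to a combinatorial counting argument on the indicator functions $\mathbf{1}_{E_i}$, using a second-moment (averaging) estimate over all $k$-subsets of a given $n$-tuple $\{s_1<\cdots<s_n\}$. First I would fix $a>0$, $k\ge 1$, $\ep>0$, and set $f=\frac1n\sum_{j=1}^n \mathbf{1}_{E_{s_j}}$; since each $\mu(E_{s_j})\ge a$ we have $\int f\,d\mu\ge a$. The key observation is that $\int f^k\,d\mu\ge\big(\int f\,d\mu\big)^k\ge a^k$ by Jensen's (or H\"older's) inequality applied to the probability space $(X,\mathcal B,\mu)$. Expanding $f^k$ as an average over all ordered $k$-tuples $(j_1,\ldots,j_k)\in\{1,\ldots,n\}^k$ of the products $\mathbf{1}_{E_{s_{j_1}}}\cdots\mathbf{1}_{E_{s_{j_k}}}$, one gets that the \emph{average} of $\mu(E_{s_{j_1}}\cap\cdots\cap E_{s_{j_k}})$ over all such tuples is at least $a^k$; hence at least one tuple achieves a value $\ge a^k$.

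The remaining point is to pass from an arbitrary ordered tuple with possible repetitions to a strictly increasing one $t_1<\cdots<t_k$ while keeping the measure bounded below by $a^k-\ep$. Here I would separate the contribution of the ``degenerate'' tuples (those in which some index is repeated): their number is $n^k-n(n-1)\cdots(n-k+1)$, which is $O(n^{k-1})$, so their total weight in the average is at most $\binom{k}{2}/n$ (a crude bound suffices), which is $<\ep$ once $n\ge N(a,k,\ep)$ is chosen large enough — explicitly one may take $N$ so that $k^2/n<\ep$. Removing the degenerate tuples from the average only decreases the denominator and subtracts a quantity $<\ep$ from the numerator bound, so the average of $\mu(E_{s_{j_1}}\cap\cdots\cap E_{s_{j_k}})$ over the $n(n-1)\cdots(n-k+1)$ tuples with distinct indices is still $\ge a^k-\ep$; therefore some such tuple realizes this bound, and after reordering its (distinct) indices we obtain $1\le t_1<t_2<\cdots<t_k\le n$ with $\mu(E_{s_{t_1}}\cap\cdots\cap E_{s_{t_k}})\ge a^k-\ep$, as required.

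I expect the only mildly delicate step to be the bookkeeping that shows the degenerate tuples contribute less than $\ep$ to the average and that discarding them does not push the bound below $a^k-\ep$; everything else is a one-line application of Jensen's inequality plus expansion of $f^k$. Note that the constant $N=N(a,k,\ep)$ depends only on $a$, $k$, $\ep$ (through the inequality $k^2/n<\ep$, together with the requirement $n\ge k$), and not on the particular tuple $\{s_1<\cdots<s_n\}$ or on the sets $E_i$ themselves, which is exactly the uniformity claimed in the statement.
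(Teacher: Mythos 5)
Your argument is correct: with $f=\frac1n\sum_{j=1}^n\mathbf{1}_{E_{s_j}}$ the convexity bound $\int f^k\,d\mu\ge\bigl(\int f\,d\mu\bigr)^k\ge a^k$, the expansion of $f^k$ over ordered $k$-tuples, and the observation that the degenerate tuples carry weight at most $\binom{k}{2}/n<\ep$ in that average (after which dividing by the smaller count $n(n-1)\cdots(n-k+1)$ only helps, in the nontrivial case $a^k-\ep\ge 0$) give a valid $N=N(k,\ep)$ independent of the tuple and of the sets. The paper itself supplies no proof of Proposition \ref{infinite-type}, deferring to \cite{G} and \cite{HLY}, and the classical argument there is essentially this same second-moment counting, so your proposal matches the intended proof.
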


\medskip

\begin{rem}\label{remark}
To prove Proposition \ref{longproof}, one needs to use Proposition
\ref{infinite-type} repeatedly. To avoid explaining the same idea
frequently, we illustrate how we will use Proposition
\ref{infinite-type} in the proof of Proposition \ref{longproof}
first.

\medskip

Let $\{k_i^j\}_{i=1}^\infty$ be subsequences of $\Z$, $j\in\N$.
Assume $(X,\mathcal{B},\mu,T)$ is a measure preserving system and
$A\in \mathcal{B}$ with positive measure. Let $A_1=A, a_1=\mu(A_1)$.
We will show that there are $A_j\in \mathcal{B}$ and $t_1^j,t_2^j,
N_j$ such that $a_j=\mu(A_j)\ge \frac 12a_{j-1}^2>0$, and for $n\ge
N_j$ and any tuple $\{ s_1<s_2<\cdots <s_n\}$ there exist $1\le
t_1^j<t_2^j\le n$ with $\mu(T^{-k^j_{s_{t_1^j}}}A_j\cap
T^{-k^j_{s_{t_2^j}}}A_j)\ge \frac{1}{2}a_j^2$.

Let $E^1_i=T^{-k^1_i}A, i\in \N$. Let $A_1=A, a_1=\mu(A_1)$ and let
$N_1=N(a_1,2,\frac{1}{2}a_1^2)$ be as in Proposition
\ref{infinite-type}. Then for $n\ge N_1$ and any tuple $\{
s_1<s_2<\cdots <s_n\}$ there exist $1\le t_1^1<t_2^1\le n$ with
$\mu(E^1_{s_{t_1^1}}\cap E^1_{s_{t_2^1}})\ge \frac{1}{2}a_1^2.$

Once one fixes a tuple $\{ s_1<s_2<\cdots <s_n\}$, then one has a
fixed $k^1_{t^1_1}$ and $k^1_{t^1_2}$ with
$\mu(E^1_{k^1_{t_1^1}}\cap E^1_{k^1_{t_2^1}})\ge \frac{1}{2}a_1^2$.
Now let $A_2=A_1\cap T^{-k^1_{t_2^1}+k^1_{t_1^1} }A_1$,
$a_2=\mu(A_2)=\mu(E^1_{k_{t_1^1}}\cap E^1_{k_{t_2^1}})\ge \frac 12
a_1^2$. Let $E^2_i=T^{-k^2_i}A_2, i\in \N$. Let
$N_2=N(a_2,2,\frac{1}{2}a_2^2)$ be as in Proposition
\ref{infinite-type}.  Thus for $n\ge N_2$ and any tuple $\{
s_1<s_2<\cdots <s_n\}$ there exist $1\le t_1^2<t_2^2\le n$ with
$\mu(E^2_{s_{t_1^2}}\cap E^2_{s_{t_2^2}})\ge \frac{1}{2}a_2^2.$ Then
one fixes a tuple $\{ s_1<s_2<\cdots <s_n\}$ and goes on as above.

Inductively, assume that $\{E^j_i=T^{-k^{j}_i}A_{j}\}_{i=1}^\infty,
A_j, a_j, t_1^j,t_2^j, N_j$ are defined such that for $n\ge N_j$ and
any tuple $\{ s_1<s_2<\cdots <s_n\}$ there exist $1\le
t_1^j<t_2^j\le n$ with $\mu(E^2_{s_{t_1^j}}\cap E^2_{s_{t_2^j}})\ge
\frac{1}{2}a_j^2$. Fix a tuple $\{ s_1<s_2<\cdots <s_n\}$, then one
has a fixed $k^j_{t^1_1}$ and $k^j_{t^1_2}$ with
$\mu(E^j_{k^j_{t_1^j}}\cap E^j_{k^j_{t_2^j}})\ge \frac{1}{2}a_j^2$.

Let $A_{j+1}=A_j\cap T^{-k^j_{t_2^1}+k^j_{t_1^1} }A_j$ and
$a_{j+1}=\mu(A_{j+1})=\mu(E^j_{k^j_{t_1^j}}\cap
E^j_{k^j_{t_2^j}})\ge \frac{1}{2}a_j^2$. Let
$E^{j+1}_i=T^{-k^{j+1}_i}A_{j+1}$, $i\in \N$, and let
$N_{j+1}=N(a_{j+1},2,\frac{1}{2}a_{j+1}^2)$ be as in Proposition
\ref{infinite-type}. Then for $n\ge N_{j+1}$ and any tuple $\{
s_1<s_2<\cdots <s_n\}$ there exist $1\le t_1^{j+1}<t_2^{j+1}\le n$
with $\mu(E^{j+1}_{s_{t_1^{j+1}}}\cap E^{j+1}_{s_{t_2^{j+1}}})\ge
\frac{1}{2}a_{j+1}^2.$

\medskip

Note that the choices of $\{N_i\}$ is independent of
$\{k_i^j\}_{i=1}^\infty$. \hfill $\square$
\end{rem}

Now we are ready to show

\begin{prop}\label{longproof}
The following statements hold.
\begin{enumerate}
\item For each $d\in\N$, $\F_{fip}\subset\F_{P_d},$ which implies that
$\F_{P_\infty}=\F_{fip}.$

\item $\F_{SG_d}\subset \F_{P_d}$ for each $d\in\N\cup\{\infty\}$.
Moreover one has $\F_{fSG_d}\subset \F_{P_d}$.

\end{enumerate}
\end{prop}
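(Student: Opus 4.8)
\textbf{Proof proposal for Proposition \ref{longproof}.}

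The plan is to prove the two statements in tandem, since both ultimately reduce to the same counting mechanism built on Proposition \ref{infinite-type}, as laid out in Remark \ref{remark}. For part (1) it suffices to show $\F_{fip}\subset \F_{P_d}$ for every $d\in\N$, because $\F_{P_\infty}=\bigcap_d\F_{P_d}$ and the reverse inclusion $\F_{P_\infty}\subset\F_{fip}$ is immediate (a Poincar\'e sequence of order $d$ must contain $FS(\{n_i\}_{i=1}^d)$ for arbitrarily large parameter sets, so intersecting over all $d$ forces an infinite IP-configuration, hence a finite-IP set of every length). So fix $d$ and fix $A=\bigcup_{i}FS(P^i)\in\F_{fip}$ with $|P^i|\to\infty$; given a measure preserving system $(X,\B,\mu,T)$ and $B\in\B$ with $\mu(B)>0$, I must produce $i$ and $n_1,\dots,n_d\in P^i$ with $FS(\{n_j\}_{j=1}^d)\subset P^i\subset A$ and $\mu\big(B\cap\bigcap_{n\in FS(\{n_j\})}T^{-n}B\big)>0$.

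The core of the argument is the iterated application of Proposition \ref{infinite-type} exactly as described in Remark \ref{remark}: starting from $A_1=B$, $a_1=\mu(B)$, at stage $j$ one sets $E^j_i=T^{-k^j_i}A_j$ for an appropriate sequence $\{k^j_i\}$ extracted from $P^i$, invokes Proposition \ref{infinite-type} with $k=2$ and $\ep=\tfrac12 a_j^2$ to find two indices $t_1^j<t_2^j$ with $\mu(E^j_{t_1^j}\cap E^j_{t_2^j})\ge\tfrac12 a_j^2$, and then sets $A_{j+1}=A_j\cap T^{-(k^j_{t_2^j}-k^j_{t_1^j})}A_j$, so $a_{j+1}=\mu(A_{j+1})\ge\tfrac12 a_j^2>0$. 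Crucially the thresholds $N_j=N(a_j,2,\tfrac12 a_j^2)$ depend only on $a_1,\dots,a_j$ and not on the sequences $\{k^j_i\}$, so one can first compute $N_1,\dots,N_d$, then choose $i$ large enough that $|P^i|$ exceeds the total count needed to run $d$ nested extractions (a number like $N_1 N_2\cdots N_d$, or more carefully a bound obtained by running the extractions top-down), and only then peel off the $n_j$'s from $P^i$. After $d$ stages one has $\mu(A_{d+1})>0$, and unwinding the definitions $A_{d+1}=B\cap\bigcap_{n\in FS(\{m_1,\dots,m_d\})}T^{-n}B$ where $m_j=k^j_{t_2^j}-k^j_{t_1^j}$, and by arranging the $\{k^j_i\}$ as partial sums within $P^i$ one gets each $m_j$ and all their sums lying in $FS(P^i)\subset A$; taking $n_j=m_j$ finishes part (1).

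For part (2), note $\F_{fSG_d}\subset\F_{P_d}$ implies $\F_{SG_d}\subset\F_{P_d}$ since an infinite $SG_d(P)$ contains $SG_d$ of arbitrarily long finite initial segments, i.e.\ membership of an $SG_d$-set already forces the $\F_{fSG_d}$-configuration. So suppose $A\supset\bigcup_i SG_d(P^i)$ with $|P^i|\to\infty$. The only change from part (1) is that the target configuration inside $P^i$ is an $SG_d$-pattern rather than a full $FS$-pattern: I must choose the indices $t_1^j,t_2^j$ and hence the differences $m_j$ so that the resulting sums-with-gaps-$<d$ combination lands in $SG_d(P^i)$. This is arranged exactly as in the proof of Theorem \ref{huang12}: one organizes the extraction stages so the blocks of consecutive elements of $P^i$ contributing to successive $m_j$ overlap with gap less than $d$, which is compatible with the $d$-fold nesting. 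The bookkeeping is identical in spirit to the construction of the sequence $P=\{P_k\}$ there. The main obstacle — and the only genuinely delicate point — is this combinatorial alignment: making the $d$ nested applications of Proposition \ref{infinite-type} produce differences that assemble into a legitimate $SG_d$ (resp.\ $FS$) block inside a single $P^i$, while keeping the index budget $|P^i|$ under control via the system-independent constants $N_j$. Once that alignment is set up, positivity of $\mu(A_{d+1})$ is automatic from the chain of inequalities $a_{j+1}\ge\tfrac12 a_j^2$.
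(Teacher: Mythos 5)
Your part (1) is correct and is essentially the paper's argument: iterate Proposition \ref{infinite-type} as in Remark \ref{remark} with $k=2$ and $\ep=\tfrac12 a_j^2$, use the fact that the thresholds $N_j$ depend only on $a_1=\mu(A)$ and not on the sequences fed in, pre-compute $N_1,\dots,N_d$, and carve the $q_j$'s out of $d$ disjoint consecutive blocks of the $p_i$'s so that $FS(\{q_j\}_{j=1}^d)\subset FS(\{p_i\})\subset F$. (The paper takes $\ell_d=d\sum_i N_i$ terms; your ``sum versus product'' hedge is harmless here since the blocks are disjoint and consecutive.) Your reduction of $\F_{SG_d}\subset\F_{P_d}$ to $\F_{fSG_d}\subset\F_{P_d}$ via initial segments is also valid, and is a slightly cleaner organization than the paper's, which proves the infinite case directly and leaves the finite case implicit.

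However, part (2) has a genuine gap: you name the combinatorial alignment as ``the main obstacle and the only genuinely delicate point'' and then do not carry it out, deferring to ``exactly as in the proof of Theorem \ref{huang12}.'' That reference does not do the work for you --- there one \emph{builds} a sequence $P$ whose $SG_d$-set sits inside $N(x,U)$, whereas here one must \emph{select from a given} $P$ nested blocks compatible with the density-increment scheme. The paper's proof of (2) is almost entirely this construction: one sets $M_i=\prod_{j=i}^d N_j$, reserves the indices congruent to $k$ modulo $d$ for stage $k$, groups them into $N_k$ consecutive blocks of length $M_{k+1}$ each, applies Remark \ref{remark} to the partial sums $q_1^k,q_1^k+q_2^k,\dots$ of those blocks, and --- crucially --- nests the stage-$(k+1)$ blocks inside the interval of indices selected at stage $k$ (this is why products of the $N_j$, not sums, are needed). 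Only then can one verify that each $n_k$ is a sum $P_{dm_k+k}+P_{dm_k+d+k}+\cdots$ and that every subset sum of $\{n_1,\dots,n_d\}$ uses an index set whose consecutive gaps have length less than $d$, i.e.\ lies in $SG_d(P)$. Without specifying this residue-class bookkeeping and the nesting, the claim that the extracted differences ``assemble into a legitimate $SG_d$ block'' is an assertion, not a proof; note also that a naive choice of blocks would produce subset sums that skip $d$ or more consecutive indices and hence leave $SG_d(P)$.
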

\begin{proof}
(1) Let $F\in \F_{fip}$. Fix $d\in \N$. Now we  show $F\in
\F_{P_d}$. For this purpose, assume that $(X,\mathcal{B},\mu,T)$ is
a measure preserving system and $A\in \mathcal{B}$ with positive
measure.  Since  $F\in \F_{fip}$, there are $p_1,p_2,\cdots p_{\ell_
d}\in \mathbb{Z}$  with $\ell_d=d\sum_{i=1}^d N_i$ such that
$F\supset FS\{p_i\}_{i=1}^{\ell_d}$, where $N_i$ are chosen as in
Remark \ref{remark} for $(X,\mathcal{B},\mu,T)$ and $A$.

Let $A_1=A$. For $p_1,p_1+p_2,\cdots, p_1+\cdots+p_{N_1}$ by the
argument in Remark \ref{remark} there is
$q_1=p_{i_1^1}+\cdots+p_{i_2^1}$ such that $\mu(A_1\cap
T^{-q_1}A_1)\ge\frac{1}{2}a_1^2,$ where $a_1=\mu(A_1)$ and $1\le
i_1^1<i_2^1\le N_1.$ Let $A_2=A_1\cap T^{-q_1}A_1$ and
$a_2=\mu(A_2)$. For $p_{N_1+1},p_{N_1+1}+p_{N_1+2}, \cdots,
p_{N_1+1}+\cdots+p_{N_1+N_2}$, there is
$q_2=p_{i_1^2}+\cdots+p_{i_2^2}$ such that $\mu(A_2\cap
T^{-q_2}A_2)\ge\frac{1}{2}a_2^2,$ where $N_1+1\le i_1^2<i_2^2\le
N_1+N_2.$ Note that $q_1,q_2, q_1+q_2\in F$.

Inductively we obtain $$N_1+\ldots+N_j+1\le i_1^{j+1}<i_2^{j+1}\le
N_1+\ldots+N_{j+1},\ 0\le j\le d-1.$$ $q_1,\ldots, q_d$ and $A_1,
\ldots, A_q$ with $q_j=\sum_{i=i_1^j}^{i_2^j}p_i$ and
$A_j=A_{j-1}\cap T^{-q_{j-1}}A_{j-1}$, $a_j=\mu(A_j)$ such that
$\mu(A_j\cap T^{-q_j}A_j)\ge \frac{1}{2}a_j^2$. Thus
$$\mu(A\cap \bigcap_{n\in FS(\{q_i\}_{i=1}^d)}T^{-n}A)\ge\frac 12 a_d^2>0,$$
and it is clear that $F\supset FS(\{q_i\}_{i=1}^d)$. This implies
that $F\in \F_{P_d}$.

Thus  $\F_{P_\infty}\supset\F_{fip}.$ Since it is clear that
$\F_{P_\infty}\subset\F_{fip},$ we are done.


\medskip
(2) Since each $SG_1$-set is a $\Delta$-set, and hence it is a
Poincar\'e sequence (this is easy to be checked by Poincar\'e
recurrence Theorem \cite{F81}). We First show the case when $d=2$
which will illustrate the general idea. Then we give the proof for
the general case.

Let $F\in SG_2$. Then there is $P=\{P_i\}_{i=1}^\infty\subset \Z$
with $F=SG_2(P)$. Let $(X, \mathcal{B},\mu,T)$ be a m.d.s. and
$A\in\mathcal{B}$ with $\mu(A)>0$. Set $A_1=A$ and $a_1=\mu(A_1)$.

Let $$q_1=\sum_{i=1}^{N_2}P_{2i-1},
q_2=\sum_{i=N_2+1}^{2N_2}P_{2i-1},\ \ \ldots,\ \text{and}\
q_{N_1}=\sum_{i=(N_1-1)N_2+1}^{N_1N_2}P_{2i-1},$$ where
$N_1=N(a_1,2,\frac{1}{2}a_1^2)$ and $N_2=N(a_2,2,\frac{1}{2}a_2^2)$
are chosen as in Remark \ref{remark} for $(X,\mathcal{B},\mu,T)$ and
$A$. Consider the sequence $q_1,q_1+q_2, \ldots, q_1+q_2+\ldots
+q_{N_1}$. Then as in Remark \ref{remark} there are $1\le i_1,j_1\le
N_1$ such that $\mu(A_2)\ge \frac{1}{2}\mu(A)^2$, where $A_2=A_1\cap
T^{-n_1}A_1$ and $n_1=\sum_{i=i_1}^{j_1}q_i $. Note that
$$n_1=P_{2(i_1-1)N_2+1}+P_{2(i_1-1)N_2+3}+ \ldots + P_{2j_1N_2-1}.$$

Now consider the sequence
$$P_{2(i_1-1)N_2}, P_{2(i_1-1)N_2}+P_{2(i_1-1)N_2+2},
\ldots,P_{2(i_1-1)N_2}+P_{2(i_1-1)N_2+2}+\ldots+P_{2j_1N_2}.$$ It
has $N_2$ terms. So as in Remark \ref{remark} there are $1\le
i_2,j_2\le N_2$ such that $\mu(A_2\cap T^{-n_2}A_2)>0$, where
$n_2=\sum_{i=(i_1-1)N_2+i_2}^{(i_1-1)N_2+j_2} P_{2i} $. Note that
$n_1,n_2,n_1+n_2\in F$ by the definition of $SG_2(P)$. It is easy to
verify that
$$\mu(A\cap T^{-n_1}A\cap T^{-n_2}A\cap T^{-n_1-n_2}A)\ge \frac 12
\mu(A_2)^2>0.$$ Hence $F\in \F_{P_2}$.

\medskip

Now we show the general case. Assume that $d\ge 3$ and let $F\in
SG_d$. We show that $F\in \F_{P_d}$.

Since $F\in SG_d$, there is $P=\{P_i\}_{i=1}^\infty\subset \Z$ with
$F=SG_d(P)$. Let $(X, \mathcal{B},\mu,T)$ be a measure preserving
system and $A\in\mathcal{B}$ with $\mu(A)>0$. Set $A_1=A$. Let
$N_1,\ldots,N_d$ be the numbers as defined in Remark \ref{remark}
for $(X,\mathcal{B},\mu,T)$, $A$ and let $M_i=\prod_{j=i}^d N_j$ for
$1\le i\le d$.

Let $$q_1^1=\sum_{i=1}^{M_2}P_{di-(d-1)},\
q_2^1=\sum_{i=M_2+1}^{2M_2}P_{di-(d-1)},\ \ldots,\
q_{N_1}^1=\sum_{i=(N_1-1)M_2+1}^{M_1}P_{di-(d-1)}.$$ Consider the
sequence $q_1^1,q_1^1+q_2^1, \ldots, q_1^1+q_2^1+\ldots +q_{N_1}^1$.
Then as in Remark \ref{remark} there are $1\le i_1,j_1\le N_1$ such
that $\mu(A_2)\ge \frac{1}{2}\mu(A_1)^2$, where $A_2=A_1\cap
T^{-n_1}A_1$ and $n_1=\sum_{i=i_1}^{j_1}q_i^1$.

Let $m_1=(i_1-1)M_2$. Note that there is $t_1\ge M_2-1$ such that
$$n_1=\sum_{i=i_1}^{j_1}q_i^1=P_{dm_1+1}+P_{dm_1+d+1} +\ldots +
P_{dm_1+t_1d+1}.$$

Now consider $$q_1^2=\sum_{i=m_1+1}^{m_1+M_3}P_{di-(d-2)},\
q_2^2=\sum_{i=m_1+M_3+1}^{m_1+2M_3}P_{di-(d-2)},\ \ldots,\
q_{N_2}^2=\sum_{i=m_1+(N_2-1)M_3+1}^{m_1+M_2}P_{di-(d-2)}.$$

Now consider $q_1^2,q_1^2+q_2^2, \ldots, q_1^2+q_2^2+\ldots
+q_{N_2}^2$. It has $N_2$ terms. So as in Remark \ref{remark} there
are $1\le i_2,j_2\le N_2$ such that $\mu(A_3)\ge
\frac{1}{2}\mu(A_2)^2$, where $A_3=A_2\cap T^{-n_2}A_2$ and
$n_2=\sum_{i=i_2}^{j_2} q_i^2$. Let $m_2=m_1+(i_2-1)M_3$. Note that
$n_1,n_2,n_1+n_2\in F$ and there is $t_2\ge M_3-1$ such that
$$n_2=\sum_{i=i_2}^{j_2}q_i^2=P_{dm_2+2}+P_{dm_2+d+2} +\ldots +
P_{dm_2+t_2d+2}.$$ Note that $n_2$ has at least $M_3$ terms.

Inductively for $1\le k\le d-1$ we have $1\le i_k,j_k\le N_k$ and
$$n_k=\sum_{i=i_k}^{j_k}q_i^k=P_{dm_k+k}+P_{dm_k+d+k} +\ldots +
P_{dm_k+t_kd+k},$$ where $t_k\ge M_{k+1}-1$. Also we have
$A_k=A_{k-1}\cap T^{-n_{k-1}}A_{k-1}$ with $\mu(A_k)\ge \frac 12
\mu(A_{k-1})^2$, and $FS(\{n_j\}_{j=1}^{k})\subset F$.

Especially, when $k=d$, we get $1\le i_d<j_d\le N_d$ and
$n_d=\sum_{i=i_d}^{j_d}P_{di}$. By the definition of $SG_d$ we get
that $FS(\{n_i\}_{i=1}^d)\subset F$. From the definition of $A_j,
j=1,2,\ldots, d$, one has
$$\mu(A\cap \bigcap_{n\in FS(\{n_i\}_{i=1}^d)}T^{-n}A)\ge\frac 12 \mu(A_d)^2>0,$$
which implies that $F\in \F_{P_d}$. The proof is completed.
\end{proof}

\subsubsection{}

\noindent{\it {Proof of Proposition \ref{birkhoff}:}} It is clear
that $\F_{B_\infty}\subset\F_{fip}.$ Since $\F_{fip}\subset
\F_{P_\infty}\subset\F_{B_\infty}$ (by Proposition \ref{longproof}
and the obvious fact that $\F_{P_d}\subset \F_{B_d}$) we have
$\F_{B_\infty}=\F_{fip}.$

\subsection{Poincar\'e recurrence sets and $\RP^{[d]}$}

\begin{thm}\label{poincare} Let $(X,T)$ be a minimal t.d.s.. Then
for each $d\in\N\cup \{\infty\}$, $(x,y)\in \RP^{[d]}$ if and only
if $N(x,U)\in \F_{P_d}$ for any neighborhood $U$ of $y$.
\end{thm}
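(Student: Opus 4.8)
The plan is to mirror the strategy already used for Theorem \ref{birkhoff}, replacing topological recurrence arguments by measure-theoretic ones. Recall $\F_{P_d}\subset\F_{B_d}$ trivially, so one direction will follow for free once I know how the other one goes, but it is cleaner to argue both directions directly from Theorem \ref{ShaoYe}. For the easy implication ($N(x,U)\in\F_{P_d}$ for every neighborhood $U$ of $y$ implies $(x,y)\in\RP^{[d]}$), I would unwind the definition: membership in $\F_{P_d}$ forces the existence of $n_1,\dots,n_d$ with $FS(\{n_i\}_{i=1}^d)\subset N(x,U)$, so $T^nx\in U$ for every $n\in FS(\{n_i\}_{i=1}^d)$; applying a measure-preserving system is not even needed here since $\F_{P_d}$ is generated by Poincar\'e sequences of order $d$ and any such set, tested against the point mass dynamics or simply by its defining combinatorial shape, contains an $FS$-set of length $d$. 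Then $(x,y^{[d+1]}_*)\in\overline{\F^{[d+1]}}(x^{[d+1]})$ by Theorem \ref{ShaoYe}(1), which is precisely $(x,y)\in\RP^{[d]}$. When $d=\infty$ this is just the intersection over all finite $d$.

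For the hard direction, suppose $(x,y)\in\RP^{[d]}$ and fix a neighborhood $U$ of $y$; I must show $N(x,U)\in\F_{P_d}$, i.e.\ that $N(x,U)$ contains a Poincar\'e sequence of order $d$. The key is to combine the syndeticity statement of Remark \ref{huangrem} (or rather its mechanism) with the machinery of Propositions \ref{infinite-type}, \ref{longproof} and Remark \ref{remark}. Concretely, given a measure preserving system $(Z,\mathcal B,\mu,R)$ and $B\in\mathcal B$ with $\mu(B)>0$, I would first use Remark \ref{remark} to produce the integers $N_1,\dots,N_d$ depending only on $\mu(B)$; then I need to locate inside $N(x,U)$ a long finite $IP$-set $FS(\{p_i\}_{i=1}^{\ell_d})$ with $\ell_d=d\sum_{i=1}^d N_i$, at which point the argument of Proposition \ref{longproof}(1) applies verbatim to extract $q_1,\dots,q_d$ with $FS(\{q_i\}_{i=1}^d)\subset N(x,U)$ and $\mu\big(B\cap\bigcap_{n\in FS(\{q_i\}_{i=1}^d)}R^{-n}B\big)>0$. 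So everything reduces to: for $(x,y)\in\RP^{[d]}$, the set $N(x,U)$ contains arbitrarily long finite $IP$-sets, indeed it lies in $\F_{fip}$. But this follows from Theorem \ref{ShaoYe}(1)--(2) exactly as in the proof of Theorem \ref{huang12} (the $SG_d$ case contains the $FS$-set case), or even more directly: lifting to a minimal subsystem of $X\times Z^Z$ as in Theorem \ref{huang10}, one gets that $N(x,U)\cap F$ is syndetic for every $F\in\F_{d,0}$, and in particular $N(x,U)$ is an $IP^*_{d}$-flavored set large enough to host the required $FS$-blocks.

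The main obstacle I anticipate is bookkeeping: one must run the nested selection of Remark \ref{remark} while simultaneously keeping the chosen blocks $q_1,\dots,q_d$ inside $N(x,U)$, and the only leverage for the latter is that $N(x,U)$ contains one single long $FS$-set whose sub-$FS$-sets are automatically in $N(x,U)$. So the crux is to first prove cleanly that $(x,y)\in\RP^{[d]}$ implies $N(x,U)\in\F_{fip}$ with the quantitative strengthening that it contains $FS$-sets of every finite length (this is where Theorem \ref{ShaoYe}, via the minimality of $\overline{\F^{[d+1]}}(x^{[d+1]})$ and the lifting trick of Theorem \ref{huang10}, does the real work), and only then feed that into the Proposition \ref{longproof} scheme. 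For $d=\infty$ I would simply note $\F_{P_\infty}=\F_{fip}$ by Proposition \ref{longproof}(1) and that $(x,y)\in\RP^{[\infty]}$ means $(x,y)\in\RP^{[d]}$ for all $d$, so $N(x,U)\in\bigcap_d\F_{P_d}=\F_{P_\infty}$; conversely $N(x,U)\in\F_{P_\infty}\subset\F_{P_d}$ gives $(x,y)\in\RP^{[d]}$ for each $d$, hence $(x,y)\in\RP^{[\infty]}$.
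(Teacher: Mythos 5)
Both directions of your proposal contain genuine gaps, and in both cases the gap is the same off-by-one/quantitative issue about how much IP-structure $\RP^{[d]}$ actually gives or needs.

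For the ``easy'' direction you extract from $N(x,U)\in\F_{P_d}$ only the combinatorial fact that $N(x,U)$ contains some $FS(\{n_i\}_{i=1}^d)$, and you explicitly assert that ``applying a measure preserving system is not even needed.'' That is exactly where the argument fails: by Theorem \ref{shaoye}(1), the condition that $N(x,U)$ contains a finite IP-set of length $d+1$ (not $d$) for every neighborhood $U$ of $y$ is what characterizes $(x,y)\in\RP^{[d]}$; an $FS$-set of length $d$ only yields $(x,y)\in\RP^{[d-1]}$ (for $d=1$ it yields nothing at all, since it only says $N(x,U)\neq\emptyset$). The paper's proof makes essential use of a measure: it takes $\mu\in M(X,T)$ (full support by minimality), applies the Poincar\'e-of-order-$d$ property of $N(x,U_2)$ to the system $(X,\mu,T)$ with $A=U_2$ a small ball around $y$, and thereby gets a \emph{single} $FS(\{n_i\}_{i=1}^d)$ that simultaneously lies in $N(x,U_2)$ and is a return-time set for some $y'\in U_2$. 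With $x'=x$ and this $y'$ the definition of $\RP^{[d]}$ is verified directly; no appeal to Theorem \ref{ShaoYe}(1) and no $FS$-set of length $d+1$ is needed. Your version discards precisely the companion point $y'$ that makes the count come out right.

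For the ``hard'' direction your reduction is to the claim that $(x,y)\in\RP^{[d]}$ forces $N(x,U)\in\F_{fip}$, i.e.\ that $N(x,U)$ contains arbitrarily long finite IP-sets, after which you would invoke Proposition \ref{longproof}(1). But by Theorem \ref{shaoye}, containing arbitrarily long finite IP-sets in every $N(x,U)$ characterizes $(x,y)\in\RP^{[\infty]}$, which is strictly stronger than $\RP^{[d]}$: if $(x,y)\in\RP^{[d]}\setminus\RP^{[d+1]}$ there is a neighborhood $U$ of $y$ for which $N(x,U)$ contains no $FS$-set of length $d+2$, so it certainly cannot host the $FS$-set of length $\ell_d=d\sum_{i=1}^dN_i$ your scheme requires. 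Nor does the $SG_d$ case ``contain the $FS$-set case'': $SG_d(P)$ for infinite $P$ contains $FS$-sets of length $d+1$ but in general no longer ones (the gap condition forbids skipping $d$ consecutive terms). The paper's route is different in substance: it quotes Theorem \ref{huang12} to get $N(x,U)\in\F_{SG_d}$ and then Proposition \ref{longproof}(2), $\F_{SG_d}\subset\F_{P_d}$, whose proof runs the selection scheme of Remark \ref{remark} along the $d$ residue classes of indices of the infinite sequence $P$, using the gap-$<d$ structure of $SG_d(P)$ rather than long $FS$-blocks. Reducing part (2) of Proposition \ref{longproof} to part (1), as you propose, is not possible.
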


\begin{proof} We first show the case when $d\in\N$.
$(\Leftarrow)$ Since $\F_{P_d}\subset \F_{B_d}$, it follows from
Theorem \ref{birkhoff}. Or one proves it directly as follows. Assume
$N(x,U)\in \F_{P_d}$ for any neighborhood $U$ of $y$ and $\mu\in
M(X,T)$. Then $\text{supp}(\mu)=X$ since $(X,T)$ is minimal. For any
$\epsilon>0$, let $U_1=B(x,\epsilon)$ and
$U_2=B(y,\frac{\epsilon}{2})$. Since $N(x,U_2)$ is a Poincar\'e
sequence of order $d$ and $\mu(U_2)>0$ there exist
$n_1,\ldots,n_d\in \mathbb{Z}$ such that $T^n x\in U_2$ for $n\in
FS(\{n_i\}_{i=1}^d)$ and $\mu(U_2\cap \bigcap_{n\in
FS(\{n_i\}_{i=1}^d)}T^{-n}U_2)>0$. Then any $y'\in U_2\cap
\bigcap_{n\in FS(\{n_i\}_{i=1}^d)}T^{-n}U_2$ satisfies $T^n y'\in
U_2$ for any $n\in FS(\{n_i\}_{i=1}^d)$. Thus, $\rho(y,y')<\epsilon$
and $\rho(T^mx,T^my')\le \text{diam}(U_2)<\epsilon$ for any $n\in
FS(\{n_i\}_{i=1}^d)$, which imply that $(x,y)\in \RP^{[d]}$.

\medskip
$(\Rightarrow)$ Assume that $(x,y)\in \RP^{[d]}$ and $U$ is a
neighborhood of $y$. By Theorem \ref{huang12}, $N(x,U)\in
\F_{SG_d}$. Then by Proposition \ref{longproof} we have
$N(x,U)\in\F_{P_d}.$

\medskip

The case $d=\infty$ follows from the case $d\in \N$ and definitions.
\end{proof}

\subsection{Conclusion}

Now we sum up the results of this section and previous two sections.
Note that $\F_{Bir_\infty}$ and $\F_{Poi_\infty}$ can be defined
naturally. Since $\F_{1,0}\subset \F_{2,0}\subset \ldots$ we define
$\F_{\infty,0}=:\bigcup_{d=1}^\infty \F_{d,0}$. Another way to do
this is that one follows the idea in \cite{D-Y} to define
$\infty$-step nilsystems and view $\F_{\infty,0}$ as the family
generated by all Nil$_\infty$ Bohr$_0$-sets. It is easy to check
that Theorem \ref{several} holds for $d=\infty$.

Thus we have
\begin{thm}\label{rpd}
Let $(X,T)$ be a minimal t.d.s. and $x,y\in X$. Then the following
statements are equivalent for $d\in\N\cup\{\infty\}$:

\begin{enumerate}

\item $(x,y)\in \RP^{[d]}$.

\item $N(x,U)\in \F_{d,0}^*$ for each
neighborhood $U$ of $y$.

\item $N(x,U)\in \F_{Poi_d}$ for each
neighborhood $U$ of $y$.

\item $N(x,U)\in \F_{Bir_d}$ for each neighborhood $U$ of $y$.

\item $N(x,U)\in \F_{SG_d}$ for each neighborhood $U$ of $y$.

\item $N(x,U)\in \F_{fSG_d}$ for each neighborhood $U$ of $y$.

\item $N(x,U)\in \F_{B_d}$ for each
neighborhood $U$ of $y$.

\item $N(x,U)\in \F_{P_d}$ for each neighborhood
$U$ of $y$.

\end{enumerate}

\end{thm}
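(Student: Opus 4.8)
The plan is to assemble Theorem \ref{rpd} from the characterizations already proved in the preceding sections, together with the chain of inclusions among the families. The backbone is the observation that all eight conditions are equivalent to $(x,y)\in\RP^{[d]}$, so it suffices to arrange them into a cycle of implications in which every arrow is either a previously established theorem or an immediate consequence of a family inclusion $\F\subset\G$ (which gives ``$N(x,U)\in\F$ for all $U$'' $\Rightarrow$ ``$N(x,U)\in\G$ for all $U$'' for free). First I would record the inclusions we have available: $\F_{Poi_d}\subset\F_{Bir_d}\subset\F_{d,0}^*$ (Theorem \ref{HSY2011}), $\F_{fSG_d}\subset\F_{SG_d}$ and $\F_{fSG_d}\subset\F_{P_d}$ (Proposition \ref{longproof}(2)), $\F_{SG_d}\subset\F_{P_d}$ (Proposition \ref{longproof}(2) again), and $\F_{P_d}\subset\F_{B_d}$ (obvious from the definitions).

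Then I would lay out the cycle. Start from (1): $(x,y)\in\RP^{[d]}$. By Corollary \ref{cor-3.12} this gives $N(x,U)\in\F_{Poi_d}$, i.e. (3); by Theorem \ref{HSY2011}, (3)$\Rightarrow$(4)$\Rightarrow$(2). By Theorem \ref{huang10}, (2)$\Rightarrow$(1), so (1)--(4) are already equivalent (this is essentially Theorem \ref{several} extended to $d=\infty$, for which one notes $\F_{\infty,0}=\bigcup_d\F_{d,0}$ and the proofs go through verbatim). Next, the $SG$ side: by Theorem \ref{huang12}, (1)$\Leftrightarrow$(5). For (6), one direction is the inclusion $\F_{fSG_d}\subset\F_{SG_d}$ giving (6)$\Rightarrow$(5); for the converse (5)$\Rightarrow$(6) I would revisit the proof of Theorem \ref{huang12}, noting that the sets $SG_d(P)$ constructed there with $P=\{P_k\}$ of unbounded length already yield, by truncation, arbitrarily long finite $SG_d$ blocks inside $N(x,U)$ — alternatively, since the proof of Theorem \ref{poincare} shows $(x,y)\in\RP^{[d]}$ forces $N(x,U)\in\F_{P_d}$, and one checks directly that the argument in fact produces finite $SG_d$ configurations, (6) follows. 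Finally the Poincar\'e/Birkhoff-of-order-$d$ side: Theorem \ref{poincare} gives (1)$\Leftrightarrow$(8), Theorem \ref{birkhoff} gives (1)$\Leftrightarrow$(7), and the inclusion $\F_{P_d}\subset\F_{B_d}$ is consistent with (8)$\Rightarrow$(7). Collecting: (1)$\Leftrightarrow$(2)$\Leftrightarrow$(3)$\Leftrightarrow$(4) from Section 3, (1)$\Leftrightarrow$(5) and (1)$\Leftrightarrow$(6) from Section 4 plus the finite-version remark, and (1)$\Leftrightarrow$(7)$\Leftrightarrow$(8) from Section 5.

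The only genuinely new content — everything else is bookkeeping — is the $d=\infty$ case and the finite-$SG_d$ statement (6). For $d=\infty$ I would simply remark that each of Theorems \ref{several}, \ref{huang12}, \ref{birkhoff}, \ref{poincare} was already stated for $d\in\N\cup\{\infty\}$ or trivially extends, since $\RP^{[\infty]}=\bigcap_d\RP^{[d]}$ and all the families at level $\infty$ are the corresponding intersections (or, for $\F_{\infty,0}$, the union $\bigcup_d\F_{d,0}$, whose dual is $\bigcap_d\F_{d,0}^*$), so ``for all $d$'' statements pass to the limit. The main obstacle I anticipate is making the (5)$\Leftrightarrow$(6) equivalence airtight: one must check that the dual relationship $\F_{fSG_d}\subset\F_{SG_d}$ does not by itself give (5)$\Rightarrow$(6), and that the real input is that the construction behind Theorem \ref{huang12} produces not just one infinite $SG_d(P)$ but $SG_d$-configurations on finite index sets of every length — this is visible from the inductive construction of the $P_k$'s there, since any finite initial segment of $P$ already has $N(x,U)\supset SG_d$ of that segment. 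I would phrase this as a one-line remark rather than a full separate argument.

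\begin{proof}
By Corollary \ref{cor-3.12}, (1)$\Rightarrow$(3); by Theorem \ref{HSY2011}, (3)$\Rightarrow$(4)$\Rightarrow$(2); by Theorem \ref{huang10}, (2)$\Rightarrow$(1). Thus (1)--(4) are equivalent (this is Theorem \ref{several}, which together with the conventions $\F_{\infty,0}=\bigcup_{d}\F_{d,0}$, $\F_{Poi_\infty}=\bigcap_d\F_{Poi_d}$, $\F_{Bir_\infty}=\bigcap_d\F_{Bir_d}$ also holds for $d=\infty$, the proofs being unchanged). By Theorem \ref{huang12}, (1)$\Leftrightarrow$(5). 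Since $\F_{fSG_d}\subset\F_{SG_d}$, (6)$\Rightarrow$(5); conversely, the inductive construction in the proof of Theorem \ref{huang12} shows that when $(x,y)\in\RP^{[d]}$, for each neighborhood $U$ of $y$ the set $N(x,U)$ contains $SG_d(P)$ for an infinite sequence $P=\{P_k\}_k$, hence contains $SG_d$ of each finite initial segment $\{P_1,\ldots,P_m\}$, and these have unbounded length, so $N(x,U)\in\F_{fSG_d}$; thus (1)$\Rightarrow$(6). By Theorem \ref{poincare}, (1)$\Leftrightarrow$(8), and by Theorem \ref{birkhoff}, (1)$\Leftrightarrow$(7). (The inclusions $\F_{fSG_d}\subset\F_{SG_d}\subset\F_{P_d}\subset\F_{B_d}$ from Proposition \ref{longproof} and the definitions are of course consistent with (6)$\Rightarrow$(5), (5)$\Rightarrow$(8), (8)$\Rightarrow$(7).) For $d=\infty$ all the cited theorems were stated for $d\in\N\cup\{\infty\}$, so the equivalences persist. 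Collecting these implications yields the equivalence of (1)--(8).
\end{proof}
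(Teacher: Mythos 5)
Your overall strategy is exactly the paper's: Theorem \ref{rpd} is assembled from Theorem \ref{several}, Theorem \ref{huang12}, Theorem \ref{birkhoff} and Theorem \ref{poincare}, with the $d=\infty$ case handled by the conventions $\F_{\infty,0}=\bigcup_d\F_{d,0}$ etc. Items (1)--(5), (7), (8) are handled correctly. But there is one concrete error: you assert the inclusion $\F_{fSG_d}\subset\F_{SG_d}$ and use it for the implication (6)$\Rightarrow$(5). The inclusion goes the other way. If $A\supset SG_d(P)$ for an \emph{infinite} $P$, then $A$ contains $SG_d$ of every finite initial segment of $P$, and these have unbounded length, so $A\in\F_{fSG_d}$; hence $\F_{SG_d}\subset\F_{fSG_d}$. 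The reverse inclusion fails in general (a union $\bigcup_i SG_d(P^i)$ over widely separated finite blocks $P^i$ need not contain $SG_d(Q)$ for any infinite $Q$), and Proposition \ref{longproof}(2) claims only $\F_{SG_d}\subset\F_{P_d}$ and $\F_{fSG_d}\subset\F_{P_d}$, not $\F_{fSG_d}\subset\F_{SG_d}$. Consequently your second, hand-made argument for ``(1)$\Rightarrow$(6) via finite initial segments'' is correct but redundant (it is just the true inclusion $\F_{SG_d}\subset\F_{fSG_d}$ applied after (1)$\Rightarrow$(5)), while the implication \emph{out of} (6) --- the one you actually need to close the cycle --- is left unproved.

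The fix is already in the paper: use the ``moreover'' clause of Proposition \ref{longproof}(2), $\F_{fSG_d}\subset\F_{P_d}$, to get (6)$\Rightarrow$(8), and then Theorem \ref{poincare} gives (8)$\Rightarrow$(1). Alternatively, argue directly that any $SG_d(P^i)$ with $|P^i|\ge d+1$ contains the finite IP-set $FS(\{p^i_1,\ldots,p^i_{d+1}\})$ (the blocks of $0$'s between two $1$'s among $d+1$ consecutive indices have length at most $d-1<d$), so condition (6) forces $N(x,U)$ to contain finite IP-sets of length $d+1$ for every neighborhood $U$ of $y$, whence $(x,y)\in\RP^{[d]}$ by Theorem \ref{shaoye}(1). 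With either repair the cycle closes and the rest of your argument stands.
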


\section{$d$-step almost automorpy and recurrence sets}
In the previous sections we give some characterizations of
regionally proximal relation of order $d$. In the present section we
introduce and study $d$-step almost automorpy.

\subsection{Definition of $d$-step almost automorpy}

\subsubsection{}
First we recall the notion of $d$-step almost automorphic systems
and give its structure theorem.

\begin{de}
Let $(X,T)$ be a t.d.s. and $x\in X$, $d\in \N\cup\{\infty\}$.  $x$
is called an {\em $d$-step almost automorphic} point (or $d$-step AA
for short) if $\RP^{[d]}(Y)[x]=\{x\}$, where
$Y=\overline{\{T^nx:n\in \mathbb{Z}\}}$ and $\RP^{[d]}(Y)[x]=\{y\in
Y: (x,y)\in \RP^{[d]}(Y)\}$.

A minimal t.d.s. $(X,T)$ is called {\em $d$-step almost automorphic}
if it has a $d$-step almost automorphic point.
\end{de}

\begin{rem} Since
$$\RP^{[\infty]}\subseteq \ldots \subseteq \RP^{[d]}\subseteq \RP^{[d-1]}\subseteq
\ldots \subseteq \RP^{[1]},$$ we have
$$\text{AA}=\text{1-step AA}\Rightarrow \ldots \Rightarrow\text{(d-1)-step AA}
\Rightarrow\text{d-step AA}\Rightarrow \ldots \Rightarrow
\infty\!-\!\text{step AA}.$$
\end{rem}

\subsubsection{}
The following theorem follows from Theorem \ref{ShaoYe}.

\begin{thm}\label{thm-AA}
Let $(X,T)$ be a minimal t.d.s.. Then $(X,T)$ is a $d$-step almost
automorphic system for some $d\in \N\cup\{\infty\}$ if and only if
it is an almost one-to-one extension of its maximal $d$-step
nilfactor $(X_d, T)$.

\[
\begin{CD}
X @>{T}>> X\\
@V{\pi}VV      @VV{\pi}V\\
X_d @>{T }>> X_d
\end{CD}
\]

\end{thm}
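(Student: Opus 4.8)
The plan is to deduce this statement directly from Theorem \ref{ShaoYe}, essentially by unwinding the definitions. First I would note that since $(X,T)$ is minimal, the orbit closure $Y=\overline{\{T^nx:n\in\Z\}}$ occurring in the definition of a $d$-step almost automorphic point equals $X$ itself; hence a point $x\in X$ is $d$-step almost automorphic precisely when $\RP^{[d]}[x]=\{x\}$, and $(X,T)$ is a $d$-step almost automorphic system precisely when some such $x$ exists.

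Next I would invoke Theorem \ref{ShaoYe}(3) and (5) (together with its $d=\infty$ analogue, obtained along the lines of \cite{D-Y}): $\RP^{[d]}=\RP^{[d]}(X)$ is a closed invariant equivalence relation, and the maximal $d$-step nilfactor of $(X,T)$ is exactly the quotient $(X_d,T)=(X/\RP^{[d]},T)$, with $\pi:X\to X_d$ the canonical projection. The crucial observation is that, since $X_d$ is the quotient of $X$ by $\RP^{[d]}$, for every $x\in X$ one has $\pi^{-1}(\pi(x))=\RP^{[d]}[x]$. Therefore $\pi$ is injective at $x$, i.e. $\pi^{-1}(\pi(x))=\{x\}$, if and only if $\RP^{[d]}[x]=\{x\}$, that is, if and only if $x$ is a $d$-step almost automorphic point.

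It then remains to connect ``having a point with singleton fiber'' to ``being an almost one-to-one extension.'' The direction ($\Leftarrow$) is immediate from the definition of an almost one-to-one extension. For ($\Rightarrow$) I would use the classical fact that if $\pi:(X,T)\to(Y,S)$ is a factor map between minimal systems and $\pi^{-1}(\pi(x_0))=\{x_0\}$ for some $x_0$, then $\{x\in X:\pi^{-1}(\pi(x))=\{x\}\}$ is a dense $G_\delta$ subset of $X$, so $\pi$ is an almost one-to-one extension. Chaining the equivalences gives: $(X,T)$ has a $d$-step almost automorphic point $\iff$ $\pi:X\to X_d$ is injective at some point $\iff$ $\pi$ is an almost one-to-one extension of the maximal $d$-step nilfactor $(X_d,T)$, which is the asserted statement (the commuting square being just the factor relation $\pi\circ T=T\circ\pi$).

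As for difficulties: the argument is essentially formal once Theorem \ref{ShaoYe} is available, so I do not anticipate a serious obstacle. The only points needing a little care are (a) confirming the $d=\infty$ version of Theorem \ref{ShaoYe}(5), namely that $X/\RP^{[\infty]}$ is indeed the maximal $\infty$-step nilfactor (an inverse limit of nilsystems), which follows from $\RP^{[\infty]}=\bigcap_d\RP^{[d]}$, the compatibility of the quotient maps $X/\RP^{[d]}\to X/\RP^{[d']}$ for $d\ge d'$, and the inverse-limit description; and (b) recalling (or citing) the standard residuality statement for the set of injectivity points of a factor map between minimal systems. Neither should cause real trouble.
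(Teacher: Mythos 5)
Your proposal is correct and follows essentially the same route as the paper: the authors simply remark that the theorem "follows from Theorem \ref{ShaoYe}", i.e.\ from the facts that $\RP^{[d]}$ is a closed invariant equivalence relation whose quotient is the maximal $d$-step nilfactor, so that the fibers of $\pi$ are exactly the classes $\RP^{[d]}[x]$ and a singleton fiber at one point is equivalent to (and, by the classical residuality fact for factor maps of minimal systems, implies the full strength of) being an almost one-to-one extension. Your unwinding of the definitions, including the $d=\infty$ case via $\RP^{[\infty]}=\bigcap_d\RP^{[d]}$, is exactly the intended argument.
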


\medskip

\subsection{$1$-step almost automorphy}
First we recall some classical results about almost automorphy.

Let $(X,T)$ be a minimal t.d.s.. In \cite{V68} it is proved that
$(x,y)\in \RP^{[1]}$ if and only if for each neighborhood $U$ of
$y$, $N(x,U)$ contains some $\D$-set, see also Theorem
\ref{huang12}. Similarly, we have for a minimal system $(X,T)$,
$(x,y)\in \RP^{[1]}$ if and only if for each neighborhood $U$ of
$y$, $N(x,U)\in \F_{Poi}$ \cite{HLY}, see also Theorem \ref{rpd}.

Using these theorems and the facts that $\F_{Poi}$ and $\F_{Bir}$
have the Ramsey property, one has

\begin{thm}
Let $(X,T)$ be a minimal t.d.s. and $x\in X$. Then the following
statements are equivalent:
\begin{enumerate}
\item $x$ is AA.

\item $N(x,V)\in \F_{Poi}^*$ for each
neighborhood $V$ of $x$.

\item $N(x,V)\in \F_{Bir}^*$ for each
neighborhood $V$ of $x$.

\item \cite{F} $N(x,V)\in \Delta^*$ for each
neighborhood $V$ of $x$.
\end{enumerate}
\end{thm}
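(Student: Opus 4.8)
The plan is to establish the cycle of implications $(1)\Rightarrow(2)\Rightarrow(4)$ and $(1)\Rightarrow(3)\Rightarrow(4)$, together with $(4)\Rightarrow(1)$ (or $(2)\Rightarrow(1)$, whichever is cleaner), using as the main engine the characterization of $\RP^{[1]}$ recalled just above: for a minimal t.d.s.\ $(X,T)$ one has $(x,y)\in\RP^{[1]}$ if and only if $N(x,U)\in\F_{Poi}$ for each neighborhood $U$ of $y$ (from \cite{HLY}, see Theorem \ref{rpd}), and also if and only if $N(x,U)$ contains a $\D$-set for each such $U$ (Veech \cite{V68}). Since $\RP^{[1]}$ is a closed invariant equivalence relation on the minimal system $X$, the point $x$ is AA precisely when $\RP^{[1]}[x]=\{x\}$.

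\emph{The implication $(1)\Rightarrow(2)$ (and $(1)\Rightarrow(3)$, $(1)\Rightarrow(4)$).} Suppose $x$ is AA, i.e.\ $\RP^{[1]}[x]=\{x\}$, and fix a neighborhood $V$ of $x$. I must show $N(x,V)\in\F_{Poi}^*$, that is, $N(x,V)$ meets every Poincar\'e recurrence set. Let $P\in\F_{Poi}$; I want $P\cap N(x,V)\neq\emptyset$. The idea is to argue by contradiction: if $P\cap N(x,V)=\emptyset$, then $\Z\setminus N(x,V)\supseteq P$, so $\Z\setminus N(x,V)\in\F_{Poi}$, i.e.\ $N(x,V)\notin\F_{Poi}^*$; equivalently $\Z\setminus N(x,V)$ is a set of (measurable) recurrence. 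One then wants to convert this into a regionally proximal pair. Concretely, consider the complement $W=X\setminus\overline{\{T^nx : n\notin N(x,V)\}}$... actually the cleaner route: choose a smaller neighborhood $V'$ with $\overline{V'}\subseteq V$; since $x$ is almost automorphic and the system is minimal, the factor map $\pi\colon X\to X_{eq}=X/\RP^{[1]}$ is almost one-to-one and $\pi^{-1}(\pi(x))=\{x\}$; pick a neighborhood $\widetilde V$ of $\pi(x)$ in $X_{eq}$ with $\pi^{-1}(\widetilde V)\subseteq V$. Then $N(x,V)\supseteq N(\pi(x),\widetilde V)$, and $N(\pi(x),\widetilde V)$ is a Bohr$_0$-set, hence a $\D^*$-set and (since $\F_{Poi}\subseteq\F_{Bir}\subseteq\F^*_{1,0}$ by Theorem \ref{HSY2011}, and Bohr$_0$-sets are in $\F_{1,0}$) it is in $\F^*_{Poi}$ as well — more directly, every Bohr$_0$-set is a set that meets every Poincar\'e recurrence set, because an equicontinuous system and a positive-measure set always have a common return time inside any Bohr$_0$ neighborhood by the Poincar\'e recurrence theorem applied in the equicontinuous factor. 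This gives $N(x,V)\in\F_{Poi}^*$, hence also $N(x,V)\in\F_{Bir}^*$ (as $\F_{Bir}\supseteq\F_{Poi}$ implies $\F_{Bir}^*\subseteq\F_{Poi}^*$, so I need the reverse containment argument: actually $\F_{Poi}\subseteq\F_{Bir}$ gives $\F_{Bir}^*\subseteq\F_{Poi}^*$, so I should instead show directly that $N(\pi(x),\widetilde V)$ meets every Birkhoff recurrence set, which again holds since a minimal equicontinuous system returns into any neighborhood of a point along a $\D^*$-set), and likewise $N(x,V)\in\Delta^*$.

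\emph{The implication $(2)\Rightarrow(1)$ (similarly $(3)\Rightarrow(1)$, $(4)\Rightarrow(1)$).} Suppose $N(x,V)\in\F_{Poi}^*$ for every neighborhood $V$ of $x$; I must show $\RP^{[1]}[x]=\{x\}$. Let $y\in\RP^{[1]}[x]$. By the $d=1$ case of Theorem \ref{rpd} (the \cite{HLY} characterization), since $\RP^{[1]}$ is symmetric we have $(y,x)\in\RP^{[1]}$, hence $N(y,V)\in\F_{Poi}$ for every neighborhood $V$ of $x$. On the other hand $N(x,V)\in\F_{Poi}^*$, so $N(x,V)\cap N(y,V)\neq\emptyset$ for every neighborhood $V$ of $x$. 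Taking a nested neighborhood basis $\{V_k\}$ of $x$, choose $n_k\in N(x,V_k)\cap N(y,V_k)$; then $T^{n_k}x\to x$ and $T^{n_k}y\to x$. This forces $x$ and $y$ to be proximal; but a proximal pair in $\RP^{[1]}$ that are also... here I invoke minimality: the pair $(x,y)$ is proximal and, since $\RP^{[1]}$ is an equicontinuous structure relation, $\pi(x)=\pi(y)$; a proximal pair whose images under an almost one-to-one factor map to an equicontinuous (distal) system coincide... more simply, in a minimal system a proximal pair that is regionally proximal forces equality once we know $y\in\overline{\F^{[2]}}$-fiber data — cleanest: $T^{n_k}x\to x$ and $T^{n_k}y\to x$ means $(x,y)$ is proximal, and since $(X_{eq},T)$ is distal (equicontinuous) and $\pi(x)=\pi(y)$, the pair lies in a single fiber; proximality then gives, by the standard fact that distal extensions... no — use: an equicontinuous system has no nontrivial proximal pairs, and proximality is preserved by factor maps, so $\pi(x)=\pi(y)$ automatically; to conclude $x=y$ I use that $x$ is a point with $\pi^{-1}(\pi(x))$ a single point is what I want to prove, so instead I argue: proximality of $(x,y)$ plus $y\in\overline{\mathcal O(x)}=X$ minimal plus $T^{n_k}(x,y)\to(x,x)$ — by a Ramsey/IP-return argument the set $\{n : \rho(T^nx,T^ny)<\epsilon\}$ is large, and combined with $\F_{Poi}^*$-recurrence of $x$ one iterates to show the orbit closure of $(x,y)$ meets the diagonal in a minimal subset containing $(x,x)$, whence $y=x$. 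I will instead route this through Theorem \ref{rpd}: $(x,y)\in\RP^{[1]}$ gives $N(x,U)\in\F_{Poi}$ for $U$ a neighborhood of $y$; if $y\neq x$ pick $U$ with $x\notin\overline U$ so $N(x,U)\subseteq\Z\setminus N(x,V)$ for a suitable neighborhood $V$ of $x$; then $N(x,V)\cap N(x,U)=\emptyset$ contradicts $N(x,V)\in\F_{Poi}^*$ since $N(x,U)\in\F_{Poi}$.

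\emph{Equivalences among $(2),(3),(4)$.} These follow formally once each is shown equivalent to $(1)$; alternatively, the chain $\F_{Poi}\subseteq\F_{Bir}\subseteq\F^*_{1,0}$ and the fact that $\Delta^*$-sets are exactly the duals of $\Delta$-sets give inclusions $\Delta^*\subseteq\F_{Bir}^*\subseteq\F_{Poi}^*$ among the dual families, together with the reverse implications coming from $(1)$, pinning down the equivalences. \textbf{Main obstacle.} The delicate point is the $(2)\Rightarrow(1)$ direction: one must rule out that $N(x,V)$ is $\F_{Poi}^*$ while $\RP^{[1]}[x]$ is still nontrivial, and the honest way to do this is precisely the duality argument of the last paragraph — for $y\neq x$ in $\RP^{[1]}[x]$, exhibit a set in $\F_{Poi}$ (namely $N(x,U)$ for a neighborhood $U$ of $y$ bounded away from $x$, using Theorem \ref{rpd}) disjoint from some $N(x,V)$, contradicting $\F_{Poi}^*$-recurrence. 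Making sure the neighborhoods $U$ of $y$ and $V$ of $x$ can be chosen with $N(x,U)\cap N(x,V)=\emptyset$ requires only $x\neq y$ and continuity, so no Ramsey property of $\F_{Poi}$ is actually needed for the equivalences themselves — but the Ramsey property of $\F_{Poi}$ and $\F_{Bir}$ (Proposition \ref{ramseyp}) is what guarantees that $\F_{Poi}^*$ and $\F_{Bir}^*$ are filters, which is the conceptual reason these dual families behave well and is invoked in the statement's lead-in; I would remark on this but the core proof is the two-way argument through Theorem \ref{rpd} just outlined.
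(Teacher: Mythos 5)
Your proposal is correct, and its backward half is essentially the paper's own argument, but its forward half takes a genuinely different route. For $(2),(3),(4)\Rightarrow(1)$ you arrive (after some unnecessary detours through proximality that you should delete) at exactly the argument the paper uses in Theorem \ref{AAgeneral}: if $y\in\RP^{[1]}[x]$ with $y\neq x$, choose disjoint neighborhoods $U$ of $y$ and $V$ of $x$; then $N(x,U)\in\F_{Poi}$ (resp.\ $\F_{Bir}$, resp.\ contains a $\D$-set) by Theorem \ref{rpd}, while $N(x,U)\cap N(x,V)=N(x,U\cap V)=\emptyset$, contradicting $N(x,V)\in\F_{Poi}^*$ (resp.\ $\F_{Bir}^*$, resp.\ $\D^*$). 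For $(1)\Rightarrow(2),(3),(4)$, however, the paper argues by contraposition: if $N(x,V)\notin\F^*$ then $N(x,V^c)\in\F$, and the Ramsey property of $\F$ plus the nested-covering argument of Theorem \ref{aainfty} produces $y\in V^c$ with $N(x,U)\in\F$ for every neighborhood $U$ of $y$, whence $y\in\RP^{[1]}[x]\setminus\{x\}$. You instead argue directly: since $\RP^{[1]}[x]=\{x\}$, the factor map $\pi$ onto the maximal equicontinuous factor satisfies $\pi^{-1}(\pi(x))=\{x\}$, so (because $\pi$ is closed) every neighborhood $V$ of $x$ contains $\pi^{-1}(\widetilde V)$ for some neighborhood $\widetilde V$ of $\pi(x)$, hence $N(x,V)\supseteq N(\pi(x),\widetilde V)$ is a Bohr$_0$-set; you then use that Bohr$_0$-sets lie in $\F_{Poi}^*\cap\F_{Bir}^*\cap\D^*$, which is the $d=1$ dual of Theorem \ref{HSY2011} (your sketch via Poincar\'e recurrence for the Haar measure plus equicontinuity is a correct direct proof of this). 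Both routes rest on the same two pillars, Theorem \ref{rpd} and Theorem \ref{ShaoYe}; yours trades the Ramsey/covering argument for the structure theorem for AA points, which makes the forward implication more transparent and would equally prove $(1)\Rightarrow(2)$ of Theorem \ref{AAgeneral} for general $d$ with Nil$_d$ Bohr$_0$-sets in place of Bohr$_0$-sets. Your closing remark is accurate: on your route the Ramsey property of $\F_{Poi}$ and $\F_{Bir}$ is not needed, whereas it is essential to the paper's forward implication.
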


We will not give the proof of this theorem since it is the special
case of Theorem~ \ref{AAgeneral}.

\subsection{$\infty$-step almost automorphy}

In this subsection we give one characterization for $\infty$-step
AA. Followed from Theorem \ref{ShaoYe}, one has

\begin{thm}\label{shaoye}
Let $(X,T)$ be a minimal t.d.s. and $d\ge 1$. Then

\begin{enumerate}
\item $(x,y)\in \RP^{[d]}$ if and only if $N(x,U)$ contains a finite IP-set of
length $d+1$ for any neighborhood $U$ of $y$, and thus

\item $(x,y)\in \RP^{[\infty]}$ if and only if $N(x,U)\in \F_{fip}$ for any
neighborhood $U$ of $y$.
\end{enumerate}
\end{thm}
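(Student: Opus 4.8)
The plan is to deduce both statements directly from Theorem \ref{ShaoYe}(1), which identifies $\RP^{[d]}$ with membership of $(x,y^{[d+1]}_*)$ in $\overline{\F^{[d+1]}}(x^{[d+1]})$, together with the observation that the face group $\F^{[d+1]}$ acts on $X^{[d+1]}$ in a way that encodes exactly the finite IP sums $FS(\{n_i\}_{i=1}^{d+1})$. Recall that a point of $\overline{\O(x^{[d+1]},\F^{[d+1]})}$ is a limit of points $S_k x^{[d+1]}$ with $S_k\in\F^{[d+1]}$, and that each such $S_k$, being a product of face transformations, moves the coordinate $x$ sitting at position $\ep\in\{0,1\}^{d+1}$ to $T^{m_\ep}x$ where $m_\ep=\sum_{i:\,\ep_i=1}n_i$ for some vector ${\bf n}=(n_1,\ldots,n_{d+1})\in\Z^{d+1}$ depending on $S_k$; in particular the last coordinate (at $\ep=(1,\ldots,1)$) is $T^{n_1+\cdots+n_{d+1}}x$ and, more generally, reading off the coordinates indexed by $\ep$ ranging over $\{0,1\}^{d+1}$ produces precisely the values $T^n x$ for $n\in FS(\{n_i\}_{i=1}^{d+1})\cup\{0\}$.

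For statement (1), first suppose $(x,y)\in\RP^{[d]}$ and let $U$ be a neighborhood of $y$. By Theorem \ref{ShaoYe}(1), the point $(x,y,y,\ldots,y)=(x,y^{[d+1]}_*)$ lies in $\overline{\F^{[d+1]}}(x^{[d+1]})$, so there is $S\in\F^{[d+1]}$ with $S x^{[d+1]}$ close enough to $(x,y^{[d+1]}_*)$ that every coordinate indexed by a nonempty $\ep$ lies in $U$. Writing $S$ in terms of its associated vector ${\bf n}=(n_1,\ldots,n_{d+1})$ as above, this says exactly that $T^n x\in U$ for every $n\in FS(\{n_i\}_{i=1}^{d+1})$; hence $N(x,U)\supseteq FS(\{n_i\}_{i=1}^{d+1})$, a finite IP set of length $d+1$. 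Conversely, if $N(x,U)$ contains such a set $FS(\{n_i\}_{i=1}^{d+1})$ for every neighborhood $U$ of $y$, then for each $\delta>0$ we may pick ${\bf n}$ with $T^nx\in B(y,\delta)$ for all $n\in FS(\{n_i\}_{i=1}^{d+1})$; in particular, taking $x'=x$, $y'=x$ (or rather invoking the definition of $\RP^{[d]}$ with $x'=x$, and noting $T^{{\bf n}\cdot\ep}x$ is within $\delta$ of $y$ for all nonempty $\ep\subset[d]$ when one restricts to the first $d$ coordinates) one verifies the defining condition — more cleanly, one checks that $(x,y^{[d+1]}_*)\in\overline{\F^{[d+1]}}(x^{[d+1]})$ by constructing the approximating sequence of face-group elements from the vectors ${\bf n}$ associated to shrinking neighborhoods, and then applies Theorem \ref{ShaoYe}(1) again to conclude $(x,y)\in\RP^{[d]}$. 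Statement (2) is then immediate: $(x,y)\in\RP^{[\infty]}=\bigcap_{d\ge1}\RP^{[d]}$ iff $N(x,U)$ contains a finite IP set of length $d+1$ for every $d$ and every neighborhood $U$ of $y$, which is exactly the statement $N(x,U)\in\F_{fip}$ for every neighborhood $U$ of $y$, by the definition of $\F_{fip}$ as the family of sets containing finite IP sets of arbitrarily long length.

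The main obstacle is the bookkeeping in the converse direction of (1): one must check carefully that an arbitrary finite IP set $FS(\{n_i\}_{i=1}^{d+1})$ sitting inside $N(x,U)$ really does force $(x,y^{[d+1]}_*)$ into the orbit closure of $x^{[d+1]}$ under the face group, i.e. that the combinatorics of $\{0,1\}^{d+1}$ labelling the coordinates of $X^{[d+1]}$ matches the subset-sum structure of $FS$. This is where one uses the precise inductive definition of the face transformations $T^{[d]}_j$ from the Preliminaries, together with compactness of $X^{[d+1]}$ to pass from the sequence of vectors ${\bf n}$ (indexed by a neighborhood basis at $y$) to a genuine limit point of the orbit. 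Since $d$ is arbitrary but finite in (1), no new difficulty arises for (2) beyond intersecting over all $d$.
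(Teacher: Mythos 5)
Your proof is correct and follows essentially the same route as the paper, which derives this theorem directly from Theorem \ref{ShaoYe}(1) via the identification of $\overline{\F^{[d+1]}}(x^{[d+1]})$ with the closure of the points $(T^{{\bf n}\cdot\ep}x)_{\ep}$, whose nontrivial coordinates realize exactly $FS(\{n_i\}_{i=1}^{d+1})$. The only cosmetic remark is that your first, parenthetical attempt at the converse (setting $x'=y'=x$ in the definition of $\RP^{[d]}$) is muddled, but the ``more cleanly'' argument you then give via the orbit closure is the right one and suffices.
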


To show the next theorem we need the following lemma which should be
known, see for example Huang, Li and Ye \cite{HLY2}.

\begin{lem}
$\F_{fip}$ has the Ramsey property.
\end{lem}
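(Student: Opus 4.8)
The plan is to prove that $\F_{fip}$ has the Ramsey property, i.e. if $A = A_1 \cup A_2 \in \F_{fip}$, then $A_1 \in \F_{fip}$ or $A_2 \in \F_{fip}$. Recall that $A \in \F_{fip}$ means $A$ contains finite IP-sets $FS(\{p^{(k)}_i\}_{i=1}^{k})$ of arbitrarily large length $k$. So fix such sets; for each $k$ we have $p^{(k)}_1, \ldots, p^{(k)}_k \in \Z$ with $FS(\{p^{(k)}_i\}_{i=1}^{k}) \subseteq A = A_1 \cup A_2$. The goal is to find one of $A_1, A_2$ inside of which there are finite IP-sets of every length.

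The key step is a Hindman-type pigeonhole argument applied within each fixed finite IP-set. Concretely, I would invoke the finite version of Hindman's theorem (the Folkman–Rado–Sanders theorem): for every $m$ there is $n = n(m)$ such that whenever an IP-set $FS(\{q_1,\ldots,q_n\})$ generated by $n$ elements is $2$-colored, one of the color classes contains an IP-set $FS(\{q'_1,\ldots,q'_m\})$ generated by $m$ of the (sums of the) original elements — more precisely, there is a sub-IP-structure of size $m$ lying entirely in one color. Applying this with the $2$-coloring of $\Z$ given by membership in $A_1$ versus $A_2$ (breaking ties toward $A_1$), for each target length $m$ pick $k \ge n(m)$; then $FS(\{p^{(k)}_i\}_{i=1}^{k}) \subseteq A$ gets $2$-colored, so some color class $c(m) \in \{1,2\}$ contains a finite IP-set of length $m$. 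Since there are only two colors, one value $c \in \{1,2\}$ occurs for infinitely many $m$, and hence (finite IP-sets of length $m$ contain finite IP-sets of every length $\le m$) that $A_c$ contains finite IP-sets of arbitrarily large length, i.e. $A_c \in \F_{fip}$. Because $\F_{fip}$ is a family (hereditary upward), this already gives $A_c \in \F_{fip}$ as required; the same argument handles a partition into finitely many pieces by iterating or by using the $r$-color version of Folkman's theorem.

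The main obstacle — really the only subtle point — is making sure the finitary Hindman/Folkman statement is applied correctly: one needs that a monochromatic sub-IP-set of size $m$ genuinely sits inside the original $FS(\{p^{(k)}_i\})$, which is guaranteed by taking the new generators to be finite sums over disjoint blocks of $\{1,\ldots,k\}$, so that their own finite sums are again finite sums of the $p^{(k)}_i$ and hence still lie in $A$. I would cite the standard reference (Folkman's theorem, or Huang–Li–Ye \cite{HLY2} as indicated in the paper) for the finite Hindman theorem rather than reprove it. Everything else is bookkeeping: the two-color pigeonhole on $\{c(m)\}_m$ and the trivial observation that $\F_{fip}$ is upward hereditary so that passing to a subcollection of lengths loses nothing.
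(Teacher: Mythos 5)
Your argument is correct. Note, however, that the paper does not actually prove this lemma: it is stated with the remark that it ``should be known'' and a pointer to Huang--Li--Ye \cite{HLY2}, so there is no in-paper proof to compare against. Your route --- reduce to the finitary form of Hindman's theorem (equivalently, the finite-unions form of Folkman's theorem): for every $m$ there is $n(m)$ so that any $2$-colouring of $FS(\{q_1,\dots,q_{n(m)}\})$ admits disjoint blocks $\a_1,\dots,\a_m\subseteq\{1,\dots,n(m)\}$ whose block sums $y_j=\sum_{i\in\a_j}q_i$ generate a monochromatic $FS(\{y_j\}_{j=1}^m)$ --- is a standard and legitimate way to establish the lemma, and you correctly identify the one point that matters: the new generators must be sums over \emph{disjoint} blocks so that $FS(\{y_j\})\subseteq FS(\{q_i\})\subseteq A$, which is exactly what the finite-unions version delivers (plain Folkman for integers, which produces arbitrary monochromatic generators in an interval, would not suffice). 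The remaining steps --- choosing $k\ge n(m)$ using $A\in\F_{fip}$, the pigeonhole over the colours $c(m)$, and the observation that a length-$m$ finite IP-set contains ones of all shorter lengths --- are all sound. The only caveat is that the finitary Hindman/Folkman statement is itself a nontrivial input (usually obtained from Hindman's theorem by compactness), so in a self-contained write-up you should either cite it precisely in its finite-unions form or include the compactness derivation.
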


We have the following

\begin{thm}\label{aainfty}
Let $(X,T)$ be a minimal t.d.s.. Then $(X,T)$ is $\infty$-step AA if
and only if there is $x\in X$ such that $N(x,V)\in \F_{fip}^*$ for
each neighborhood $V$ of $x$.
\end{thm}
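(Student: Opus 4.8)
The plan is to deduce Theorem \ref{aainfty} from Theorem \ref{shaoye}(2) together with the Ramsey property of $\F_{fip}$, following exactly the template that governs the $1$-step case and Theorem \ref{intro-20}. The statement to prove is: for a minimal t.d.s. $(X,T)$, the system is $\infty$-step AA if and only if there is $x\in X$ with $N(x,V)\in\F_{fip}^*$ for every neighborhood $V$ of $x$.

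\medskip

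\textbf{The easy direction} ($\infty$-step AA $\Rightarrow$ the recurrence condition). Suppose $x\in X$ is an $\infty$-step AA point, i.e. $\RP^{[\infty]}[x]=\{x\}$. I must show $N(x,V)\in\F_{fip}^*$ for each neighborhood $V$ of $x$; equivalently, $N(x,V)$ meets every $A\in\F_{fip}$. Argue by contradiction: if some $A\in\F_{fip}$ is disjoint from $N(x,V)$, then $N(x,V)$ is contained in $\Z\setminus A$, which lies in $\F_{fip}^c$. I would like to produce from this a point $y\in\RP^{[\infty]}[x]$ with $y\neq x$, contradicting almost automorphy. The mechanism is the standard one: using minimality and the Ramsey property of $\F_{fip}$ one shows that for a minimal system, if $x$ is \emph{not} $\F_{fip}^*$-recurrent then there is $y\neq x$ with $N(x,U)\in\F_{fip}$ for every neighborhood $U$ of $y$; by Theorem \ref{shaoye}(2) this gives $(x,y)\in\RP^{[\infty]}$, so $\RP^{[\infty]}[x]\neq\{x\}$. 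Concretely: the set $\{N(x,U):U \text{ a neighborhood of some } y\in X\}$ generates, via minimality, a filter-type structure on which the Ramsey property of $\F_{fip}$ lets one select a single point $y$ witnessing failure; this is precisely the argument used to pass from Theorem \ref{intro-10} to Theorem \ref{intro-20}, and I would reproduce it here verbatim with $\F_{fip}$ in place of $\F_{Poi_d}$.

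\medskip

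\textbf{The other direction} (the recurrence condition $\Rightarrow$ $\infty$-step AA). Assume $x\in X$ satisfies $N(x,V)\in\F_{fip}^*$ for every neighborhood $V$ of $x$. I claim $\RP^{[\infty]}[x]=\{x\}$. Let $y\in\RP^{[\infty]}[x]$. By Theorem \ref{shaoye}(2), $N(x,U)\in\F_{fip}$ for every neighborhood $U$ of $y$. Now suppose $y\neq x$ and choose disjoint neighborhoods $V$ of $x$ and $U$ of $y$. Then $N(x,U)\in\F_{fip}$ while $N(x,V)\in\F_{fip}^*$, hence $N(x,U)\cap N(x,V)\neq\emptyset$; but $N(x,U)\cap N(x,V)=\{n:T^nx\in U\cap V\}=\emptyset$ since $U\cap V=\emptyset$, a contradiction. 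Therefore $y=x$, so $\RP^{[\infty]}[x]=\{x\}$ and $(X,T)$ is $\infty$-step AA by definition. This direction needs only Theorem \ref{shaoye}(2) and no Ramsey property.

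\medskip

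\textbf{Main obstacle.} The only nontrivial point is the passage, in the forward direction, from ``$x$ is not $\F_{fip}^*$-recurrent'' to ``there exists $y\neq x$ with $(x,y)\in\RP^{[\infty]}$''. This is where the Ramsey property of $\F_{fip}$ (the preceding lemma) is essential and where one must invoke minimality to make the selection of $y$: one covers the failure-neighborhood $V$ of $x$ by finitely many sets and uses the Ramsey property to find a single return-time set still in $\F_{fip}$ accumulating away from $x$, then takes a limit point $y$ of the corresponding orbit points and checks $N(x,U)\in\F_{fip}$ for all neighborhoods $U$ of $y$. Since this argument is identical in structure to the proof of Theorem \ref{intro-20}, I would simply say ``by the same argument as in the proof of Theorem \ref{intro-20}, using the Ramsey property of $\F_{fip}$ and Theorem \ref{shaoye}(2) in place of Theorems \ref{huang10}/\ref{HSY2011}'' and not rewrite the bookkeeping.
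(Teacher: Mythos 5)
Your proposal is correct and follows essentially the same route as the paper: the backward direction is the disjoint-neighborhoods argument via Theorem \ref{shaoye}(2), and the forward direction is the nested-covering argument driven by the Ramsey property of $\F_{fip}$ (which the paper writes out in full inside this very proof, rather than deferring to Theorem \ref{intro-20}). One small slip in your sketch of the key step: the set to be covered by small closed balls is $V^c$ (since it is $N(x,V^c)$ that lies in $\F_{fip}$ when $N(x,V)\notin\F_{fip}^*$), not the neighborhood $V$ itself; this is what guarantees the limit point $y$ differs from $x$.
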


\begin{proof}
Assume that there is $x\in X$ such that $N(x,V)\in \F_{fip}^*$ for
each neighborhood $V$ of $x$. If there is $y\in X$ such that
$(x,y)\in\RP^{[\infty]}$, then by Proposition \ref{shaoye} for any
neighborhood $U$ of $y$, $N(x,U)\in\F_{fip}$. This implies that
$x=y$, i.e. $(X,T)$ is $\infty$-step AA.

Now assume that $(X,T)$ is $\infty$-step AA, i.e. there is $x\in X$
such that $\RP^{[\infty]}[x]=\{x\}$. If for some neighborhood $V$ of
$x$, $N(x,V)\not\in \F_{fip}^*$, then $N(x,V^c)$ contains finite
IP-sets of arbitrarily long lengths.

Let $U_1=V^c$. Covering $U_1$ by finitely many closed balls
$U_1^1,\ldots, U_1^{i_1}$ of diam $\le 1$. Then there is $j_1$ such
that $N(x,U_1^{j_1})$ contains finite IP-sets of arbitrarily long
lengths. Let $U_2=U_1^{j_1}$. Covering $U_1$ by finitely many closed
balls $U_2^1,\ldots, U_2^{i_2}$ of diam $\le \frac{1}{2}$. Then
there is $j_2$ such that $N(x,U_2^{j_2})$ contains finite IP-sets of
arbitrarily long lengths. Let $U_3=U_2^{j_2}$. Inductively, there
are a sequence of closed balls $U_n$ with diam $\le \frac{1}{n}$
such that $N(x,U_n)$ contains finite IP-sets of arbitrarily long
lengths. Let $\{y\}=\bigcap U_n$. It is clear that $(x,y)\in
\RP^{[\infty]}$ with $y\not=x$, a contradiction. Thus $N(x,V)\in
\F_{fip}^*$ for each neighborhood $V$ of $x$.
\end{proof}

\subsection{Characterization of $d$-step almost automorphy}

Now we use the results built in previous sections to get the
following characterization for $d$-step AA via recurrence sets.

\begin{thm}\label{AAgeneral}
Let $(X,T)$ be a minimal t.d.s., $x\in X$ and
$d\in\N\cup\{\infty\}$. Then the following statements are
equivalent:
\begin{enumerate}
\item $x$ is $d$-step AA point.

\item $N(x,V)\in \F_{d,0}$ for each neighborhood $V$ of $x$.

\item $N(x,V)\in \F_{Poi_d}^*$ for each
neighborhood $V$ of $x$.

\item $N(x,V)\in \F_{Bir_d}^*$ for each
neighborhood $V$ of $x$.

\end{enumerate}
\end{thm}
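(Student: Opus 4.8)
The plan is to prove the cycle of implications $(1)\Leftrightarrow(2)$ directly, then $(2)\Leftrightarrow(4)$ and $(2)\Leftrightarrow(3)$ using the Ramsey/filter machinery, so that all four are equivalent. The key structural input is Theorem \ref{rpd} (equivalently Theorem \ref{several}): for a minimal t.d.s., $(x,y)\in\RP^{[d]}$ if and only if $N(x,U)\in\F_{d,0}^*$ for every neighborhood $U$ of $y$, if and only if $N(x,U)\in\F_{Poi_d}$ for every such $U$, if and only if $N(x,U)\in\F_{Bir_d}$ for every such $U$. The second key input is that $\F_{d,0}$ is a filter and hence $\F_{d,0}^*$ has the Ramsey property (the Proposition right after the definition of $\F_{d,0}$), and similarly $\F_{Poi_d}$ and $\F_{Bir_d}$ have the Ramsey property (Proposition \ref{ramseyp}); dually, $\F_{Poi_d}^*$ and $\F_{Bir_d}^*$ are filters and $\F_{d,0}$, being a filter, equals $(\F_{d,0}^*)^*$.

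First I would prove $(1)\Rightarrow(2)$. Suppose $x$ is a $d$-step AA point, i.e. $\RP^{[d]}[x]=\{x\}$, and fix a neighborhood $V$ of $x$; we must show $N(x,V)\in\F_{d,0}$. Suppose not; then $\Z\setminus N(x,V)=N(x,V^c)$ (after intersecting with a slightly smaller closed neighborhood) lies in $\F_{d,0}^*$. Running the standard shrinking-neighborhood argument — cover the complement of $V$ by finitely many closed balls of diameter $\le 1/n$ and use that $\F_{d,0}^*$ has the Ramsey property to pass at each stage to a ball whose return-time set is still in $\F_{d,0}^*$ — produces a nested sequence of closed balls $U_n$ with $\operatorname{diam}U_n\to 0$, $\bigcap_n U_n=\{y\}$ with $y\neq x$, and $N(x,U_n)\in\F_{d,0}^*$ for all $n$. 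Since every neighborhood $U$ of $y$ contains some $U_n$, we get $N(x,U)\in\F_{d,0}^*$ for every neighborhood $U$ of $y$, so $(x,y)\in\RP^{[d]}$ by Theorem \ref{rpd}, contradicting $\RP^{[d]}[x]=\{x\}$. This is exactly the argument used in the proof of Theorem \ref{aainfty}, just with $\F_{fip}^*$ replaced by $\F_{d,0}^*$; the only thing to check is that the families in question have the Ramsey property, which we have. For $(2)\Rightarrow(1)$: if $N(x,V)\in\F_{d,0}$ for every neighborhood $V$ of $x$ and $(x,y)\in\RP^{[d]}$, then for every neighborhood $U$ of $y$ we have $N(x,U)\in\F_{d,0}^*$ (Theorem \ref{rpd}); choosing $U,V$ disjoint when $y\neq x$ forces $N(x,U)\cap N(x,V)=\emptyset$ with $N(x,V)\in\F_{d,0}$, contradicting $N(x,U)\in\F_{d,0}^*$. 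Hence $\RP^{[d]}[x]=\{x\}$.

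The implications $(2)\Leftrightarrow(4)$ and $(2)\Leftrightarrow(3)$ go the same way with $\F_{d,0}$ replaced by $\F_{Bir_d}^*$ and $\F_{Poi_d}^*$ respectively, using that $\F_{Bir_d}$ and $\F_{Poi_d}$ have the Ramsey property (so their duals are filters, making the shrinking-ball argument run), and that Theorem \ref{rpd} characterizes $\RP^{[d]}$ via $N(x,U)\in\F_{Poi_d}$ and via $N(x,U)\in\F_{Bir_d}$. Concretely, for $(3)\Rightarrow(1)$: if $N(x,V)\in\F_{Poi_d}^*$ for every neighborhood $V$ of $x$ and there were $y\neq x$ with $(x,y)\in\RP^{[d]}$, pick disjoint neighborhoods $U$ of $y$ and $V$ of $x$; by Theorem \ref{rpd}, $N(x,U)\in\F_{Poi_d}$, while $N(x,V)\in\F_{Poi_d}^*$ means $N(x,V)$ meets every member of $\F_{Poi_d}$, contradicting disjointness. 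For $(1)\Rightarrow(3)$: run the shrinking-ball construction with $\F_{Poi_d}^*$ in place of $\F_{d,0}^*$, which is legitimate precisely because $\F_{Poi_d}^*$ has the Ramsey property (its dual $\F_{Poi_d}$ being… no — rather, $\F_{Poi_d}$ has the Ramsey property, hence $\F_{Poi_d}^*$ is a filter; one instead needs the Ramsey property for the family we are splitting, which here is $\F_{Poi_d}^*$, so the cleaner route is: if $(1)$ holds but $N(x,V)\notin\F_{Poi_d}^*$ for some $V$, then $N(x,V^c)\in\F_{Poi_d}$, and since $\F_{Poi_d}$ has the Ramsey property we may shrink $V^c$ to balls of vanishing diameter keeping the return set in $\F_{Poi_d}$, producing $y\neq x$ with $N(x,U)\in\F_{Poi_d}$ for all neighborhoods $U$ of $y$, i.e. $(x,y)\in\RP^{[d]}$, a contradiction). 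The same remark applies verbatim to $\F_{Bir_d}$.

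The main obstacle is bookkeeping rather than mathematics: one must be careful that at each stage of the covering argument the Ramsey property is applied to the correct family — when proving $(1)\Rightarrow(3)$ via contradiction we split a set lying in $\F_{Poi_d}$ and use that $\F_{Poi_d}$ has the Ramsey property (Proposition \ref{ramseyp}), and symmetrically for $\F_{Bir_d}$; while for $(1)\Rightarrow(2)$ we split a set in $\F_{d,0}^*$ and use that $\F_{d,0}^*$ has the Ramsey property. In every case Theorem \ref{rpd} supplies the bridge between "$N(x,U)$ lies in the relevant family for all neighborhoods $U$ of $y$" and "$(x,y)\in\RP^{[d]}$", and the definition of $d$-step AA point ($\RP^{[d]}[x]=\{x\}$) closes the loop. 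The case $d=\infty$ is included since Theorem \ref{rpd} and the Ramsey properties (with $\F_{fip}$, $\F_{\infty,0}$) hold there as well, so no separate argument is needed.
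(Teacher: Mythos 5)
Your proposal is correct and follows essentially the same route as the paper: the paper also proves $(1)\Leftrightarrow(2)$ by combining Theorem \ref{rpd} with the shrinking-ball argument from Theorem \ref{aainfty} (splitting a set in $\F_{d,0}^*$ and using its Ramsey property), and disposes of $(3)$ and $(4)$ "similarly" by splitting sets in $\F_{Poi_d}$ and $\F_{Bir_d}$ instead. Your explicit bookkeeping of which family must carry the Ramsey property in each implication is exactly the point the paper leaves implicit, and it is handled correctly.
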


\begin{proof}
Roughly speaking the theorem follows from Theorem \ref{rpd}, the
fact $\F_{d,0}^*, \F_{Poi_d}$ and $\F_{Bir_d}$ have the Ramsey
property, and the idea of the proof of Theorem \ref{aainfty}. We
show that $(1)\Leftrightarrow(2)$, and the rest is similar.

$(1)\Rightarrow (2)$: Let $x$ be a $d$-step AA point. If (2) does
not hold, then there is some neighborhood $V$ of $x$ such that
$N(x,V)\not \in \F_{d,0}$. Then $N(x,V^c)=\Z\setminus N(x,V)\in
\F^*_{d,0}$. Since $\F^*_{d,0}$ has the Ramsey property, similar to
the proof of Theorem \ref{aainfty} one can find some $y\in V^c$ such
that $N(x,U)\in \F^{*}_{d,0}$ for every neighborhood $U$ of $y$. By
Theorem \ref{rpd}, $y\in \RP^{[d]}[x]$. Since $y\neq x$, this
contradicts to the fact $x$ being $d$-step AA.

$(2)\Rightarrow (1)$: If $x$ is not $d$-step AA, then there is some
$y\in \RP^{[d]}[x]$ with $x\neq y$. Let $U_x$ and $U_y$ be
neighborhoods of $x$ and $y$ with $U_x\cap U_y=\emptyset$. By $(2)$
$N(x,U_x)\in \F_{d,0}$. By Theorem \ref{rpd}, $N(x,U_y)\in
\F^*_{d,0}$. Hence $N(x,U_x)\cap N(x,U_y)\neq \emptyset$,  which
contradicts the fact that $U_x\cap U_y=\emptyset$.
\end{proof}

\subsection{Some further questions}
(1) We have defined and studied $d$-recurrence and Poincar\'e
sequence of order $d$; and $d$-topological recurrence and Birkhoff
recurrence set of order $d$. It is not clear the relation between
$\F_{P_d}$ and $\F_{Poi_d}$; and $\F_{B_d}$ and $\F_{Bir_d}$. Also
it will be very interesting if one can show that $\F_{B_d}\subset
\F_{d,0}^*$ which implies that $x$ is $d$-step AA if and only if
$N(x,V)\in \F_{B_d}^*$ for each neighborhood $V$ of $x$ by Theorem~
\ref{AAgeneral}.

\medskip

(2) In \cite{V65}  Veech proved that for a minimal t.d.s. $(X, T)$,
a point $x\in X$ is almost automorphic if and only if from any
sequence $\{n_i'\}\subseteq \Z$ one may extract a subsequence
$\{n_i\}$ such that $\lim_{i\to \infty} T^{n_i}x=y$ for some $y\in
X$ and $\lim_{i\to \infty} T^{-n_i}y=x.$ We do not know if there is
a similar characterization for $d$-step almost automorphic points
for $d\ge 2$.

\appendix

\section{The Ramsey properties} \label{appendix:Ramsey}

Recall that a family $\F$ has the Ramsey property means that if
$A\in\F$ and $A=\cup_{i=1}^n A_i$ then one of $A_i$ is still in
$\F$. In this section, we show that $\F_{SG_2}$ does not have the
Ramsey property.

\begin{thm}
$\F_{SG_2}$ does not have the Ramsey property.
\end{thm}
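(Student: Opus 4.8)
The plan is to produce an explicit counterexample: a sufficiently rapidly increasing sequence $P=(p_1,p_2,\dots)$ of positive integers together with a partition $SG_2(P)=A_1\cup A_2$ for which neither $A_1$ nor $A_2$ belongs to $\F_{SG_2}$. Since $SG_2(P)\in\F_{SG_2}$, and since $A_i\in\F_{SG_2}$ says precisely that $A_i$ contains $SG_2(Q_i)$ for some infinite sequence $Q_i$, such a partition is exactly a witness that the Ramsey property fails. I would choose $P$ to grow fast enough that each $m\in SG_2(P)$ has a unique expansion $m=\sum_i\varepsilon_i(m)p_i$ with $\varepsilon(m)\in\{0,1\}^{(\N)}$ satisfying the gap-$<2$ condition, and --- more importantly --- that a sum of finitely many elements of $SG_2(P)$ can itself lie in $SG_2(P)$ only when their $P$-expansions have pairwise disjoint supports (a coefficient $\ge2$ cannot be absorbed by the growth of $P$). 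Writing $R(m)=\{i:\varepsilon_i(m)=1\}$, I call $m$ \emph{solid} if $R(m)$ is an interval (equivalently $m\in SG_1(P)$) and \emph{broken} otherwise, and I set $A_1=SG_1(P)$ and $A_2=SG_2(P)\setminus SG_1(P)$. The first thing to extract is a rigidity statement: if $Q=(q_n)$ is infinite with $SG_2(Q)\subseteq SG_2(P)$, then every $q_n>0$, the blocks $R_n:=R(q_n)$ are pairwise disjoint, and for every valid $Q$-index set $J$ (one whose consecutive elements differ by at most $2$) the union $\bigcup_{j\in J}R_j$ equals $R\bigl(\sum_{j\in J}q_j\bigr)$ and is again a valid $P$-index set, i.e.\ all its gaps have size $\le1$.

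Granting the rigidity statement, $A_1\notin\F_{SG_2}$ is immediate. Suppose $SG_2(Q)\subseteq SG_1(P)=A_1$ for some infinite $Q$. Then $q_1,q_2,q_3$, $q_1+q_2$, $q_2+q_3$ and $q_1+q_3$ are all solid, so $R_1,R_2,R_3$ are pairwise disjoint intervals with $R_1\cup R_2$ and $R_2\cup R_3$ intervals; hence $R_1$ is adjacent to $R_2$ and $R_2$ to $R_3$, forcing the three intervals into linear order with $R_2$ strictly between $R_1$ and $R_3$. But then $R_1\cup R_3$ is not an interval, contradicting the solidity of $q_1+q_3$. So no infinite $Q$ satisfies $SG_2(Q)\subseteq A_1$.

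The remaining half, $A_2\notin\F_{SG_2}$, is the step I expect to be the real obstacle. Here one must show that there is no infinite family of pairwise disjoint blocks $(R_j)_{j\ge1}$ for which, for every valid $J$, the union $\bigcup_{j\in J}R_j$ is simultaneously a valid $P$-index set and a non-interval; an adversarial $Q$ with $SG_2(Q)\subseteq A_2$ would produce exactly such a family. The mechanism to exploit is that the consecutive sums $q_i+\dots+q_n$, together with the skip-one sums $q_i+q_{i+2}$, $q_i+q_{i+2}+q_{i+4},\dots$, force the blocks to pack into consecutive $P$-index slots leaving only single holes, even when a $Q$-index is skipped; this packing is so rigid that, tracking the position of the rightmost hole by induction on the blocks, one should be able to show the hole is eventually either forced to widen --- destroying validity of some $\bigcup_{j\in J}R_j$ --- or squeezed out entirely --- making some $\bigcup_{j\in J}R_j$ an interval, hence a solid element lying outside $A_2$. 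Pushing this combinatorial argument through is the crux; every concrete attempt to build an adversarial $Q$ collapses in exactly one of these two ways. Should the solid/broken split prove unwieldy, a natural fallback is to refine $A_1$ and $A_2$ further by the parity of $|R(m)|$: this at once kills any $Q$ with all $|R(q_n)|$ odd, since then $q_1+q_2$ has even length, and reduces the problem again to the rigidly packed families. Finally, the potentially degenerate shapes of $Q$ (non-positive or bounded terms) never arise, because $SG_2(Q)\subseteq SG_2(P)$ already forces every $q_n$ to be positive.
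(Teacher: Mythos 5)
Your first half is sound: if $SG_2(Q)\subseteq SG_1(P)$ for an infinite $Q$, the supports $R_1,R_2,R_3$ would be three pairwise disjoint, pairwise adjacent index intervals, which is impossible. The fatal problem is the second half. The claim $A_2=SG_2(P)\setminus SG_1(P)\notin \F_{SG_2}$, which you correctly single out as the crux and do not prove, is in fact \emph{false}, so no packing argument can rescue this two-piece partition. Here is an infinite $Q$ with $SG_2(Q)\subseteq A_2$: put $R_1=\{1,3\}$, $R_2=\{4,6\}$, $R_j=\{3j-4,\,3j-2,\,3j\}$ for $j\ge 3$, and $q_j=\sum_{i\in R_j}p_i$. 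The odd-indexed blocks $\{1,3\},\{5,7,9\},\{11,13,15\},\dots$ partition the odd integers into consecutive runs and the even-indexed blocks $\{4,6\},\{8,10,12\},\dots$ partition the even integers $\ge 4$, so the $R_j$ are pairwise disjoint and each is a non-interval with only isolated holes. For a valid index set $J$ (consecutive elements differing by at most $2$) let $S_J=\bigcup_{j\in J}R_j$. Since $\min R_j$ and $\max R_j=3j$ are increasing and any two consecutive integers $m,m+1\ge 3$ lie in blocks $R_a,R_b$ with $|a-b|\le 1$, two adjacent holes of $S_J$ would force two consecutive indices $c,c+1\notin J$ with $J$ meeting both sides of $\{c,c+1\}$, contradicting validity of $J$ (if instead $J$ lies entirely below $c$ or entirely above $c+1$, monotonicity of $\min R_j,\max R_j$ puts one of $m,m+1$ outside the span of $S_J$). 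So $\sum_{j\in J}q_j\in SG_2(P)$. And $S_J$ is never an interval: if $1\in J$ then $2$ (which lies in no block) is an interior hole, while if $1\notin J$ and $j^*=\max J\ge 2$ then $3j^*-1=\min R_{j^*+1}$ is an interior hole. Concretely, $\{4,6\}\cup\{5,7,9\}\cup\{8,10,12\}=\{4,5,\dots,10\}\cup\{12\}$ misses $11$. Hence $A_2\in\F_{SG_2}$. Note also that your implicit finite obstruction already fails at three terms: $R_1=\{1,3\}$, $R_2=\{4,6\}$, $R_3=\{5,7,9\}$ give seven elements of $A_2$ realizing all of $SG_2(q_1,q_2,q_3)$.

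The paper sidesteps this by splitting $SG_2(P)$ into \emph{three} pieces, $B_1=SG_1(\{p_1,p_3,\dots\})$, $B_2=SG_1(\{p_2,p_4,\dots\})$ and the remainder $B_0$, and showing none of them contains even the six numbers $a_1,a_2,a_3,a_1+a_2,a_2+a_3,a_1+a_3$. For $B_1,B_2$ this is exactly your adjacency argument applied to the odd (resp.\ even) indexed subsequence; for $B_0$ the point is that each support must contain a pair of adjacent indices, and the conditions $a_1+a_2,a_2+a_3,a_1+a_3\in B_0$ pin down the relative positions of the three supports until the $\{1,3\}$-sum becomes impossible. If you want to salvage your write-up, you should abandon the solid/broken dichotomy and adopt a partition of this three-piece type.
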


\begin{proof}
Let $P=\{p_1, p_2, \ldots \}$ be a subsequence of $\N$ with
$p_{i+1}>2(p_1+\ldots +p_i)$. The assumption that
$p_{i+1}>2(p_1+\ldots +p_i)$ ensures that each element of $SG_2(P)$
has a unique expression with the form of $\sum_i p_{j_i}$.

Now divide the set $SG_2(P)$ into the following three sets:
\begin{equation*}
\begin{split}
B_1&=\{p_{2n-1}+\ldots+p_{2m-1}:n\le m\in\N\}=SG_1(\{p_1,p_3,\ldots\}),\\
B_2&=\{p_{2n}+\ldots+p_{2m}:n\le m\in\N\}=SG_1(\{p_2,p_4,\ldots\}),\\
B_0&=SG_2(P)\setminus (B_1\cup B_2).
\end{split}
\end{equation*}

We show that $B_i\not\in \F_{SG_2}$ for $i=0,1,2.$  In fact, we will
prove that for each $i=0,1,2$ there do not exist $a_1<a_2<a_3$ such
that
\begin{equation}
    a_1,a_2,a_3,a_1+a_2,a_2+a_3,a_1+a_3\subseteq B_i, \tag{$*$}
\end{equation}
which obviously implies that $B_i\not\in \F_{SG_2}$ for $i=0,1,2.$
\medskip

\noindent (1). First we show $B_2\not\in \F_{SG_2}$. The proof
$B_1\not\in \F_{SG_2}$ follows similarly. Assume the contrary, i.e.
there exist $a_1<a_2<a_3$ such that
\begin{equation*}
    a_1,a_2,a_3,a_1+a_2,a_2+a_3,a_1+a_3\subseteq B_2.
\end{equation*}
Let
\begin{equation*}
\begin{split}
a_1&=p_{2n_1}+\ldots+p_{2m_1},\ n_1\le m_1;\\
a_2&=p_{2n_2}+\ldots+p_{2m_2},\ n_2\le m_2;\\
a_3&=p_{2n_3}+\ldots+p_{2m_3},\ n_3\le m_3.
\end{split}
\end{equation*}
Since $a_1<a_2<a_3$ and the assumption that $p_{i+1}>2(p_1+\ldots
+p_i)$, one has that $m_1\le m_2\le m_3$. Since $a_1+a_2, a_2+a_3\in
B_2$, one has that $n_2=m_1+1$ and $n_3=m_2+1$. Hence $n_3=m_2+1\ge
n_2+1=m_1+2$, i.e. $n_3>m_1+1$. Thus $$a_1+a_3\not \in B_2,$$ a
contraction!

\medskip

\noindent (2). Now we show $B_0\not\in \F_{SG_2}$. Assume the
contrary, i.e. there exist $a_1<a_2<a_3$ such that
\begin{equation*}
    a_1,a_2,a_3,a_1+a_2,a_2+a_3,a_1+a_3\subseteq B_0.
\end{equation*}
Let
\begin{equation*}
\begin{split}
a_1&=p_{i^1_1}+p_{i^1_2}+\ldots+p_{i^1_{k_1}};\\
a_2&=p_{i^2_1}+p_{i^2_2}+\ldots+p_{i^2_{k_2}};\\
a_3&=p_{i^3_1}+p_{i^3_2}+\ldots+p_{i^3_{k_3}},
\end{split}
\end{equation*}
where $i^r_1<i^r_2<\ldots <i^r_{k_r}$, $i^r_{j+1}\le i^r_j+2$ for
$1\le j\le k_r-1$, and there are both even and odd numbers in
$\{i^r_1,i^r_2,\ldots,i^r_{k_r}\}$ $(r=1,2,3)$.

Since there are both even and odd numbers in
$\{i^r_1,i^r_2,\ldots,i^r_{k_r}\}$ $(r=1,2,3)$ and $i^r_{j+1}\le
i^r_j+2$ for $1\le j \le k_r-1$, there exist $1\le j_r\le k_r-1$
such that $i^r_{j+1}=i^r_{j_r}+1$. Since $a_1<a_2<a_3$ and the
assumption that $p_{i+1}>2(p_1+\ldots +p_i)$, one has that
$i^1_{k_1}\le i^2_{k_2}\le i^3_{k_3}$. Note that we have
\begin{equation*}
    i^1_1<i^1_2<\ldots <i^1_{j_1}<i^1_{j_1+1}=i^1_{j_1}+1<\ldots<i^1_{k_1},
\end{equation*}
\begin{equation*}
    i^2_1<i^2_2<\ldots <i^2_{j_2}<i^2_{j_2+1}=i^2_{j_2}+1<\ldots<i^2_{k_2},
\end{equation*}
\begin{equation*}
    i^3_1<i^3_2<\ldots <i^3_{j_3}<i^3_{j_3+1}=i^3_{j_3}+1<\ldots<i^3_{k_3}.
\end{equation*}

The condition $a_1+a_2\in B_0$ implies that
\begin{equation}
    i^1_{j_1+1}<i^2_1\le i^1_{k_1}+2; \ i^1_{k_1}<i^2_{j_2}. \tag{a}
\end{equation}
In fact if $i^2_1<i^1_{j_1}$, then the gap $\{i^1_{j_1},
i^1_{j_1}+1\}$ is missing in the term of $a_2$ and it contradicts
the assumption $a_2\in \F_{SG_2}$. The statement
$i^1_{k_1}<i^2_{j_2}$ follows by the same argument.

Similarly, using the assumptions $a_2+a_3\in B_0$ and $a_1+a_3\in
B_0$, one has
\begin{equation}
    i^2_{j_2+1}<i^3_1\le i^2_{k_2}+2; \ i^2_{k_2}<i^3_{j_3}. \tag{b}
\end{equation}
and
\begin{equation}
    i^1_{j_1+1}<i^3_1\le i^1_{k_1}+2; \ i^1_{k_1}<i^3_{j_3}. \tag{c}
\end{equation}
From (a), we have that $i_{k_1}^1<i^2_{j_2}$; and from (b), we have
$i^2_{j_2+1}=i^2_{j_2}+1< i^3_1$. Hence we have $i^3_1\ge
i^1_{k_1}+3$, which contradicts (c). The proof is completed.
\end{proof}

\section{Compact Hausdorff Systems}\label{appendix:CT2}

In this section we discuss compact Hausforff systems, i.e. the
systems with phase space being compact Hausdorff. The reason for
this is not generalization for generalization's sake, but rather
that we have to deal with non-metrizable systems. For example, we
will use (in the proof of Theorem \ref{huang10}) an important tool
named Ellis semigroup which is a subspace of an uncountable product
of copies of the phase space and therefore in general not
metrizable.

\subsection{Compact Hausdorff systems}

In the classical theory of abstract topological dynamics, the basic
assumption about the system is that the space is a compact Hausdorff
space and the action group is a topological group. In this paper, we
mainly consider the compact metrizable system under $\Z$-actions,
but in some occasions we have to deal with compact Hausdorff spaces
which are non-metrizable. Note that each compact Hausdorff space is
a uniform space, and one may use the uniform structure replacing the
role of a metric, see for example the Appendix of \cite{Au88}.

First we recall a classical equality concerning regionally proximal
relation in compact Hausdorff systems.
A {\em compact Hausdorff system} is a pair $(X,T)$, where $X$ is a
compact Hausdorff space and $T:X\rightarrow X$ is a homeomorphism.
Let $(X,T)$ be a compact Hausdorff system and $\mathcal{U}_X$ be the
unique uniform structure of $X$. 
The {\em regionally proximal relation} on $X$ is defined
by
\begin{equation*}
    \RP=\bigcap_{\a \in \mathcal{U}_X}\overline{ \bigcup_{n\in \Z}(T\times
    T)^{-n}\a}
\end{equation*}

\subsection{Ellis semigroup}

A beautiful characterization of distality was given by R. Ellis
using so-called enveloping semigroup. Given a compact Hausdorff
system $(X,T)$, its {\it enveloping semigroup} (or {\em Ellis
semigroup}) $E(X,T)$ is defined as the closure of the set $\{T^n: n
\in \Z\}$ in $X^X$ (with its compact, usually non-metrizable,
pointwise convergence topology). Ellis showed that a compact
Hausdorff system $(X,T)$ is distal if and only if $E(X,T)$ is a
group if and only if every point in $(X^2, T\times T)$ is minimal
\cite{E69}.

\subsection{Limits of Inverse systems}

Suppose that  every $\ll$ in a set $\Lambda$ directed by the
relation $\le$ corresponds a t.d.s. $(X_\ll,T_\ll)$, and that for
any $\ll,\xi\in \Lambda$ satisfying $\xi\le \ll$ a factor map
$\pi^\ll_\xi: (X_\ll,T_\ll)\rightarrow (X_\xi,T_\xi)$ is defined;
suppose further that $\pi^\xi_\tau\pi^\ll_\xi=\pi^\ll_\tau$ for all
$\ll, \xi,\tau\in \Lambda$ with $\tau\le \xi\le \ll$ and that
$\pi^\ll_\ll={\rm id}_X$ for all $\ll\in \Lambda$. In this situation
we say that the family $\{X_\ll,
\pi^\ll_\xi,\Lambda\}=\{(X_\ll,T_\ll), \pi^\ll_\xi,\Lambda\}$ is an
{\em inverse system of the systems $(X_\ll,T_\ll)$}; and the
mappings $\pi^\ll_\xi$ are called {\em bonding mappings} of the
inverse system.

Let $\{X_\ll, \pi^\ll_\xi,\Lambda\}$ be an inverse system. The {\em
limit of the inverse system  $\{X_\ll, \pi^\ll_\xi,\Lambda\}$} is
the set
\begin{equation*}
    \Big\{(x_\ll)_\ll\in \prod_{\ll\in \Lambda}X_\ll: \pi^\ll_\xi(x_\ll)=
    x_\xi\ \text{for all $\xi\le \ll\in \Lambda$}\Big\},
\end{equation*}
and is denoted by $\varprojlim \{X_\ll, \pi^\ll_\xi,\Lambda\}$. Let
$X=\varprojlim \{X_\ll, \pi^\ll_\xi,\Lambda\}$. For each $\ll\in
\Lambda $, let $\pi_\ll: X\rightarrow X_\ll,
(x_\sigma)_\sigma\mapsto x_\ll$ be the projection mapping.

\medskip

A well known result is the following (see for example \cite{Ke}):

\begin{lem}\label{inverse}
Each compact Hausdorff system is the inverse limit of topological
dynamical systems.
\end{lem}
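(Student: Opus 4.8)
The plan is to realize $(X,T)$ as the inverse limit of its metrizable factors, the point being that a compact Hausdorff space has ``enough'' real-valued continuous functions to separate points. First I would let $\Lambda$ be the set of all closed equivalence relations $R$ on $X$ with $(T\times T)(R)=R$ and $X/R$ metrizable, partially ordered by $R\le S$ iff $S\subseteq R$; for such $R$ the quotient $X/R$ is a compact metric space on which $T$ descends to a homeomorphism $T_R$, so $(X/R,T_R)$ is a topological dynamical system (a compact metric space together with a homeomorphism), and for $R\le S$ the canonical surjection $\pi^S_R\colon X/S\to X/R$, $[x]_S\mapsto[x]_R$, is a factor map with $\pi^S_R\pi^U_S=\pi^U_R$. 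To see that $\Lambda$ is directed, note that for $R,S\in\Lambda$ the relation $R\cap S$ is again a closed relation with $(T\times T)(R\cap S)=R\cap S$, and the obvious injection $X/(R\cap S)\hookrightarrow X/R\times X/S$ shows $X/(R\cap S)$ is metrizable; since $R\cap S$ refines both $R$ and $S$, $\Lambda$ is directed. Thus $\{(X/R,T_R),\pi^S_R,\Lambda\}$ is an inverse system of topological dynamical systems, and I let $Y=\varprojlim\{(X/R,T_R),\pi^S_R,\Lambda\}$ with the induced transformation $T_Y$, together with the canonical map $\phi\colon X\to Y$, $x\mapsto([x]_R)_{R\in\Lambda}$, which is continuous and satisfies $\phi\circ T=T_Y\circ\phi$. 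It then remains to prove that $\phi$ is a bijection.

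For injectivity it suffices to show $\bigcap_{R\in\Lambda}R=\Delta_X$. Given $x\ne y$ in $X$, compactness and Hausdorffness give normality, so by Urysohn's lemma there is $f\in C(X,[0,1])$ with $f(x)\ne f(y)$. Form the orbit map $\Phi\colon X\to[0,1]^{\Z}$, $z\mapsto(f(T^nz))_{n\in\Z}$, which is continuous into the metrizable space $[0,1]^{\Z}$ and satisfies $\Phi\circ T=\sigma\circ\Phi$ for the shift $\sigma$; hence $R_f:=\{(z,w):\Phi(z)=\Phi(w)\}$ is a closed relation, $(T\times T)(R_f)=R_f$ (using invertibility of $T$ and the intertwining), and $X/R_f$ is homeomorphic to the compact metric set $\Phi(X)$, so $R_f\in\Lambda$. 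Since $\Phi(x)\ne\Phi(y)$ we get $(x,y)\notin R_f$, as desired.

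For surjectivity, take $(\bar z_R)_{R\in\Lambda}\in Y$, so $\pi^S_R(\bar z_S)=\bar z_R$ whenever $R\le S$, and set $K_R=q_R^{-1}(\bar z_R)$, where $q_R\colon X\to X/R$ is the quotient map; each $K_R$ is a nonempty closed subset of $X$. I claim $\{K_R\}_{R\in\Lambda}$ has the finite intersection property: given $R_1,\dots,R_n$, put $S=R_1\cap\cdots\cap R_n\in\Lambda$; any $x\in K_S$ satisfies $q_{R_i}(x)=\pi^S_{R_i}(q_S(x))=\pi^S_{R_i}(\bar z_S)=\bar z_{R_i}$, so $x\in\bigcap_{i=1}^nK_{R_i}$. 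By compactness of $X$ there is $x\in\bigcap_{R\in\Lambda}K_R$, and then $\phi(x)=(\bar z_R)_R$. Hence $\phi$ is a continuous bijection from a compact space onto a Hausdorff space, so a homeomorphism, and being $T$-equivariant it is a conjugacy; this exhibits $(X,T)$ as $\varprojlim(X/R,T_R)$, an inverse limit of (metrizable) topological dynamical systems.

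The step I expect to be the real content — the only place the compact Hausdorff hypothesis is used essentially — is the production, for each pair $x\ne y$, of a metrizable factor separating them, i.e. the Urysohn-function-plus-orbit-map construction of $R_f$ in the injectivity step; once one knows there are enough such factors, the directedness of $\Lambda$ and the finite-intersection argument for surjectivity are purely formal.
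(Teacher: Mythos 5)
Your proof is correct. The paper does not actually prove this lemma---it is stated as a well-known fact with a reference to Keynes---so there is no in-paper argument to compare against; your construction (the directed system of all closed invariant equivalence relations with metrizable quotient, separation of points via a Urysohn function and its orbit map into $[0,1]^{\Z}$, and the finite-intersection argument for surjectivity) is precisely the standard proof, and all the steps check out: the intersection $R\cap S$ argument correctly yields directedness, $R_f$ is indeed closed, invariant and separates $x$ from $y$, and the continuous equivariant bijection from a compact space to a Hausdorff space is a conjugacy. You have correctly identified that the only non-formal input is the existence of enough metrizable factors, which is where compactness and the Hausdorff property enter through Urysohn's lemma.
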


\subsection{The regionally proximal relation of order $d$ for
compact Hausdorff systems}

The definition of the regionally proximal relation of order $d$ for
compact Hausdorff systems is similar to the metric case.

\begin{de}
Let $(X, T)$ be a compact Hausdorff system, $\mathcal{U}_X$ be the
unique uniform structure of $X$ and let $d\ge 1$ be an integer. A
pair $(x, y) \in X\times X$ is said to be {\em regionally proximal
of order $d$} if for any $\a  \in \mathcal{U}_X$, there exist $x',
y'\in X$ and a vector ${\bf n} = (n_1,\ldots , n_d)\in\Z^d$ such
that $(x, x') \in  \a, (y, y') \in \a$, and $$ (T^{{\bf n}\cdot
\ep}x', T^{{\bf n}\cdot \ep}y') \in \a \ \text{for any $\ep\in
\{0,1\}^d$, $\ep\not=(0,\ldots,0)$},
$$ where ${\bf n}\cdot \ep = \sum_{i=1}^d \ep_in_i$. The set of all
regionally proximal pairs of order $d$ is denoted by $\RP^{[d]}(X)$,
which is called {\em the regionally proximal relation of order $d$}.
\end{de}

By Lemma \ref{inverse}, each compact Hausdorff system is the inverse
limit of topological dynamical systems. Recall the definition of the
product uniformity. Let $(X_\ll, \U_\ll)_{\ll\in \Lambda}$ be a
family of uniform spaces and let $Z=\prod_{\ll\in \Lambda}X_\ll$.
The uniformity on $Z$ (the product uniformity) is defined as
follows. If $F=\{\ll_1,\ldots,\ll_m\}$ is a finite subset of the
index set $\Lambda$ and $\a_{\ll_j}\in \U_{\ll_j}$ $(j=1,\ldots,m)$,
let
$$\Phi_{\a_{\ll_1},\ldots,\a_{\ll_m}}=\{(x,y)\in Z\times Z:
(x_{\ll_j},y_{\ll_j})\in \a_{\ll_j} ,\ j=1,\ldots, m\}.$$ The
collection of all such sets $\Phi_{\a_{\ll_1},\ldots,\a_{\ll_m}}$
for all finite subsets $F$ of $\Lambda$ is a base for the product
uniformity. From this and the definition of the regionally proximal
relation of order $d$, one has the following result.

\begin{prop}
Let $(X,T)$ be a compact Hausdorff system and $d\in \N$. Suppose
that $X=\varprojlim \{X_\ll, \pi^\ll_\xi,\Lambda\}$, where $(X_\ll,
T_\ll)_{\ll\in \Lambda}$ are t.d.s.. Then
$$\RP^{[d]}(X)=\varprojlim
\{\RP^{[d]}(X_\ll), \pi^\ll_\xi\times \pi^\ll_\xi,\Lambda\}.$$
\end{prop}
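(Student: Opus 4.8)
The plan is to establish the two inclusions separately. The key preliminary observations are: (i) $X$ is a closed subspace of $Z=\prod_{\ll\in\Lambda}X_\ll$ and, since a compact Hausdorff space carries a unique uniformity, $\mathcal{U}_X$ is the subspace uniformity, so the sets $\Phi_{\a_{\ll_1},\ldots,\a_{\ll_m}}\cap(X\times X)$ (with $\{\ll_1,\ldots,\ll_m\}\subset\Lambda$ finite and $\a_{\ll_j}\in\mathcal{U}_{X_{\ll_j}}$) form a base of $\mathcal{U}_X$; (ii) each projection $\pi_\ll:X\to X_\ll$ is a factor map, hence surjective, equivariant ($\pi_\ll\circ T=T_\ll\circ\pi_\ll$) and, being continuous from a compact space to a Hausdorff space, uniformly continuous --- and likewise each bonding map $\pi^\ll_\xi$ is uniformly continuous; (iii) $X\times X=\varprojlim\{X_\ll\times X_\ll,\pi^\ll_\xi\times\pi^\ll_\xi,\Lambda\}$, so an element of $\varprojlim\{\RP^{[d]}(X_\ll),\pi^\ll_\xi\times\pi^\ll_\xi,\Lambda\}$ is exactly a pair $(x,y)\in X\times X$ with $(\pi_\ll x,\pi_\ll y)\in\RP^{[d]}(X_\ll)$ for every $\ll$. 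Thus the proposition reduces to showing that $(x,y)\in\RP^{[d]}(X)$ if and only if $(\pi_\ll x,\pi_\ll y)\in\RP^{[d]}(X_\ll)$ for all $\ll$.

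For the forward direction I would fix $(x,y)\in\RP^{[d]}(X)$, an index $\ll$, and an entourage $\a_\ll\in\mathcal{U}_{X_\ll}$. By uniform continuity of $\pi_\ll$ there is $\b\in\mathcal{U}_X$ with $(\pi_\ll\times\pi_\ll)(\b)\subseteq\a_\ll$. Applying the definition of $\RP^{[d]}(X)$ with the entourage $\b$ yields $x',y'\in X$ and ${\bf n}\in\Z^d$ with $(x,x'),(y,y')\in\b$ and $(T^{{\bf n}\cdot\ep}x',T^{{\bf n}\cdot\ep}y')\in\b$ for all $\ep\in\{0,1\}^d$ with $\ep\neq{\bf 0}$. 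Pushing these relations through $\pi_\ll$ and using equivariance ($\pi_\ll T^{{\bf n}\cdot\ep}=T_\ll^{{\bf n}\cdot\ep}\pi_\ll$) shows that $\pi_\ll x',\pi_\ll y'$ and ${\bf n}$ witness $(\pi_\ll x,\pi_\ll y)\in\RP^{[d]}(X_\ll)$ relative to $\a_\ll$; since $\a_\ll$ was arbitrary, $(\pi_\ll x,\pi_\ll y)\in\RP^{[d]}(X_\ll)$.

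For the reverse direction I would take $(x,y)\in X\times X$ with $(\pi_\ll x,\pi_\ll y)\in\RP^{[d]}(X_\ll)$ for all $\ll$, fix $\a\in\mathcal{U}_X$, and choose a basic entourage $\Phi_{\a_{\ll_1},\ldots,\a_{\ll_m}}\cap(X\times X)\subseteq\a$. Since $\Lambda$ is directed, pick $\ll_0\in\Lambda$ with $\ll_0\ge\ll_j$ for $1\le j\le m$. Using uniform continuity of the bonding maps $\pi^{\ll_0}_{\ll_j}$, choose $\gamma_j\in\mathcal{U}_{X_{\ll_0}}$ with $(\pi^{\ll_0}_{\ll_j}\times\pi^{\ll_0}_{\ll_j})(\gamma_j)\subseteq\a_{\ll_j}$, and set $\gamma=\bigcap_{j=1}^m\gamma_j\in\mathcal{U}_{X_{\ll_0}}$. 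Now apply the definition of $\RP^{[d]}(X_{\ll_0})$ to the pair $(\pi_{\ll_0}x,\pi_{\ll_0}y)$ and the entourage $\gamma$, obtaining $u,v\in X_{\ll_0}$ and ${\bf n}\in\Z^d$ with $(\pi_{\ll_0}x,u),(\pi_{\ll_0}y,v)\in\gamma$ and $(T_{\ll_0}^{{\bf n}\cdot\ep}u,T_{\ll_0}^{{\bf n}\cdot\ep}v)\in\gamma$ for all $\ep\neq{\bf 0}$; then use surjectivity of $\pi_{\ll_0}$ to pick $x',y'\in X$ with $\pi_{\ll_0}x'=u$, $\pi_{\ll_0}y'=v$. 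A short computation with $\pi_{\ll_j}=\pi^{\ll_0}_{\ll_j}\circ\pi_{\ll_0}$ and the equivariance of all the maps then shows that $(x,x')$, $(y,y')$ and every $(T^{{\bf n}\cdot\ep}x',T^{{\bf n}\cdot\ep}y')$ ($\ep\neq{\bf 0}$) belong to $\Phi_{\a_{\ll_1},\ldots,\a_{\ll_m}}\cap(X\times X)\subseteq\a$, so $x',y',{\bf n}$ witness $(x,y)\in\RP^{[d]}(X)$ relative to $\a$. As $\a$ was arbitrary, $(x,y)\in\RP^{[d]}(X)$.

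The only genuinely delicate step is the reverse direction: the definition of $\RP^{[d]}(X)$ requires one vector ${\bf n}$ and one pair $(x',y')$ simultaneously controlling all the finitely many coordinates that appear in the chosen entourage $\a$, whereas each hypothesis $(\pi_\ll x,\pi_\ll y)\in\RP^{[d]}(X_\ll)$ only controls a single coordinate. Directedness of $\Lambda$ is exactly what lets one replace these finitely many indices by a single dominating $\ll_0$, and surjectivity of $\pi_{\ll_0}$ is what lets the witnesses produced in $X_{\ll_0}$ be lifted to $X$; after that the verification is a routine chase through the uniform structures. It is also worth making explicit the use of uniqueness of the uniformity on a compact Hausdorff space, which is what identifies $\mathcal{U}_X$ with the subspace uniformity and hence makes the $\Phi$-base legitimate.
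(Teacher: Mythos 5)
Your proof is correct and follows exactly the route the paper intends: the paper presents this proposition as an immediate consequence of the description of the base $\Phi_{\a_{\ll_1},\ldots,\a_{\ll_m}}$ of the product uniformity together with the definition of $\RP^{[d]}$, and your argument is precisely the detailed verification of that claim, with directedness of $\Lambda$ and surjectivity of $\pi_{\ll_0}$ handling the only nontrivial inclusion. Nothing is missing.
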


Thus combining this proposition with Theorem \ref{ShaoYe}, one has

\begin{thm}
Let $(X, T)$ be a minimal compact Hausdorff system and $d\in \N$.
Then
\begin{enumerate}

\item $\RP^{[d]}(X)$ is an equivalence relation, and so is
$\RP^{[\infty]}.$

\item If $\pi:(X,T)\lra (Y,S)$ is a factor map, then $(\pi\times
\pi)(\RP^{[d]}(X))=\RP^{[d]}(Y).$

\item $(X/\RP^{[d]},T)$ is the maximal nilfactor of $(X,T)$.
\end{enumerate}
\end{thm}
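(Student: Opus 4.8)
The plan is to deduce all three assertions from the metric case, Theorem~\ref{ShaoYe}, via the inverse limit description $\RP^{[d]}(X)=\varprojlim\{\RP^{[d]}(X_\ll),\pi^\ll_\xi\times\pi^\ll_\xi,\Lambda\}$ proved in the preceding Proposition, together with Lemma~\ref{inverse}. So fix a minimal compact Hausdorff system $(X,T)$ and write $X=\varprojlim\{X_\ll,\pi^\ll_\xi,\Lambda\}$ with each $(X_\ll,T_\ll)$ a metric t.d.s.; these are automatically minimal, being factors of $X$, so Theorem~\ref{ShaoYe} applies to every $X_\ll$. For (1): reflexivity and symmetry of $\RP^{[d]}(X)$ are coordinatewise obvious, and if $(x,y),(y,z)\in\RP^{[d]}(X)$ then $(x_\ll,y_\ll),(y_\ll,z_\ll)\in\RP^{[d]}(X_\ll)$, hence $(x_\ll,z_\ll)\in\RP^{[d]}(X_\ll)$ for every $\ll$ by Theorem~\ref{ShaoYe}(3), so $(x,z)\in\RP^{[d]}(X)$; invariance and closedness pass through the inverse limit as well. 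Then $\RP^{[\infty]}=\bigcap_{d\ge1}\RP^{[d]}$, an intersection of equivalence relations, is an equivalence relation.

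For (2), the inclusion $(\pi\times\pi)\RP^{[d]}(X)\subseteq\RP^{[d]}(Y)$ follows directly from the entourage definition of $\RP^{[d]}$ and the uniform continuity of $\pi$ (automatic since $X$ is compact): given $\b\in\mathcal{U}_Y$ pick $\a\in\mathcal{U}_X$ with $(\pi\times\pi)(\a)\subseteq\b$, and push forward a configuration $(x',y',{\bf n})$ witnessing regional proximality of order $d$. For the reverse inclusion I would realize $\pi$ as a limit of factor maps between metric systems over one directed set: there are inverse systems $\{X_\ll,\pi^\ll_\xi,\Lambda\}$ and $\{Y_\ll,\psi^\ll_\xi,\Lambda\}$ of minimal metric t.d.s. and factor maps $p_\ll\colon X_\ll\to Y_\ll$ with $p_\xi\circ\pi^\ll_\xi=\psi^\ll_\xi\circ p_\ll$ such that $X=\varprojlim X_\ll$, $Y=\varprojlim Y_\ll$, $\pi=\varprojlim p_\ll$. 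By Theorem~\ref{ShaoYe}(4), $(p_\ll\times p_\ll)\RP^{[d]}(X_\ll)=\RP^{[d]}(Y_\ll)$ and $(\pi^\ll_\xi\times\pi^\ll_\xi)\RP^{[d]}(X_\ll)=\RP^{[d]}(X_\xi)$ for all $\xi\le\ll$. Given $(y,y')\in\RP^{[d]}(Y)$, set $L_\ll:=\RP^{[d]}(X_\ll)\cap(p_\ll\times p_\ll)^{-1}(y_\ll,y'_\ll)$; each $L_\ll$ is nonempty and compact, and $\pi^\ll_\xi\times\pi^\ll_\xi$ maps $L_\ll$ into $L_\xi$, so $\varprojlim\{L_\ll,\pi^\ll_\xi\times\pi^\ll_\xi\}$ is nonempty, and any point of it lies in $\RP^{[d]}(X)=\varprojlim\RP^{[d]}(X_\ll)$ and is carried by $\pi\times\pi$ to $(y,y')$.

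For (3), I first check that $X/\RP^{[d]}$ is a system of order $d$: since $\RP^{[d]}(X)$ is the inverse limit of the relations $\RP^{[d]}(X_\ll)$, the quotient $X/\RP^{[d]}$ is the inverse limit of the quotients $X_\ll/\RP^{[d]}(X_\ll)$, each of which is a system of order $d$ by Theorem~\ref{ShaoYe}(5), i.e. an inverse limit of $d$-step nilsystems; an inverse limit of inverse limits of $d$-step nilsystems is again an inverse limit of $d$-step nilsystems. For maximality, let $\phi\colon(X,T)\to(Z,S)$ be a factor map onto a system of order $d$. Writing $Z$ as an inverse limit of $d$-step nilsystems $Z_j$ and using that a $d$-step nilsystem coincides with its own maximal order-$d$ nilfactor, we get $\RP^{[d]}(Z_j)=\Delta_{Z_j}$ for each $j$, hence $\RP^{[d]}(Z)=\varprojlim\Delta_{Z_j}=\Delta_Z$ by the preceding Proposition. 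By part (2), $(\phi\times\phi)\RP^{[d]}(X)=\RP^{[d]}(Z)=\Delta_Z$, so $\phi$ identifies all $\RP^{[d]}(X)$-equivalent points; since $\RP^{[d]}(X)$ is a closed invariant equivalence relation by (1), $\phi$ factors through the quotient map $X\to X/\RP^{[d]}$, and therefore $(Z,S)$ is a factor of $(X/\RP^{[d]},T)$.

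The main obstacle is the inverse-limit bookkeeping underlying (2) and (3): representing a factor map between minimal compact Hausdorff systems as an inverse limit of factor maps between minimal metric systems indexed by a single directed set, verifying that forming quotients commutes with inverse limits, and that an inverse limit of inverse limits of $d$-step nilsystems is again of that form. Each of these is a standard fact about inverse limits of compact Hausdorff spaces---ultimately resting on the nonemptiness of an inverse limit of nonempty compact spaces---but it is where care must be taken; all the genuine dynamical content is supplied by Theorem~\ref{ShaoYe} in the metric setting.
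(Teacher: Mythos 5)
Your proof is correct and follows essentially the same route as the paper: the paper also derives all three statements by combining the inverse-limit description $\RP^{[d]}(X)=\varprojlim\{\RP^{[d]}(X_\ll),\pi^\ll_\xi\times\pi^\ll_\xi,\Lambda\}$ (via Lemma~\ref{inverse}) with the metric-case Theorem~\ref{ShaoYe}. You simply supply the inverse-limit bookkeeping (nonemptiness of inverse limits of compacta, compatibility of quotients with inverse limits) that the paper leaves implicit.
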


Note that for a compact Hausdorff system $(X,T)$ we say that it is a
system of order $d$ for some $d\in\N$ if it is an inverse limit of
basic $d$-step nilsystems.

\section{Intersective}\label{appendix:Intersective}
It is well known that $P$ is a Birkhoff recurrence set iff
$P\cap(F-F)\not=\emptyset$ for each $F\in\F_{s}$. To give a similar
characterization we have
\begin{de}
A subset $P$ is intersective (topologically) of order $d$ if for
each $F\in \F_{s}$ there are $n_1,\ldots,n_d$ with
$FS(\{n_i\}_{i=1}^d)\subset P$ and $a\in F$ with
$a+FS(\{n_i\}_{i=1}^d)\subset F$, i.e. $F\cap\bigcap_{n\in
FS(\{n_i\}_{i=1}^d)} (F-n)\not=\emptyset$.
\end{de}

\begin{thm}A subset $P$ is intersective (topologically) of order $d$ if and
only if it is a Birkhoff recurrence set of order $d$.
\end{thm}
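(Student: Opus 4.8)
The plan is to prove the two implications separately, following closely the pattern already used for Proposition~\ref{birkhoff-equi1} and its cubic analogue Proposition~\ref{birkhoff-equi2}. The statement asserts that a set $P\subseteq\Z$ is topologically intersective of order $d$ (an essentially combinatorial condition phrased via syndetic sets) if and only if it is a Birkhoff recurrence set of order $d$ (a dynamical condition). Since Proposition~\ref{birkhoff-equi2} already identifies ``Birkhoff recurrence set of order $d$'' with the open-set formulation $U\cap\bigcap_{n\in FS(\{n_i\}_{i=1}^d)}T^{-n}U\neq\emptyset$ over minimal systems, it suffices to connect that open-set formulation with the syndetic-set formulation.

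For the direction ``Birkhoff recurrence of order $d$ $\Rightarrow$ intersective of order $d$'', I would start with a syndetic set $F\in\F_s$ and build from it a minimal symbolic system in the standard way: let $x\in\{0,1\}^\Z$ be the indicator of $F$, shifted so that $0\in F$, and pass to a minimal subsystem $(Y,\sigma)$ of $\overline{O(x,\sigma)}$. Let $U=\{y\in Y: y(0)=1\}$, which is a nonempty clopen set (nonempty because $F$ is syndetic, hence $x$ has $1$'s with bounded gaps, so every point of the minimal set has a $1$ somewhere, and after translating we may assume a point with $y(0)=1$ lies in $U$). Applying the open-set formulation of Birkhoff recurrence of order $d$ to $(Y,\sigma)$ and $U$ gives $n_1,\dots,n_d$ with $FS(\{n_i\}_{i=1}^d)\subseteq P$ and a point $y\in U$ with $\sigma^n y\in U$ for all $n\in FS(\{n_i\}_{i=1}^d)$. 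Since $y$ is in the orbit closure of $x$, one approximates $y$ on the finite window $\{0\}\cup FS(\{n_i\}_{i=1}^d)$ by a shift $\sigma^a x$ of $x$, which translates into $a\in F$ and $a+FS(\{n_i\}_{i=1}^d)\subseteq F$, i.e. $F\cap\bigcap_{n\in FS(\{n_i\}_{i=1}^d)}(F-n)\neq\emptyset$. (One should double-check that $FS(\{n_i\}_{i=1}^d)$ remains a subset of $P$ throughout; it does, since those integers are fixed once chosen.)

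For the converse ``intersective of order $d$ $\Rightarrow$ Birkhoff recurrence of order $d$'', I would take a minimal t.d.s. $(X,T)$ and a nonempty open $U\subseteq X$, fix $x_0\in U$, shrink $U$ so that $\overline{U}\subseteq U'$ for a slightly larger open set, and form the syndetic return-time set $F=N(x_0,U)=\{n\in\Z: T^nx_0\in U\}$, which is syndetic precisely because $(X,T)$ is minimal. Applying the intersectivity hypothesis to this $F$ yields $n_1,\dots,n_d$ with $FS(\{n_i\}_{i=1}^d)\subseteq P$ and $a\in F$ with $a+FS(\{n_i\}_{i=1}^d)\subseteq F$; unwinding the definition of $F$, the point $T^a x_0\in U$ satisfies $T^n(T^ax_0)\in U$ for every $n\in FS(\{n_i\}_{i=1}^d)$, so $U\cap\bigcap_{n\in FS(\{n_i\}_{i=1}^d)}T^{-n}U\neq\emptyset$. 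By Proposition~\ref{birkhoff-equi2} this is exactly the statement that $P$ is a Birkhoff recurrence set of order $d$.

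The main obstacle is purely bookkeeping: making sure the combinatorial configuration $FS(\{n_i\}_{i=1}^d)$ is produced \emph{inside} $P$ and \emph{simultaneously} as a translate inside the syndetic set (resp. the return-time set), rather than obtaining the two pieces of information for possibly different tuples. In both directions this is handled because the hypothesis being invoked already delivers a single tuple $(n_1,\dots,n_d)$ satisfying both requirements at once --- this is built into the definitions of ``intersective of order $d$'' and ``Birkhoff recurrence set of order $d$'' --- so no diagonalization is needed. A secondary point requiring a little care is the passage from the orbit-closure point $y$ back to a genuine shift of the original syndetic set in the first direction; this is the usual clopen-approximation argument for subshifts and causes no trouble since the relevant set $U$ is clopen and the window is finite.
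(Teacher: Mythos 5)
Your proposal is correct and follows essentially the same route as the paper: both directions use the identical correspondence, namely the minimal subshift generated by the indicator of the syndetic set with the cylinder $[1]$ in one direction, and the syndetic return-time set $N(x_0,U)$ of a minimal system in the other. The only difference is that you spell out the clopen-window approximation step that the paper leaves implicit; the unnecessary shrinking of $U$ in the converse direction is harmless.
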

\begin{proof} Assume that $P$ is a Birkhoff recurrence set of order $d$.
Let $F\in\F_s$. Then $1_F\in \{0,1\}^{\Z_+}$. Let $(X,T)$ be a
minimal subsystem of $(\overline{orb(1_F,T)},T)$, where $T$ is the
shift. Since $F\in \F_s$, $[1]$ is a non-empty open subset of $X$.
By the definition there are $n_1,\ldots,n_d$ with
$FS(\{n_i\}_{i=1}^d)\subset P$ such that $[1] \cap
\big(\bigcap_{n\in FS(\{n_i\}_{i=1}^d)}T^{-n}[1] \big)\neq
\emptyset.$ It implies that there is $a\in F$ with
$a+FS(\{n_i\}_{i=1}^d)\subset F$ and hence $P$ is intersective
(topologically) of order $d$.

\medskip
Assume that $P$ is intersective (topologically) of order $d$. Let
$(X,T)$ be a minimal t.d.s. and $U$ be an open non-empty subsets.
Take $x\in U$, then $F=N(x,U)\in \F_s$. Thus there are
$n_1,\ldots,n_d$ with $FS(\{n_i\}_{i=1}^d)\subset P$ and $a\in F$
with $a+FS(\{n_i\}_{i=1}^d)\subset F$. It follows that $U \cap
\big(\bigcap_{n\in FS(\{n_i\}_{i=1}^d)}T^{-n}U \big)\neq \emptyset.$
\end{proof}

It is well known that $P$ is a Poincar\'e sequence if and only if
$P\cap(F-F)\not=\emptyset$ for each $F\in\F_{pubd}$. To give a
similar characterization we have

\begin{de} A subset $P$ is intersective of order $d$ if for each $F\in
\F_{pubd}$ there are $n_1,\ldots,n_d$ with
$FS(\{n_i\}_{i=1}^d)\subset P$ and $a\in F$ with
$a+FS(\{n_i\}_{i=1}^d)\subset F$.
\end{de}

\begin{thm}
A subset is intersective of order $d$ if and only if it is a
Poincar\'e sequence of order $d$.
\end{thm}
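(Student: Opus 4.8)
The plan is to mirror the proof of the preceding theorem for Birkhoff recurrence sets of order $d$, with the Furstenberg correspondence principle playing the role that the minimal subshift on $\{0,1\}^{\Z_+}$ played there. Throughout write $\mathcal{P}_d=\{{\bf n}=(n_1,\dots,n_d)\in\Z^d: FS(\{n_i\}_{i=1}^d)\subset P\}$, which is a countable set, and note that every ${\bf n}\in\mathcal{P}_d$ automatically has all its coordinates in $P$, so these are exactly the candidate tuples for both notions. There are two implications to prove.

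For ``Poincar\'e sequence of order $d$ $\Rightarrow$ intersective of order $d$'', let $F\in\F_{pubd}$. By the Furstenberg correspondence principle there are a measure preserving system $(X,\mathcal{B},\mu,T)$ and $A\in\mathcal{B}$ with $\mu(A)>0$ such that $BD^*\big(\bigcap_{k\in K}(F-k)\big)\ge \mu\big(\bigcap_{k\in K}T^{-k}A\big)$ for every finite $K\subset\Z$. Applying the definition of a Poincar\'e sequence of order $d$ to $(X,\mathcal{B},\mu,T)$ and $A$ yields ${\bf n}\in\mathcal{P}_d$ with $\mu\big(A\cap\bigcap_{n\in FS({\bf n})}T^{-n}A\big)>0$; taking $K=\{0\}\cup FS({\bf n})$ then gives $BD^*\big(F\cap\bigcap_{n\in FS({\bf n})}(F-n)\big)>0$, so this set is nonempty, and any $a$ in it satisfies $a\in F$ and $a+FS({\bf n})\subset F$. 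Hence $P$ is intersective of order $d$.

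For the converse I would argue by contradiction. Suppose $P$ is intersective of order $d$ but not a Poincar\'e sequence of order $d$: there are $(X,\mathcal{B},\mu,T)$ and $A\in\mathcal{B}$ with $\mu(A)>0$ such that $\mu(A_{\bf n})=0$ for every ${\bf n}\in\mathcal{P}_d$, where $A_{\bf n}=A\cap\bigcap_{n\in FS({\bf n})}T^{-n}A$. Put $G=A\setminus\bigcup_{{\bf n}\in\mathcal{P}_d}\bigcup_{k\in\Z}T^{-k}A_{\bf n}$; as a countable union of null sets is removed, $\mu(G)=\mu(A)>0$, and for every $y\in G$, every $k\in\Z$ and every ${\bf n}\in\mathcal{P}_d$ one has $T^ky\notin A_{\bf n}$. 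By the pointwise ergodic theorem, for $\mu$-a.e.\ $y$ the density of $\{k:T^ky\in G\}$ exists and equals $\E(1_G\mid\I)(y)$, so $\int_G D^*(\{k:T^ky\in G\})\,d\mu\ge \int \E(1_G\mid\I)^2\,d\mu\ge\mu(G)^2>0$; moreover the set of $y$ admitting some $k$ with $T^ky\in A\setminus G$ lies in the null set $\bigcup_{k\in\Z}T^{-k}(A\setminus G)$. Choosing $x\in G$ outside this null set and with $D^*(\{k:T^kx\in G\})>0$, and setting $F=\{k\in\Z:T^kx\in A\}$, we obtain $F=\{k:T^kx\in G\}$, hence $BD^*(F)\ge D^*(F)>0$ and $0\in F$. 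Finally, $F$ defeats intersectiveness of order $d$: for any ${\bf n}\in\mathcal{P}_d$ and $a\in F$ we have $T^ax\in G$, so $T^ax\notin A_{\bf n}$, and since $T^ax\in A$ there is $n\in FS({\bf n})$ with $T^{a+n}x\notin A$, i.e.\ $a+n\notin F$; thus no ${\bf n}\in\mathcal{P}_d$ and $a\in F$ satisfy $a+FS({\bf n})\subset F$, contradicting that $P$ is intersective of order $d$ applied to $F\in\F_{pubd}$.

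The main obstacle is the converse direction, and specifically the extraction of a single point $x$ that simultaneously (i) lies in the ``good'' set $G$, (ii) has a return set to $A$ of positive upper density, and (iii) sends every one of its $A$-returns back into $G$; property (ii) is where one needs the ergodic theorem (to keep $F$ of positive Banach density), while property (iii) is where one needs the countable-union null-set argument (to keep all returns inside $G$ so that the defect of $A_{\bf n}$ propagates along the orbit). The forward direction is routine once the correspondence principle is invoked, exactly as in the classical $d=1$ case recalled before the definition.
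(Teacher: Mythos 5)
Your proof is correct. The forward direction (Poincar\'e of order $d$ implies intersective of order $d$) is exactly the paper's argument: apply the Furstenberg correspondence principle to $F\in\F_{pubd}$ to get a system and a set $A$ with $\mu(A)=BD^*(F)>0$ and $BD^*\big(\bigcap_{n\in\alpha}(F-n)\big)\ge\mu\big(\bigcap_{n\in\alpha}T^{-n}A\big)$ for all finite $\alpha$, and then feed in the definition of a Poincar\'e sequence of order $d$. For the converse the paper again invokes the correspondence principle, but in the reverse (system-to-set) direction: it asserts the existence of $F\subset\Z$ with $d(F)\ge\mu(A)$ such that nonemptiness of $\bigcap_{n\in\alpha}(F-n)$ forces $\mu\big(\bigcap_{n\in\alpha}T^{-n}A\big)>0$, and applies intersectiveness of $P$ to this $F$. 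Your contrapositive argument proves precisely this reverse transfer by hand: the return-time set $F=\{k:T^kx\in A\}$ of a point $x$ chosen outside the countable union of null sets $\bigcup_{\bf n}\bigcup_k T^{-k}A_{\bf n}$ and $\bigcup_k T^{-k}(A\setminus G)$, and generic enough (via the pointwise ergodic theorem plus Cauchy--Schwarz) to satisfy $D^*(F)=\E(1_G\mid\I)(x)>0$, is exactly a witness for that assertion, and your verification that no $a\in F$ and ${\bf n}$ with $FS(\{n_i\}_{i=1}^d)\subset P$ satisfy $a+FS(\{n_i\}_{i=1}^d)\subset F$ is sound. So the two proofs agree in substance; yours is more self-contained in the harder direction (no black-box citation of the correspondence principle, at the cost of using the Birkhoff ergodic theorem and a null-set removal), while the paper's is shorter because it cites the transfer in both directions.
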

\begin{proof}
Assume that $P$ is intersective of order $d$. Let $(X,\mathcal{B},
\mu,T)$ be a measure preserving system and $A\in\mathcal{B}$ with
$\mu(A)>0$. By the Furstenberg corresponding principle, there exists
$F\subset \mathbb{Z}$ such that $d(F)\ge \mu(A)$ and
\begin{equation}\label{F1}
\{\alpha\in \F(\mathbb{Z}):\bigcap_{n\in \alpha}
(F-n)\not=\emptyset\}\subseteq \{\alpha\in
\F(\mathbb{Z}):\mu(\bigcap_{n\in \alpha} T^{-n}A)>0\},
\end{equation}
where $\F(\mathbb{Z})$ denote the collection of finite non-empty
subsets of $\mathbb{Z}$. Since $P$ is intersective of order $d$,
there are $n_1,\ldots,n_d$ with $FS(\{n_i\}_{i=1}^d)\subset P$ and
$a\in F$ with $a+FS(\{n_i\}_{i+1}^d)\subset F$, i.e. $F\cap \bigcap
\limits_{n\in FS(\{n_i\})} (F-n)\not=\emptyset$. By (\ref{F1}) $P\in
\F_{P_d}$.

Now assume that $P\in\F_{P_d}$ and $F\in \F_{pubd}$. Then by the
Furstenberg corresponding principle, there are a measure preserving
system $(X,\mathcal{B}, \mu,T)$ and $A\in\mathcal{B}$ such that
$\mu(A)=BD^*(F)>0$ and
\begin{equation}\label{F2}
BD^*(\bigcap_{n\in \alpha} (F-n))\ge \mu(\bigcap_{n\in \alpha}
T^{-n}A)
\end{equation}
for all $\alpha\in \F(\Z)$. Since $P\in\F_{P_d}$, there are
$n_1,\ldots,n_d$ with $FS(\{n_i\})\subset P$ and $\mu(A\cap \bigcap
\limits_{n\in FS(\{q_i\}_{i=1}^d)}T^{-n}A)>0.$ This implies $F\cap
\bigcap \limits_{n\in FS(\{n_i\})} (F-n)\not=\emptyset$ by
(\ref{F2}).
\end{proof}


\end{document}